\def\namedlabel#1#2{\begingroup
    #2%
    \def\@currentlabel{#2}%
    \phantomsection\label{#1}\endgroup}
\newtheorem{proposition}{Proposition}[section]
\newtheorem{definition}[proposition]{Definition}
\newtheorem{lemma}[proposition]{Lemma}
\newtheorem{theorem}[proposition]{Theorem}
\newtheorem{remark}[proposition]{Remark}
\newtheorem{example}[proposition]{Example}
\newtheorem{corollary}[proposition]{Corollary}
\newcommand{\mm}{\mathfrak{M}}
\newcommand{\ot}{\otimes}
\title{{\bf Noncommutative differential geometry on crossed product algebras}}
\author{{\large Andrea Sciandra}\footnote{andrea.sciandra@unito.it}~$^{,1}$}
\author{{\large Thomas Weber}\footnote{thomasmartin.weber@unibo.it}~$^{,2,3}$}
\affil{
\centerline{\sl
{$^1$Dipartimento di Matematica "Giuseppe Peano", Universit\`a degli Studi di Torino}}

\centerline{\sl  Via Carlo Alberto 10, 10123 Torino, Italy}

\vskip 0.4 cm

\centerline{\sl
{$^2$Dipartimento di Matematica, Universit\`a di Bologna}}

\centerline{\sl  Piazza di Porta S. Donato 5, I-40126 Bologna, Italy}

\vskip 0.4 cm

\centerline{$^3${\sl Istituto Nazionale di Fisica Nucleare, Sezione di Bologna}}

\centerline{\sl  Via Berti Pichat 6/2, I-40127 Bologna, Italy}
}
\date{}
\begin{document}

\maketitle

\begin{abstract}
We provide a differential structure on arbitrary cleft extensions $B:=A^{\mathrm{co}H}\subseteq A$ for an $H$-comodule algebra $A$. This is achieved by constructing a covariant calculus on the corresponding crossed product algebra $B\#_\sigma H$ from the data of a bicovariant calculus on the structure Hopf algebra $H$ and a calculus on the base algebra $B$, which is compatible with the $2$-cocycle and measure of the crossed product. The result is a quantum principal bundle with canonical strong connection and we describe the induced bimodule covariant derivatives on associated bundles of the crossed product. All results specialize to trivial extensions and smash product algebras $B\#H$ and we give a characterization of the smash product calculus in terms of the differentials of the cleaving map $j\colon H\to A$ and the inclusion $B\hookrightarrow A$. The construction is exemplified for pointed Hopf algebras. In particular, the case of Radford Hopf algebras $H_{(r,n,q)}$ is spelled out in detail.
\end{abstract}

\tableofcontents

\section{Introduction}

Without doubt, principal bundles play a decisive role in differential geometry, topology and gauge theory. An ingenious generalization to quantum principal bundles on noncommutative algebras was proposed by Schneider \cite{Schneider} and Brzezi\'nski--Majid \cite{BrMa}. The central idea is to employ Hopf--Galois extensions \cite{KreTak}, where an $H$-comodule algebra $A$ encodes the total space, the Hopf algebra $H$ replaces the structure group and the base is given by the subalgebra $B=A^{\mathrm{co}H}\subseteq A$ of $H$-coinvariant elements. Principality of such a quantum bundle is then characterized by the invertibility of the Hopf--Galois map $A\otimes_BA\to A\otimes H$. In order to ensure an appropriate geometric behaviour one commonly assumes faithful flatness of the Hopf--Galois extension, thus resulting in a categorical equivalence of $B$-modules and $A$-$H$-Hopf modules \cite{Schneider}. Throughout the last decades the notion of quantum principal bundle has been featured in numerous articles \cite{BrZi,Hajac96,Krutov}, just to name a few, establishing it as common knowledge in noncommutative differential geometry.

A class of particular interest is given by cleft extensions, i.e., $H$-comodule algebras $A$ which admit a cleaving map $j\colon H\to A$. The latter is a convolution invertible comodule map and it provides an explicit inverse of the Hopf--Galois map. It was shown by Doi--Takeuchi \cite{DoiTak} that cleft extensions are precisely the Hopf--Galois extensions with the normal basis property, meaning that $A$ is isomorphic (as comodule algebra) to the crossed product algebra $B\#_\sigma H$ with $B=A^{\mathrm{co}H}$ and, conversely, every crossed product algebra gives rise to a cleft extension. A crossed product algebra is determined by a $2$-cocycle $\sigma\colon H\otimes H\to B$ and a measure (this is a weak version of an action) of $H$ on a $\sigma$-twisted $H$-module algebra $B$. In case of the trivial $2$-cocycle we recover smash product algebras $B\#H$, which correspond to trivial extensions $B=A^{\mathrm{co}H}\subseteq A$. The isomorphism $A\cong B\#_\sigma H$ provides a certain triviality of the bundle, in the sense that the total space algebra can be expressed through the base algebra and structure Hopf algebra, however, we would like to stress that the algebra structure of $B\#_\sigma H$ is nontrivial, as it involves the measure and $2$-cocycle. It follows that cleft extensions are automatically faithfully flat if the antipode of the Hopf algebra is invertible and one considers vector spaces \cite{Hajac}.

While in classical differential geometry there is a canonical differential structure on every manifold, namely the de Rham calculus, in noncommutative geometry there are typically many differential calculi on a given noncommutative algebra. For this reason, a noncommutative differential structure requires additional information, for example a spectral triple in the framework of noncommutative geometry \cite{Connes}, or an ideal in the kernel of the counit in case of a covariant calculus on a Hopf algebra \cite{Wo}. The main contribution of this paper is to provide a canonical covariant calculus on arbitrary cleft extensions $B=A^{\mathrm{co}H}\subseteq A$ (or, equivalently, on arbitrary crossed product algebras $B\#_\sigma H$) from a bicovariant calculus $(\Omega^1(H),\mathrm{d}_H)$ on the structure Hopf algebra $H$ and a calculus $(\Omega^1(B),\mathrm{d}_B)$ on the base algebra $B$, which is compatible with the $2$-cocycle and measure of the crossed product. We call it the crossed product calculus. The construction (Section \ref{Sec3.1}) is in the spirit of the normal basis property, where the $1$-forms $\Omega^1(A)$ on $A$ are build as the direct sum $(\Omega^1(B)\otimes H)\oplus(B\otimes\Omega^1(H))$ and the $A$-bimodule actions mimic the crossed product multiplication; the differential is the sum $\mathrm{d}_A=\mathrm{d}_B+\mathrm{d}_H$. In a similar way, higher order forms are constructed (Section \ref{Sec3.3}) and we prove that the crossed product calculus forms a quantum principal bundle (Section \ref{Sec3.4}) in the sense of \cite{BrMa}, meaning that there is a short exact sequence
$$
0\longrightarrow\Omega^1(B)\otimes H\hookrightarrow\Omega^1(A)\overset{\mathrm{ver}}\longrightarrow A\otimes{}^{\mathrm{co}H}\Omega^1(H)\longrightarrow0,
$$
where $\mathrm{ver}$ denotes the vertical map and ${}^{\mathrm{co}H}\Omega^1(H)$ are the left $H$-coinvariant $1$-forms on $H$. This allows us to identify $\Omega^1(B)\otimes H$ as the horizontal forms and $B\otimes\Omega^1(H)$ as the vertical forms of the quantum principal bundle. It follows that the quantum principal bundle is strong (i.e., $A\Omega^1(B)=\Omega^1(B)A$) and regular in the sense of \cite{BM} (it descends from the universal quantum principal bundle with strong connection). We describe the induced bimodule covariant derivatives on all associated bundles. As an application, we construct higher order vertical maps and exact sequences for the crossed product calculus and show that the noncommutative de Rham cohomology $\mathrm{H}_\mathrm{dR}^\bullet(A)$ can be computed from the tensor product of cohomologies $\mathrm{H}_\mathrm{dR}^\bullet(B)\otimes\mathrm{H}_\mathrm{dR}^\bullet(H)$ via the Leray--Serre spectral sequence. We further give a bijection between connections and connection $1$-forms on the crossed product calculus (Section \ref{Sec3.5}), using the notion of fundamental vector field.

For the convenience of the reader we include preliminaries (Section \ref{Sec2}), recalling the notions of Hopf algebra theory and noncommutative differential geometry which are relevant for the crossed product calculus, rendering the presentation self-contained. To further justify the importance of the crossed product calculus we conclude the paper with explicit examples (Section \ref{Sec4}) arising from pointed Hopf algebras, particularly from the Radford Hopf algebras $H_{(r,n,q)}$. We would like to stress that the resulting calculus on a pointed Hopf algebra $H$ is covariant with respect to the group algebra of $G/N$, where $G$ and $N$ are the groups of group-like elements of $H$ and of the indecomposable component of $H$ containing the unit, respectively. In particular, it is not covered by the Woronowicz classification \cite{Wo}.

It is important to note that part of this paper is inspired by the work \cite{PS} of Pflaum--Schauenburg. They construct a canonical covariant calculus on trivial extensions $B=A^{\mathrm{co}H}\subseteq A$ or, equivalently, on smash product algebras $B\#H$. The results of this paper generalize the findings in \cite{PS}, in the sense that we consider crossed product algebras $B\#_\sigma H$, with arbitrary $2$-cocycles $\sigma\colon H\otimes H\to B$. The generalization is not straightforward, since, for example, the necessary compatibility of $\sigma$ with the differential of $B$ which we provide is automatic in case of the trivial $2$-cocycle. We further give a classification of the smash product calculus in Section \ref{Sec:ClasSmash}, in terms of the differentials of the cleaving and inclusion maps. There are interesting classes of cleft extensions which are not trivial (see Example \ref{ex:torus} and Section \ref{Sec4}), in particular in the context of pointed Hopf algebras, which further justify the relevance of this generalization. Let us also point out the reference \cite{Ryan}, where differential calculi on double smash products are considered. A generalization to double crossed products might be feasible and we leave this as a future project. To avoid confusion let us clarify that the authors of \cite{Ryan} use the name cross product instead of smash product,
while we follow the convention of \cite{Mon}, where crossed products allow for non-trivial $2$-cocycles and measures.

\section{Preliminaries}\label{Sec2}
In this section we recall some basic notions and results concerning (co)action compatible modules, crossed product algebras and cleft extensions, noncommutative differential calculi, as well as quantum principal bundles and their connections.
The reader who is familiar with these concepts can proceed to Section \ref{Sec3}, which contains the first original contribution of this paper.

We fix a field $\Bbbk$ and all vector spaces are understood to be $\Bbbk$-vector spaces unless otherwise specified. By a linear map we mean a $\Bbbk$-linear map and the unadorned tensor product $\ot$ is the one of vector spaces. All linear maps whose domain is a tensor product will usually be defined on generators and understood to be extended by linearity. Algebras over $\Bbbk$ will be associative and unital. Our general references for Hopf algebra theory are \cite{Sw} and \cite{Mon}, where one can find the notions of coalgebra, bialgebra, Hopf algebra and (co)modules. We denote the multiplication and the unit of an algebra by $m$ and $u$, while the comultiplication and the counit of a coalgebra are denoted by $\Delta$ and $\varepsilon$; $S$ will denote the antipode of a Hopf algebra, which we always assume invertible in the following. 
Given a coalgebra $C$ we will use Sweedler's notation for calculations involving the comultiplication, meaning we shall write $\Delta(c)=c_{1}\otimes c_{2}$ for any $c\in C$ (with the usual summation convention, where $c_{1}\otimes c_{2}$ stands for the finite sum $\sum_i{c^i_{1}\otimes c^i_{2}}$). 
Given a right (resp. left) $C$-comodule $M$ we use Sweedler’s like notation for calculations involving the coaction $\rho\colon M\to M\otimes C$ (resp. $\lambda\colon M\to C\otimes M$), namely we omit finite summation and write $\rho(m)=m_{0}\otimes m_{1}$ (resp. $\lambda(m)=m_{-1}\otimes m_{0}$), for any $m\in M$.

\subsubsection*{Equivariant and covariant modules}

For $H$ a bialgebra it is well-known that the category of left $H$-modules $_{H}\mm$ is a monoidal category:
for every $M$ and $N$ in $_{H}\mm$ the tensor product $M\otimes N$ has a left $H$-action given by $h\cdot(m\otimes n)=h_{1}\cdot m\otimes h_{2}\cdot n$ for every $h\in H$, $m\in M$ and $n\in N$, where we denote the actions on $M$ and $N$ by the same symbol. The unit object is the base field $\Bbbk$ with left $H$-action given by $h\cdot k=\varepsilon(h)k$ for every $h\in H$ and $k\in\Bbbk$. Similarly, one defines the monoidal categories $\mm_{H}$ of right $H$-modules and $_{H}\mm_{H}$ of $H$-bimodules.
Dually, the category of right $H$-comodules $\mm^{H}$ is monoidal: for every $M$ and $N$ in $\mm^{H}$ the tensor product $M\otimes N$ has a right $H$-coaction given by $\rho_{M\otimes N}(m\otimes n)=m_{0}\otimes n_{0}\otimes m_{1}n_{1}$ for every $m\in M$ and $n\in N$. The unit object is $\Bbbk$ with right $H$-coaction $\rho_{\Bbbk}(k)=k\otimes1_{H}$ for every $k\in\Bbbk$. Similarly, one defines the monoidal categories $^{H}\mm$ of left $H$-comodules and $^{H}\mm^{H}$ of $H$-bicomodules. \medskip

A \textit{left $H$-module algebra} is an algebra object in the category $_{H}\mm$. Explicitly, an algebra $(A,m,u)$ is a left $H$-module algebra if $A$ is a left $H$-module such that $m:A\otimes A\to A$ and $u:\Bbbk\to A$ are left $H$-linear maps, i.e., for all $h\in H$ and $a,b\in A$ we have
\[
h\cdot(ab)=(h_{1}\cdot a)(h_{2}\cdot b)\qquad \text{and}\qquad h\cdot1_{A}=\varepsilon(h)1_{A}.
\]
Morphisms of left $H$-module algebras are left $H$-linear algebra morphisms. Similarly, right $H$-module algebras and $H$-bimodule algebras are defined.

A \textit{right $H$-comodule algebra} is an algebra object in the category $\mm^{H}$. Explicitly, an algebra $(A,m,u)$ is a right $H$-comodule algebra if $A$ is a right $H$-comodule such that $m:A\otimes A\to A$ and $u:\Bbbk\to A$ are right $H$-colinear maps, i.e., for all $a,b\in A$ we have
\[
\rho(ab)=a_{0}b_{0}\otimes a_{1}b_{1}\qquad \text{and}\qquad \rho(1_{A})=1_{A}\otimes1_{H}.
\]
Morphisms of right $H$-comodule algebras are right $H$-colinear algebra morphisms.
Similarly, left $H$-comodule algebras and $H$-bicomodule algebras are defined.

Given a left $H$-module algebra $A$, we can consider the category of left (right, bi) $A$-modules in $_{H}\mm$, denoted by $_{A}(_{H}\mm)$ ($(_{H}\mm)_{A}$, $_{A}(_{H}\mm)_{A}$).
We refer to an object $M$ in the category $_{A}(_{H}\mm)$ as a \textit{left $H$-equivariant left $A$-module}. Explicitly, $M$ is a left $A$-module $M$ with action $A\otimes M\ni a\otimes m\mapsto am\in M$ which is also a left $H$-module with action $H\otimes M\ni h\otimes m\mapsto h\cdot m\in M$ such that
\begin{equation*}
    h\cdot(am)=(h_{1}\cdot a)(h_{2}\cdot m)\qquad \text{for all}\ h\in H,\ a\in A\ \text{and}\ m\in M.
\end{equation*}
A morphism of left $H$-equivariant left $A$-modules is a left $H$-linear and left $A$-linear map. Similarly, left $H$-equivariant right $A$-modules and $A$-bimodules are defined.

Analogously, if $A$ is a right $H$-comodule algebra, we can consider the category of left (right, bi) $A$-modules in the category $\mm^{H}$, denoted by $_{A}(\mm^{H})$ ($(\mm^{H})_{A}$, $_{A}(\mm^{H})_{A}$), even removing the parentheses.
An object $M$ in $_{A}(\mm^{H})$ is called a \textit{right $H$-covariant left $A$-module}. Explicitly, $M$ is a left $A$-module with action $A\otimes M\ni a\otimes m\mapsto am\in M$ which is also a right $H$-comodule with coaction $\rho:M\to M\otimes H$ such that
\begin{equation*}
\rho(am)=a_{0}m_{0}\otimes a_{1}m_{1}\qquad \text{for all}\ a\in A\ \text{and}\ m\in M.
\end{equation*}
Morphisms of right $H$-covariant left $A$-modules are right $H$-colinear and left $A$-linear maps.  Similarly, right $H$-covariant right $A$-modules and $A$-bimodules are defined.

For more information about equivariant and covariant modules we refer the reader to \cite{SchYD}.

\subsubsection*{Crossed product algebras}

In this subsection we present crossed product algebras and their correspondence with cleft extensions, see \cite{DoiTak} or \cite[Chapter 7]{Mon}. Let $H$ be a Hopf algebra and $B$ an algebra. We say that $H$ \textit{measures} $B$ if there is a linear map $H\otimes B\to B$, $h\otimes b\mapsto h\cdot b$, such that $h\cdot1_{B}=\varepsilon(h)1_{B}$ and $h\cdot(bb')= (h_{1}\cdot b)(h_{2}\cdot b')$ for all $h\in H$ and $b,b'\in B$. Let further $\sigma:H\otimes H\to B$ be a linear map which is convolution invertible, i.e., such that there exists a linear map $\sigma^{-1}:H\otimes H\to B$ such that $\sigma(h_{1}\otimes h'_{1})\sigma^{-1}(h_{2}\otimes h'_{2})=\varepsilon(hh')1_{B}=\sigma^{-1}(h_{1}\otimes h'_{1})\sigma(h_{2}\otimes h'_{2})$ for every $h,h'\in H$. Then the \textit{crossed product} $B\#_{\sigma}H$ of $B$ with $H$ is the vector space $B\otimes H$ endowed with the linear map
\begin{equation}\label{crossed}
    (B\otimes H)\otimes(B\otimes H)\to B\otimes H,\qquad (b\otimes h)\otimes(b'\otimes h')\mapsto b(h_{1}\cdot b')\sigma(h_{2}\otimes h'_{1})\otimes h_{3}h'_{2}
\end{equation}
for all $b,b'\in B$ and $h,h'\in H$.
It is well-known (see e.g. \cite[Lemma 7.1.2]{Mon}) that the crossed product $B\#_{\sigma}H$ is an associative algebra with multiplication given by the assignment \eqref{crossed} and unit $1_{B}\otimes1_{H}$ if and only if the following two conditions are satisfied: 
\begin{enumerate}
\item[i.)] $B$ is a \textit{$\sigma$-twisted left $H$-module}, i.e., $1_{H}\cdot b=b$ for all $b\in B$ and 
\begin{equation*}
    h\cdot(h'\cdot b)=\sigma(h_{1}\otimes h'_{1})(h_{2}h'_{2}\cdot b)\sigma^{-1}(h_{3}\otimes h'_{3})
\end{equation*}
for all $h,h'\in H$ and $b\in B$.

\item[ii.)] $\sigma$ is a \textit{2-cocycle with values in $B$}, that is, $\sigma(h\otimes1_{H})=\varepsilon(h)1_{B}=\sigma(1_{H}\otimes h)$ for all $h\in H$ and 
 \begin{equation*}
(h_{1}\cdot\sigma(h'_{1}\otimes h''_{1}))\sigma(h_{2}\otimes h'_{2}h''_{2})=\sigma(h_{1}\otimes h'_{1})\sigma(h_{2}h'_{2}\otimes h'')
\end{equation*}
for all $h,h',h''\in H$.
\end{enumerate}
We refer to an algebra $B$ satisfying i.) and ii.) as a \textit{$\sigma$-twisted left $H$-module algebra}. 
It is straightforward to prove that, in case the above assumptions are satisfied, $B\#_\sigma H$ is a right $H$-comodule algebra with right $H$-coaction $\rho:=\mathrm{Id}_{B}\otimes\Delta\colon B\#_\sigma H\to(B\#_\sigma H)\otimes H$. Note that $(B\#_{\sigma}H)^{\mathrm{co}H}=B\otimes\Bbbk1_{H}\cong B$. In the following we refer to $B\#_\sigma H$ with the associative product \eqref{crossed} as the \textit{crossed product algebra}. The product will be denoted by
$$
(b\otimes h)\cdot(b'\otimes h')=b(h_{1}\cdot b')\sigma(h_{2}\otimes h'_{1})\otimes h_{3}h'_{2}
$$
for $b\otimes h,b'\otimes h'\in B\#_\sigma H$.

A particular class of crossed product algebras is obtained from left $H$-module algebras $B$ with trivial 2-cocycle $h\otimes h'\mapsto\varepsilon(hh')1_{B}$. The corresponding product is $(b\otimes h)\cdot(b'\otimes h')=b(h_{1}\cdot b')\otimes h_{2}h'$ and one obtains $B\#H$, the \textit{smash product algebra} of $B$ and $H$.

\medskip

We recall that crossed product algebras classify cleft extensions, with smash product algebras corresponding to trivial extensions.
Let $A$ be a right $H$-comodule algebra and consider $B:=A^{\mathrm{co}H}$ the subalgebra of right $H$-coinvariant elements.  
We call $B\subseteq A$ a \textit{cleft extension} if there is a convolution invertible morphism $j:H\to A$ of right $H$-comodules, the so-called \textit{cleaving map}. If $j:H\to A$ is a morphism of right $H$-comodule algebras we call $B\subseteq A$ a \textit{trivial extension}.
By definition, every trivial extension is a cleft extension. The convolution inverse of a cleaving map $j\colon H\to A$ is commonly denoted by $j^{-1}\colon H\to A$. The cleaving map is injective and we can assume it to be unital without loss of generality.

Any crossed product algebra $B\#_{\sigma}H$ is a cleft extension $B\subseteq B\#_{\sigma}H$ with cleaving map $j:H\to B\#_{\sigma}H,\ h\mapsto1_{B}\otimes h$. Note that $j^{-1}:h\mapsto\sigma^{-1}(S(h_{2})\otimes h_{3})\otimes S(h_{1})$. Conversely, given a cleft extension $B\subseteq A$ with cleaving map $j:H\to A$, define a measure on $B$
\[
\cdot:H\otimes B\to B,\ h\otimes b\mapsto h\cdot b:=j(h_{1})bj^{-1}(h_{2}) 
\]
and a 2-cocycle with values in $B$
\[
\sigma:H\otimes H\to B,\ h\otimes h'\mapsto\sigma(h\otimes h'):=j(h_{1})j(h'_{1})j^{-1}(h_{2}h'_{2}). 
\]
Then $A\cong B\#_{\sigma}H$ are isomorphic as right $H$-comodule algebras via $\theta:a\mapsto a_{0}j^{-1}(a_{1})\otimes a_{2}$, with inverse $\theta^{-1}:b\otimes h\mapsto bj(h)$. A cleft extension is a trivial extension if and only if the corresponding crossed product algebra is a smash product algebra.
We refer to \cite[Theorem 7.2.2]{Mon} for a proof of this correspondence (see also \cite[Chapter 6]{Ma95}).

\subsubsection*{Noncommutative differential calculi}

In this subsection we recall differential structures on noncommutative algebras, together with some well-known classification results. The original source of this material is \cite{Wo} and we refer the reader to \cite[Chapter 1 \& 2]{BM}, \cite{SchDC} for more details. 

A map $f\colon V^\bullet\to W^\bullet$ between $\mathbb{N}$-graded vector spaces $V^\bullet=\bigoplus_{n\in\mathbb{N}}V^n$, $W^\bullet=\bigoplus_{n\in\mathbb{N}}W^n$ is of degree $k\in\mathbb{Z}$ if $f(V^n)\subseteq W^{n+k}$ for all $n\in\mathbb{N}$, where we set $W^{-n}=\{0\}$ by default. The tensor product $V^\bullet\otimes W^\bullet$ of $\mathbb{N}$-graded vector spaces $V^\bullet$, $W^\bullet$ becomes an $\mathbb{N}$-graded vector space with $(V^\bullet\otimes W^\bullet)^n:=\bigoplus_{k=0}^nV^k\otimes W^{n-k}$ for all $n\in\mathbb{N}$.
Recall that a \textit{differential graded algebra} $(A^{\bullet},\wedge,\mathrm{d})$ is an $\mathbb{N}$-graded vector space $A^{\bullet}=\bigoplus_{n\in\mathbb{N}}{A^{n}}$, endowed with a degree $0$ map $\wedge\colon A^\bullet\otimes A^\bullet\to A^\bullet$, the \textit{wedge product}, a degree $1$ map $\mathrm{d}\colon A^\bullet\to A^{\bullet+1}$, the \textit{differential}, and an element $1\in A^0$, such that $\wedge$ is associative and unital with respect to $1$, and such that $\mathrm{d}^{2}:=\mathrm{d}\circ\mathrm{d}=0$, as well as the \textit{Leibniz rule}
\begin{equation*}
    \mathrm{d}(\omega\wedge\eta)=\mathrm{d}\omega\wedge\eta+(-1)^{n}\omega\wedge\mathrm{d}\eta
\end{equation*}
hold for all $\omega\in A^n$ and $\eta\in A^\bullet$.
A morphism $\Phi:(A^{\bullet},\wedge,\mathrm{d})\to(A'^{\bullet},\wedge',\mathrm{d}')$ between differential graded algebras is a linear map $\Phi:A^{\bullet}\to A'^{\bullet}$ of degree 0 such that $\Phi(\omega\wedge\eta)=\Phi(\omega)\wedge'\Phi(\eta)$ and $\Phi\circ\mathrm{d}=\mathrm{d'}\circ\Phi$ for all $\omega,\eta\in A^{\bullet}$. Furthermore, $\Phi$ is an isomorphism if it is invertible. \medskip

A \textit{differential calculus} (DC) on an algebra $A$ is a differential graded algebra $(\Omega^{\bullet}(A),\wedge,\mathrm{d})$ with $\Omega^{0}(A)=A$ such that, for all $n\geq1$, the vector space $\Omega^{n}(A)$ is spanned by elements of the form $a^{0}\mathrm{d}a^{1}\wedge\cdot\cdot\cdot\wedge\mathrm{d}a^{n}$ with $a^{i}\in A$. A morphism $\Phi$ from a DC $(\Omega^{\bullet}(A),\wedge,\mathrm{d})$ on an algebra $A$ to a DC $(\Omega^{\bullet}(A'),\wedge',\mathrm{d}')$ on an algebra $A'$ is a morphism of the underlying differential graded algebras. 

Given $(\Omega^{\bullet}(A),\wedge,\mathrm{d})$ and $(\Omega^{\bullet}(A'),\wedge',\mathrm{d}')$ DCi on algebras $A$ and $A'$, respectively, we call an algebra map $f:A\to A'$ \textit{differentiable} if there exists a morphism $\Omega^{\bullet}(f):\Omega^{\bullet}(A)\to\Omega^{\bullet}(A')$ of differential graded algebras with $\Omega^{0}(f)=f$. Thus, $\Omega^{\bullet}(f):\bigoplus_{n\geq0}{\Omega^{n}(A)}\to\bigoplus_{n\geq0}{\Omega^{n}(A')}$ is an algebra map defined by a collection of linear maps $\Omega^{n}(f):\Omega^{n}(A)\to\Omega^{n}(A')$ such that $\mathrm{d}'\circ\Omega^{n}(f)=\Omega^{n+1}(f)\circ\mathrm{d}$. Observe that $\Omega^{\bullet}(f)$ is completely determined by $f$ since $\Omega^n(f)(a^{0}\mathrm{d}a^{1}\wedge\cdot\cdot\cdot\wedge\mathrm{d}a^{n})=f(a^{0})\mathrm{d}'(f(a^{1}))\wedge\cdot\cdot\cdot\wedge\mathrm{d}'(f(a^{n}))$ for all $a^i\in A$.
\medskip

Let $H$ be a Hopf algebra and
recall that an $\mathbb{N}$-graded vector space $V=\bigoplus_{n\in\mathbb{N}}{V^{n}}$ is a 
\textit{graded right $H$-comodule} if there is a right $H$-coaction $\rho:V\to V\otimes H$ such that $\rho(V^{n})\subseteq V^{n}\otimes H$ for all $n\geq0$. 
Thus, for any $n\geq0$, the vector space $V^{n}$ is a right $H$-comodule via the restriction and corestriction $\rho^{n}:=\rho|_{V^{n}}:V^{n}\to V^{n}\otimes H$ and we can write $\rho=\bigoplus_{n\in\mathbb{N}}{\rho^{n}}$. A linear map $\phi:V\to W$ of degree $k$ between graded right $H$-comodules $(V,\rho_{V})$ and $(W,\rho_{W})$ is said to be right $H$-colinear if $\rho_{W}^{n+k}\circ\phi^{n}=(\phi^{n}\otimes\mathrm{Id}_{H})\circ\rho_{V}^{n}$ for all $n\geq0$, where $\phi^{n}:=\phi|_{V^{n}}:V^{n}\to W^{n+k}$. 
A \textit{graded right $H$-comodule algebra} is
a graded algebra which is also a graded right $H$-comodule and the algebra structure is compatible with the right $H$-coaction in the usual sense. 
A differential calculus $(\Omega^{\bullet}(A),\wedge,\mathrm{d})$ on a right $H$-comodule algebra $(A,\rho_{A})$ is called \textit{right $H$-covariant} if $(\Omega^{\bullet}(A),\wedge)$ is a graded right $H$-comodule algebra with graded coaction $\rho:\Omega^{\bullet}(A)\to\Omega^{\bullet}(A)\otimes H$ such that $\rho^{0}=\rho_{A}$ and the differential $\mathrm{d}:\Omega^{\bullet}(A)\to\Omega^{\bullet}(A)$ is right $H$-colinear.
Similarly, left $H$-covariant and $H$-bicovariant DC are defined on left $H$-comodule algebras and $H$-bicomodule algebras, respectively. In case of the comodule algebra $H$ with coaction given by the coproduct we call an $H$-covariant calculus covariant, i.e., we omit the reference to $H$. We also use the expression $(\Omega^\bullet,\mathrm{d})$ for a DC on an algebra $A$ when the latter is clear from the context.
\medskip

On every algebra $A$ one can construct the universal DC $(\Omega^\bullet_u,\mathrm{d}_u)$ as the common kernel of adjacent tensor product multiplication \cite[Theorem 1.33]{BM}. If $A$ is a right $H$-comodule algebra, then $(\Omega^\bullet_u,\mathrm{d}_u)$ is right $H$-covariant. Moreover, every DC on $A$ is a quotient of $(\Omega^\bullet_u,\mathrm{d}_u)$, see e.g. \cite[Appendix B]{Du}. The latter means that, if $(\Omega^\bullet,\mathrm{d})$ is an arbitrary DC on $A$, there is a surjective morphism $(\Omega^\bullet_u,\mathrm{d}_u)\to(\Omega^\bullet,\mathrm{d})$ of differential graded algebras. Thus, the universal DC on $A$ is an initial object for the category of differential calculi over $A$, while the trivial calculus on $A$ is a terminal object.

Given a DC $(\Omega^\bullet,\mathrm{d})$ on $A$ and a DC $(\Omega'^{\bullet},\mathrm{d}')$ on another algebra $A'$ there is a DC $(\Omega^\bullet_\otimes,\mathrm{d}_\otimes)$ on the tensor product algebra $A\otimes A'$, called the \textit{tensor product calculus}. The graded module $\Omega_\otimes^n:=\bigoplus_{k=0}^n\Omega^k\otimes\Omega'^{n-k}$ becomes a graded algebra with multiplication determined on homogeneous elements $\omega,\eta\in\Omega^\bullet$ and $\omega',\eta'\in\Omega'^{\bullet}$ by 
$$
(\omega\otimes\omega')\wedge_\otimes(\eta\otimes\eta')
:=(-1)^{|\omega'||\eta|}(\omega\wedge\eta)\otimes(\omega'\wedge'\eta').
$$
The differential on $\Omega^\bullet_\otimes$ is given on homogeneous elements by
$$
\mathrm{d}_\otimes(\omega\otimes\omega')
:=\mathrm{d}\omega\otimes\omega'+(-1)^{|\omega|}\omega\otimes\mathrm{d}'\omega'.
$$
This construction implies that DCi form a monoidal category with the monoidal unit given by the trivial calculus on $\Bbbk$.
\medskip

In many situations the knowledge of $(\Omega^1,\mathrm{d}|_A)$ is sufficient to describe the full differential calculus on $A$, e.g. via "maximal prolongation" (see \cite[Lemma 1.32]{BM}) or via a braided exterior algebra construction (see \cite[Theorem 4.1]{Wo}). This motivates the importance of such "first order" DC data, defined as follows: a \textit{first order differential calculus} (FODC) on an algebra $A$ is a tuple $(\Omega^1(A),\mathrm{d}_{A})$, where:
\begin{enumerate}
\item[i.)] $\Omega^1(A)$ is an $A$-bimodule;

\item[ii.)] $\mathrm{d}_{A}:A\to\Omega^1(A)$ is a linear map which satisfies the Leibniz rule $\mathrm{d}_{A}(ab)=(\mathrm{d}_{A}a)b+a\mathrm{d}_{A}b$ for all $a, b\in A$;

\item[iii.)] $\Omega^1(A)=A\mathrm{d}_{A}A:=\mathrm{span}_{\Bbbk}\{a\mathrm{d}_{A}b\ |\ a,b\in A\}$\ (\textit{surjectivity condition}). 
\end{enumerate}
A morphism of FODCi $(\Omega^1(A),\mathrm{d}_{A})\to(\Omega^1(B),\mathrm{d}_{B})$ is a pair $(f,F)$, such that $f:A\to B$ is a morphism of algebras, $F:\Omega^1(A)\to\Omega^1(B)$ is an $A$-bimodule map, where $f$ defines the $A$-bimodule structure on $\Omega^1(B)$, and $F\circ\mathrm{d}_{A}=\mathrm{d}_{B}\circ f$. 
An isomorphism of FODCi $(\Omega^1(A),\mathrm{d}_{A})$ and $(\Omega^1(B),\mathrm{d}_{B})$ is a morphism $(f,F)\colon(\Omega^1(A),\mathrm{d}_{A})\to(\Omega^1(B),\mathrm{d}_{B})$, such that $f$ and $F$ are invertible.

Given $(\Omega^1(A),\mathrm{d}_{A})$ and $(\Omega^1(B),\mathrm{d}_{B})$ FODCi, an algebra map $f:A\to B$ is called \textit{differentiable}, if there exists a map $F:\Omega^1(A)\to\Omega^1(B)$ such that the pair $(f,F)$ is a morphism of FODCi. Observe that in this case $F$ is uniquely determined by $f$ via $F(a\mathrm{d}_{A}b)=f(a)\mathrm{d}_{B}(f(b))$. Thus, we sometimes write $F=\widehat{f}$.
One easily verifies that if $(f,F)\colon(\Omega^1(A),\mathrm{d}_{A})\to(\Omega^1(B),\mathrm{d}_{B})$ is an isomorphism of FODCi the inverse algebra morphism
$f^{-1}:B\to A$ of $f$ is automatically differentiable with $\widehat{f^{-1}}=\widehat{f}^{-1}:\Omega^1(B)\to\Omega^1(A)$, the inverse $A$-module morphism of $\widehat{f}$.

Let us briefly discuss the first order of covariant DC on comodule algebras. 
Consider a right (left) $H$-comodule algebra $A$ and a FODC $(\Omega^1(A),\mathrm{d}_{A})$. The latter is said to be \textit{right} (left) $H$-\textit{covariant} if $\Omega^1(A)$ is in $_{A}\mm^{H}_{A}$ (resp. $_{A}^{H}\mm_{A}$) and $\mathrm{d}_{A}$ is in $\mm^{H}$ (resp. $^{H}\mm$). Given an $H$-bicomodule algebra $A$, we say that $(\Omega^1(A),\mathrm{d}_{A})$ is $H$-\textit{bicovariant} if $\Omega^1(A)$ is in $_{A}^{H}\mm^{H}_{A}$ and $\mathrm{d}_{A}$ is in $^{H}\mm^{H}$.
As in the more general case of DC, we usually omit the reference to $H$ in case of the $H$-comodule $H$ itself with coaction given by the coproduct, i.e., we speak of \textit{right/left or bicovariant FODC} in this setting.

We recall from \cite[Theorem 2.26]{BM} the classification theorem due to Woronowicz (for the original source of this result see \cite[Theorem 1.6]{Wo}): every right covariant FODC on $H$ is uniquely determined (up to isomorphism) by a left $H$-ideal $I\subseteq H^+:=\ker\varepsilon$. The corresponding right covariant calculus on $H$ is given by $\Omega^1(H):=(H^{+}/I)\otimes H$ and $\mathrm{d}_{H}h=(\pi\otimes\mathrm{Id})(\Delta(h)-1_{H}\otimes h)$, where the left and right $H$-actions are
\[
h\cdot([g]\otimes h')=[h_{1}g]\otimes h_{2}h',\ ([g]\otimes h')\cdot h=[g]\otimes h'h
\]
for $h,h'\in H$, $g\in H^{+}$ and $\pi\colon H^+\to H^+/I$, $g\mapsto\pi(g):=[g]$ denotes the quotient map. The right $H$-coaction on $\Omega^1(H)$ is $\rho:=\mathrm{Id}_{H^{+}/I}\otimes\Delta$. 
This FODC is bicovariant if and only if $\mathrm{Ad}_{L}(I)\subseteq H\otimes I$, where $\mathrm{Ad}_L$ denotes the \textit{adjoint left coaction} $\mathrm{Ad}_{L}:H\to H\otimes H,\ h\mapsto h_{1}S(h_{3})\otimes h_2$. In this case, the left $H$-coaction on $\Omega^1(H)$ is given by
\[
\lambda([g]\otimes h)=g_{1}S(g_{3})h_{1}\otimes([g_{2}]\otimes h_{2}),
\]
for all $g\in H^{+}$ and $h\in H$.

\subsubsection*{Quantum principal bundles and connections}

We recall from \cite{BrMa} the definition of a quantum principal bundle and of a connection on it, see also \cite[Section 5.4]{BM}. In this subsection $H$ denotes a Hopf algebra, $A$ is a right $H$-comodule algebra and we write $B:=A^{\mathrm{co}H}\subseteq A$ for the subalgebra of coinvariants. Given a FODC $(\Omega^1(A),\mathrm{d}_A)$ we always consider the \textit{pullback calculus} on $B$: this is the FODC $(\Omega^1(B),\mathrm{d}_B)$ given by the restrictions $\Omega^1(B):=B\mathrm{d}_{A}B\subseteq\Omega^1(A)$, $\mathrm{d}_B:=\mathrm{d}_{A}|_B\colon B\to\Omega^1(B)$. We further define the \textit{horizontal forms} as the $A$-bimodule generated by $\Omega^1(B)$, i.e., $\Omega^1_\mathrm{hor}(A):=A\Omega^1(B)A\subseteq \Omega^1(A)$. Then, a \textit{quantum principal bundle} (QPB) on $A$ is the data of a right $H$-covariant FODC $(\Omega^1(A),\mathrm{d}_{A})$ on $A$ and a bicovariant FODC $(\Omega^1(H),\mathrm{d}_{H})$ on $H$
such that the \textit{vertical map} 
\[
\mathrm{ver}:\Omega^1(A)\to A\otimes{}^{\mathrm{co}H}\Omega^1(H),\qquad a\mathrm{d}_{A}a'\mapsto aa'_{0}\otimes S(a'_{1})\mathrm{d}_{H}a'_{2} 
\]
is well-defined and the \textit{Atiyah sequence}
\begin{equation}\label{Atiyahseq'}
    0\longrightarrow\Omega^1_{\mathrm{hor}}(A)\overset{i}\longrightarrow\Omega^1(A)\overset{\mathrm{ver}}\longrightarrow A\otimes{}^{\mathrm{co}H}\Omega^1(H)\longrightarrow0
\end{equation}
is exact in $_{A}\mm$, where $i$ is the inclusion.

If the vertical map is well-defined one clearly has $\Omega^1_{\mathrm{hor}}(A)\subseteq\mathrm{ker}(\mathrm{ver})$. Thus, we have a QPB if $\mathrm{ver}$ is well-defined and surjective such that $\mathrm{ker}(\mathrm{ver})\subseteq\Omega^1_{\mathrm{hor}}(A)$.
Let us discuss a criterion for $\mathrm{ver}$ to be well-defined.

Let $A$ be a right $H$-comodule algebra with a FODC $(\Omega^1(A),\mathrm{d}_{A})$ on $A$ and a bicovariant FODC $(\Omega^1(H),\mathrm{d}_{H})$ on $H$.
By \cite[Theorem 2.8]{PS} we have that $\rho:A\to A\otimes H$ is differentiable if and only if $(\Omega^1(A),\mathrm{d}_{A})$ is right $H$-covariant and there is a map $\pi:\Omega^1(A)\to A\square_{H}\Omega^1(H)$ in $_{A}\mm^{H}_{A}$ such that $(\mathrm{Id}_{A}\otimes\mathrm{d}_{H})\circ\rho=\pi\circ\mathrm{d}_{A}$.
Using the isomorphism $\phi:A\square_{H}\Omega^1(H)\to A\otimes{}^{\mathrm{co}H}\Omega^1(H)$, $a\otimes\gamma\mapsto a\otimes S(\gamma_{-1})\gamma_{0}$ we obtain
\[
\phi\circ\pi(a\mathrm{d}_{A}a')=a_{0}a'_{0}\otimes S((a_{1}\mathrm{d}_{H}a'_{1})_{-1})(a_{1}\mathrm{d}_{H}a'_{1})_{0}=a_{0}a'_{0}\otimes S(a_{1}a'_{1})a_{2}\mathrm{d}_{H}a'_{2}=a_{0}a'_{0}\otimes S(a'_{1})\varepsilon(a_{1})\mathrm{d}_{H}a'_{2}=\mathrm{ver}(a\mathrm{d}_{A}a').
\]
Thus, if we assume to have a bicovariant FODC $(\Omega^1(H),\mathrm{d}_{H})$ and a FODC $(\Omega^1(A),\mathrm{d}_{A})$, $\rho$ being differentiable is equivalent to $(\Omega^1(A),\mathrm{d}_{A})$ being right $H$-covariant and $\mathrm{ver}$ being well-defined (see also \cite[page 425]{BM}). If $\rho$ is differentiable then $\pi=\pi_{2}\circ\widehat{\rho}$, where $\pi_{2}:\Omega^1(A\otimes H)\to A\otimes\Omega^1(H)$ is the canonical projection, and so $\mathrm{ver}=\phi\circ\pi_{2}\circ\widehat{\rho}$. Moreover, $\rho':=\pi_{1}\circ\widehat{\rho}$ is the right $H$-coaction of $\Omega^1(A)$, where $\pi_{1}:\Omega^1(A\otimes H)\to\Omega^1(A)\otimes H$ is the other canonical projection.

Note that the inclusion $i:\Omega^1_{\mathrm{hor}}(A)\to\Omega^1(A)$ is a morphism in $_{A}\mm^{H}$ and that $A\otimes{}^{\mathrm{co}H}\Omega^1(H)$ is in $_{A}\mm^{H}$ with left $A$-action given by the multiplication on the first tensor factor and right $H$-coaction given by the diagonal one. 
If $\mathrm{ver}$ is well-defined then it is automatically right $H$-colinear, indeed
\[
\begin{split}
    \mathrm{ver}(a\mathrm{d}_{A}a')_{0}\otimes\mathrm{ver}(a\mathrm{d}_{A}a')_{1}&=(aa'_{0})_{0}\otimes(S(a'_{1})\mathrm{d}_{H}a'_{2})_{0}\otimes(aa'_{0})_{1}(S(a'_{1})\mathrm{d}_{H}a'_{2})_{1}\\&=a_{0}a'_{0}\otimes S(a'_{2})_{1}(\mathrm{d}_{H}a'_{3})_{0}\otimes a_{1}a'_{1}S(a'_{2})_{2}(\mathrm{d}_{H}a'_{3})_{1}\\&=a_{0}a'_{0}\otimes S(a'_{3})\mathrm{d}_{H}a'_{4}\otimes a_{1}a'_{1}S(a'_{2})a'_{5}\\
    &=\mathrm{ver}((a\mathrm{d}_{A}a')_{0})\otimes(a\mathrm{d}_{A}a')_{1}.
\end{split}
\]
Thus, exactness of the sequence \eqref{Atiyahseq'} in $_{A}\mm$ coincides with its exactness in $_{A}\mm^{H}$.
\medskip

A \textit{connection} is a splitting of \eqref{Atiyahseq'}, i.e., a morphism $c:A\otimes{}^{\mathrm{co}H}\Omega^1(H)\to\Omega^1(A)$ in $_{A}\mm^{H}$ such that $\mathrm{ver}\circ c=\mathrm{Id}$. 
Equivalently, a connection is a morphism $\pi:\Omega^1(A)\to\Omega^1(A)$ in $_{A}\mm^{H}$ such that $\pi\circ\pi=\pi$ and $\mathrm{ker}(\pi)=\Omega^1_{\mathrm{hor}}(A)$.
In fact, as shown in \cite[Proposition 5.41]{BM}, the two notions of connection coincide: to each $\pi$ one associates $c$ such that $c(a\otimes\gamma)=\pi\mathrm{ver}^{-1}(a\otimes\gamma)$ for all $a\in A$ and $\gamma\in{}^{\mathrm{co}H}\Omega^1(H)$ and to each $c$ is associated $\pi=c\circ\mathrm{ver}$.
A connection $\pi$ is called \textit{strong} if $(\mathrm{Id}-\pi)\mathrm{d}_{A}A\subseteq\Omega^1(B)A$ (cf. \cite[Definition 2.1]{Hajac96} and \cite[page 426]{BM}).
\medskip

Consider the universal FODCi $(\Omega^1_u(A),\mathrm{d}^{u}_{A})$ and $(\Omega^1_u(H),\mathrm{d}^{u}_{H})$. These form a QPB if and only if $B=A^{\mathrm{co}H}\subseteq A$ is a Hopf--Galois extension, i.e., if and only if the canonical map
$$
\mathrm{can}\colon A\otimes_BA\to A\otimes H,\qquad
a\otimes_Ba'\mapsto aa'_0\otimes a'_1
$$
is bijective. This result is due to \cite[Lemma 3.2]{Br}, see also \cite[Proposition 1.6]{Hajac96}. In the literature this is sometimes referred to as the \textit{universal} QPB. Connections on the universal QPB are given by unital right $H$-colinear\footnote{Here we endow $H$ with the right $H$-coaction $\mathrm{Ad}_R:h\mapsto h_{2}\otimes S(h_{1})h_{3}$ and $A\otimes A$ with the diagonal right coaction.} maps $\omega\colon H\to A\otimes A$, such that
$$
\mathrm{can}\circ\pi\circ\omega=1_A\otimes\mathrm{Id}_{H},
$$
where $\pi\colon A\otimes A\to A\otimes_BA$ is the quotient map. We also recall that, when the Hopf algebra $H$ has invertible antipode, the right $H$-coaction $\rho$ on $A$ converts to a left $H$-coaction $\lambda(a)=S^{-1}(a_{1})\otimes a_{0}$ for all $a\in A$, making $A$ a left $H^{\mathrm{op}}$-comodule algebra. Then, $\mathrm{Id}_{A}\otimes\rho$ and $\lambda\otimes\mathrm{Id}_{A}$ make $A\otimes A$ into an $H$-bicomodule. So, when $H$ has bijective antipode, strong connections on the universal QPB correspond to unital bicomodule maps $\omega:H\to A\otimes A$ such that $m_{A}\omega(h)=\varepsilon(h)1_{A}$ for all $h\in H$ 
(see \cite[Lemma 5.8]{BM}). Moreover, when $H$ has bijective antipode, the existence of a strong connection is equivalent to faithful flatness of $A$ as a left $B$-module and to the right $H$-equivariant projectivity of $A$, i.e., to the fact that there exists a left $B$-linear and right $H$-colinear map $s:A\to B\otimes A$ such that $m\circ s=\mathrm{Id}_{A}$, where $m$ is the restriction of $m_{A}$ to $B\otimes A$ (cf. \cite[Corollaries 2.2 and 2.4]{DGH}, see also \cite[Section 6.4]{Hajac}). If $B\subseteq A$ is a cleft extension with cleaving map $j$ there exists such an $s$, namely $s(a)=a_{0}j^{-1}(a_{1})\otimes j(a_{2})$. 
\medskip

In \cite[page 427]{BM} a QPB is called \textit{regular} if it descends from the universal QPB (i.e., from a Hopf--Galois extension) such that the latter has a strong connection. Hence, a QPB on a cleft extension is always regular in this sense. Moreover, a QPB is called \textit{strong} if $A\Omega^1(B)=\Omega^1(B)A$ as subspaces of $\Omega^1(A)$, see \cite[Section 5.4.2]{BM}.

\section{The crossed product calculus}\label{Sec3}

Given a left $H$-module algebra $B$, a bicovariant FODC on $H$ and an $H$-module FODC on $B$, there is a canonical right $H$-covariant FODC on the smash product algebra $B\#H$ (see \cite[Theorem 2.7]{PS}). In Section \ref{Sec3.1} we generalize this construction to the case of the crossed product algebra $B\#_{\sigma}H$, when $B$ is a $\sigma$-twisted left $H$-module algebra. A characterization of the smash product calculus follows in Section \ref{Sec:ClasSmash}. Higher order forms for the crossed product algebra are constructed in Section \ref{Sec3.3}, the QPB structure of crossed product 1-forms and corresponding bimodule covariant derivatives on associated bundles are discussed in Section \ref{Sec3.4}, while we prove the 1:1 correspondence of connections and connection $1$-forms of the crossed product calculus in Section \ref{Sec3.5}.

\subsection{Crossed product 1-forms}\label{Sec3.1}

In this subsection $H$ is a Hopf algebra and $B$ is a $\sigma$-twisted left $H$-module algebra. We introduce a right $H$-covariant FODC on the crossed product algebra $B\#_{\sigma}H$. First, we define a twisted analog of a $B$-bimodule. 

\begin{definition}\label{TwistedBimon}
Let $M$ be in $_{B}\mm_{B}$. We say that $M$ is a $\sigma$-\textit{twisted $B$-bimodule} if there exists a linear map $\cdot\colon H\otimes M\to M$ such that for all $m\in M$, $h,h'\in H$ and $b,b'\in B$

\begin{enumerate}
\item[i.)]\ $1_{H}\cdot m=m$,
\item[ii.)]\ $h\cdot(bmb')=(h_{1}\cdot b)(h_{2}\cdot m)(h_{3}\cdot b')$,
\item[iii.)]\ $h\cdot(h'\cdot m)=\sigma(h_{1}\otimes h'_{1})(h_{2}h'_{2}\cdot m)\sigma^{-1}(h_{3}\otimes h'_{3})$.
\end{enumerate}
\end{definition}

If $\sigma$ is the trivial 2-cocycle we recover the notion of left $H$-equivariant $B$-bimodule. Indeed, in this case $B$ becomes a left $H$-module algebra and the first and third equations imply that $M$ is in $_{H}\mm$, while the second equation confers that the left and right $B$-actions of $M$ are left $H$-linear, so that $M$ is in $_{B}(_{H}\mm)_{B}$.

\begin{lemma}\label{twistedHmodcal}
Let $(\Omega^1(B),\mathrm{d}_{B})$ be a FODC on $B$ such that there exists a linear map $\cdot:H\otimes\Omega^1(B)\to\Omega^1(B)$ with 
\begin{align}
h\cdot(b\mathrm{d}_{B}b')&=(h_{1}\cdot b)(h_{2}\cdot\mathrm{d}_{B}b')\label{comp.}\\ \mathrm{d}_{B}(h\cdot b)&=h\cdot \mathrm{d}_{B}b\label{H-lin}\\ \mathrm{d}_{B}\circ\sigma&=0\label{dsigma} 
\end{align}
for every $h\in H$ and $b,b'\in B$. Then $\Omega^1(B)$ is a $\sigma$-twisted $B$-bimodule.
\end{lemma}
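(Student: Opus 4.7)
The plan is to exploit the surjectivity condition $\Omega^1(B)=B\,\mathrm{d}_B B$, which reduces the verification of the three conditions of Definition~\ref{TwistedBimon} to the case of generating elements $m=c\,\mathrm{d}_B c'$. Condition (i) is immediate: by \eqref{comp.} and \eqref{H-lin}, together with the unitality $1_H\cdot b=b$ inherent to $B$ being a $\sigma$-twisted left $H$-module, one obtains $1_H\cdot(c\,\mathrm{d}_B c')=(1_H\cdot c)\,\mathrm{d}_B(1_H\cdot c')=c\,\mathrm{d}_B c'$.

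For condition (ii), I would use the Leibniz rule to rewrite $bmb'=bc\,\mathrm{d}_B(c'b')-bcc'\,\mathrm{d}_B b'$, and then expand $h\cdot(bmb')$ via \eqref{comp.} together with the measure property $h\cdot(xy)=(h_1\cdot x)(h_2\cdot y)$ of $B$. Inside the first summand, \eqref{H-lin} and a further Leibniz expansion split the image of $\mathrm{d}_B(c'b')$ under the action of $H$ into two contributions, one of which cancels the second summand above. What remains is $(h_1\cdot b)(h_2\cdot c)(h_3\cdot\mathrm{d}_B c')(h_4\cdot b')$, and \eqref{comp.} in reverse identifies this with $(h_1\cdot b)(h_2\cdot m)(h_3\cdot b')$, as required.

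Condition (iii) is the main obstacle. I would first derive the companion identity $\mathrm{d}_B\circ\sigma^{-1}=0$: applying $\mathrm{d}_B$ to the convolution identity $\sigma(h_1\otimes h'_1)\sigma^{-1}(h_2\otimes h'_2)=\varepsilon(h)\varepsilon(h')1_B$ and using \eqref{dsigma} yields $\sigma(h_1\otimes h'_1)\,\mathrm{d}_B\sigma^{-1}(h_2\otimes h'_2)=0$, after which a convolution-inverse collapse on the left (using an additional $\sigma^{-1}$ indexed by the preceding Sweedler components) isolates $\mathrm{d}_B\sigma^{-1}(h\otimes h')=0$. With this in hand, I rewrite $h\cdot(h'\cdot m)$ for $m=c\,\mathrm{d}_B c'$ by pushing the action through \eqref{comp.} and \eqref{H-lin} to obtain $\bigl(h_1\cdot(h'_1\cdot c)\bigr)\,\mathrm{d}_B\bigl(h_2\cdot(h'_2\cdot c')\bigr)$. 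The $\sigma$-twisted $H$-module property of $B$ replaces each nested action by a $\sigma$-conjugate of an action by a product in $H$. Expanding $\mathrm{d}_B$ on the resulting triple product via Leibniz, the contributions hitting $\sigma$ or $\sigma^{-1}$ vanish by \eqref{dsigma} and $\mathrm{d}_B\sigma^{-1}=0$, leaving only the middle term. A convolution-inverse collapse between the two inner $\sigma$-factors, together with \eqref{comp.} in reverse, reassembles the expression into the desired $\sigma(h_1\otimes h'_1)(h_2 h'_2\cdot m)\sigma^{-1}(h_3\otimes h'_3)$.

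The key technical difficulty is the careful Sweedler bookkeeping in this last step: one must track how the iterated coproducts of $h$ and $h'$ distribute across the $\sigma$-conjugation, the measure, and the differential, and verify that the central convolution-inverse collapse leaves exactly the correct Sweedler components on the outer $\sigma$ and $\sigma^{-1}$ to match the target expression.
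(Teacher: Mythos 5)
Your proposal is correct and follows essentially the same route as the paper's proof: reduction to generators $c\,\mathrm{d}_Bc'$, the Leibniz-cancellation trick for condition (ii), the auxiliary identity $\mathrm{d}_B\circ\sigma^{-1}=0$ derived from \eqref{dsigma} and the convolution identity, and the final computation for (iii) in which the $\sigma$- and $\sigma^{-1}$-derivative terms vanish and the inner $\sigma^{-1}$--$\sigma$ pair collapses by convolution invertibility. Your one-pass verification of (ii) merely reorganizes the paper's two-stage argument (right-action compatibility first, then the sandwich), and the Sweedler bookkeeping you flag works out exactly as in the paper.
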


\begin{proof}
Since every $\beta\in\Omega^1(B)$ is a finite sum $\beta=b^i\mathrm{d}_Bb'_i$ with $b^{i},b'_{i}\in B$, it is sufficient to show $i.)$, $ii.)$ and $iii.)$ of Definition \ref{TwistedBimon} for $b\mathrm{d}_{B}b'$, with $b,b'\in B$. By assumption $\Omega^1(B)$ is a $B$-bimodule and clearly $i.)$ holds, since
\[
1_{H}\cdot(b\mathrm{d}_{B}b')\overset{\eqref{comp.}}{=}(1_{H}\cdot b)(1_{H}\cdot\mathrm{d}_{B}b')\overset{\eqref{H-lin}}{=}b\mathrm{d}_{B}(1_{H}\cdot b')=b\mathrm{d}_{B}b'.
\]
Furthermore, for all $b,b',b''\in B$ and $h\in H$, we have
\[
\begin{split}
h\cdot((b\mathrm{d}_{B}b')b'')&=h\cdot(b((\mathrm{d}_{B}b')b''))=h\cdot(b\mathrm{d}_{B}(b'b'')-bb'\mathrm{d}_{B}b'')\\&\overset{\eqref{comp.}}{=}(h_{1}\cdot b)(h_{2}\cdot\mathrm{d}_{B}(b'b''))-(h_{1}\cdot bb')(h_{2}\cdot\mathrm{d}_{B}b'')\\&\overset{\eqref{H-lin}}{=}(h_{1}\cdot b)\mathrm{d}_{B}(h_{2}\cdot b'b'')-(h_{1}\cdot b )(h_{2}\cdot b')(h_{3}\cdot\mathrm{d}_{B}b'')\\&\overset{\eqref{H-lin}}{=}(h_{1}\cdot b)\mathrm{d}_{B}((h_{2}\cdot b')(h_{3}\cdot b''))-(h_{1}\cdot b)(h_{2}\cdot b')\mathrm{d}_{B}(h_{3}\cdot b'')\\&=(h_{1}\cdot b)\mathrm{d}_{B}(h_{2}\cdot b')(h_{3}\cdot b'')\overset{\eqref{H-lin}}{=}(h_{1}\cdot b)(h_{2}\cdot\mathrm{d}_{B}b')(h_{3}\cdot b'')\\&\overset{\eqref{comp.}}{=}(h_{1}\cdot b\mathrm{d}_{B}b')(h_{2}\cdot b''),
\end{split}
\]
which implies
\[
\begin{split}
h\cdot(b(b'\mathrm{d}_{B}b'')b''')&=h\cdot((bb'\mathrm{d}_{B}b'')b''')=(h_{1}\cdot bb'\mathrm{d}_{B}b'')(h_{2}\cdot b''')\\&=
(h_{1}\cdot bb')(h_{2}\cdot\mathrm{d}_{B}b'')(h_{3}\cdot b''')\\&=(h_{1}\cdot b)(h_{2}\cdot b')(h_{3}\cdot\mathrm{d}_{B}b'')(h_{4}\cdot b''')\\&=(h_{1}\cdot b)(h_{2}\cdot b'\mathrm{d}_{B}b'')(h_{3}\cdot b'''),
\end{split}
\]
i.e., $ii.)$ holds. For $iii.)$ we observe that, given \eqref{dsigma}, we have
\[
\begin{split}
\mathrm{d}_{B}(\sigma^{-1}(h\otimes h'))&=\varepsilon(h_{1}h'_{1})\mathrm{d}_{B}(\sigma^{-1}(h_{2}\otimes h'_{2}))=\sigma^{-1}(h_{1}\otimes h'_{1})\sigma(h_{2}\otimes h'_{2})\mathrm{d}_{B}(\sigma^{-1}(h_{3}\otimes h'_{3}))\\&=\sigma^{-1}(h_{1}\otimes h'_{1})\mathrm{d}_{B}(\sigma(h_{2}\otimes h'_{2}))\sigma^{-1}(h_{3}\otimes h'_{3})+\sigma^{-1}(h_{1}\otimes h'_{1})\sigma(h_{2}\otimes h'_{2})\mathrm{d}_{B}(\sigma^{-1}(h_{3}\otimes h'_{3}))\\&=\sigma^{-1}(h_{1}\otimes h'_{1})\mathrm{d}_{B}(\sigma(h_{2}\otimes h'_{2})\sigma^{-1}(h_{3}\otimes h'_{3}))=\sigma^{-1}(h_{1}\otimes h'_{1})\mathrm{d}_{B}(\varepsilon(h_{2}h'_{2})1_{B})=0.
\end{split}
\]
Thus, we obtain that
\begin{align*}
    h\cdot(h'\cdot b\mathrm{d}_{B}b')
    &=(h_1\cdot(h'_1\cdot b))(h_2\cdot(h'_2\cdot\mathrm{d}_{B}b'))=(h_1\cdot(h'_1\cdot b))\mathrm{d}_B(h_2\cdot(h'_2\cdot b'))\\
    &=\sigma(h_1\otimes h'_1)(h_2h'_2\cdot b)\sigma^{-1}(h_3\otimes h'_3)\mathrm{d}_B(\sigma(h_4\otimes h'_4)(h_5h'_5\cdot b')\sigma^{-1}(h_6\otimes h'_6))\\
    &=\sigma(h_1\otimes h'_1)(h_2h'_2\cdot b)\sigma^{-1}(h_3\otimes h'_3)\sigma(h_4\otimes h'_4)\mathrm{d}_B(h_5h'_5\cdot b')\sigma^{-1}(h_6\otimes h'_6)\\
    &=\sigma(h_1\otimes h'_1)(h_2h'_2\cdot b)\mathrm{d}_B(h_3h'_3\cdot b')\sigma^{-1}(h_4\otimes h'_4)\\
    &=\sigma(h_1\otimes h'_1)(h_2h'_2\cdot b)(h_3h'_3\cdot\mathrm{d}_{B}b')\sigma^{-1}(h_4\otimes h'_4)\\
    &=\sigma(h_1\otimes h'_1)(h_2h'_2\cdot(b\mathrm{d}_Bb'))\sigma^{-1}(h_3\otimes h'_3),
\end{align*}
where in the fourth equation we used the Leibniz rule and \eqref{dsigma}.
\end{proof}

Note that, if the map $\cdot:H\otimes\Omega^1(B)\to\Omega^1(B)$ in Lemma \ref{twistedHmodcal} exists, then it is unique, since by \eqref{comp.} and \eqref{H-lin} it is completely determined as $h\cdot(b\mathrm{d}_{B}b')=(h_{1}\cdot b)\mathrm{d}_{B}(h_{2}\cdot b')$ for all $h\in H$ and $b,b'\in B$.

\begin{definition}\label{HmodFODC}
Let $(\Omega^1(B),\mathrm{d}_{B})$ be a FODC on a $\sigma$-twisted left $H$-module algebra $B$. We say that it is a $\sigma$-\textit{twisted $H$-module} differential calculus if there exists a linear map $\cdot:H\otimes\Omega^1(B)\to\Omega^1(B)$ such that \eqref{comp.}, \eqref{H-lin} and \eqref{dsigma} of Lemma \ref{twistedHmodcal} are satisfied. In particular, $\Omega^1(B)$ is a $\sigma$-twisted $B$-bimodule.
\end{definition}

\begin{remark}
To explain the nomenclature of the previous definition, recall that in \cite[Definition 2.5]{PS}
an $H$-module differential calculus on a left $H$-module algebra $B$ is a FODC $(\Omega^1(B),\mathrm{d}_B)$, where $\Omega^1(B)$ is in $_{B}(_{H}\mm)$ and $\mathrm{d}_{B}$ is in $_{H}\mm$; it follows that $\Omega^1(B)$ is in $_{B}(_{H}\mm)_{B}$. In Definition \ref{HmodFODC} we recover this case if $\sigma$ is the trivial 2-cocycle. Indeed, as already mentioned, Definition \ref{TwistedBimon} implies that $\Omega^1(B)$ is in $_{B}(_{H}\mm)_{B}$, while \eqref{H-lin} of Lemma \ref{twistedHmodcal} implies that $\mathrm{d}_{B}$ is left $H$-linear.
Observe that \eqref{dsigma} is trivially satisfied when $\sigma$ is trivial, while we will show that when $\sigma$ is not trivial it is a necessary condition in order to construct the FODC on the crossed product algebra $B\#_{\sigma}H$.
\end{remark}

\begin{lemma}\label{bimoduleconstruction}
If $M$ is a $\sigma$-twisted $B$-bimodule and $N$ is in $^{H}_{H}\mm^{H}_{H}$, then $(M\otimes H)\oplus(B\otimes N)$ is in $_{B\#_{\sigma}H}\mm^{H}_{B\#_{\sigma}H}$ with respect to the $B\#_\sigma H$-actions determined for all $b,b'\in B$, $h,h'\in H$, $\beta\in M$ and $\gamma\in N$ by
\begin{align}
    (b'\otimes h')\otimes(\beta\otimes h+b\otimes\gamma)&\mapsto b'(h'_{1}\cdot\beta)\sigma(h'_{2}\otimes h_{1})\otimes h'_{3}h_{2}+b'(h'_{1}\cdot b)\sigma(h'_{2}\otimes\gamma_{-1})\otimes h'_{3}\gamma_{0}\label{leftact}\\
    (\beta\otimes h+b\otimes\gamma)\otimes(b'\otimes h')&\mapsto\beta(h_{1}\cdot b')\sigma(h_{2}\otimes h'_{1})\otimes h_{3}h'_{2}+b(\gamma_{-2}\cdot b')\sigma(\gamma_{-1}\otimes h'_{1})\otimes\gamma_{0}h'_{2}\label{rightact}
\end{align}
and right $H$-coaction
\begin{equation}\label{rightcoact}
    \rho':C\to C\otimes H,\qquad \beta\otimes h+b\otimes\gamma\mapsto\beta\otimes h_{1}\otimes h_{2}+b\otimes\gamma_{0}\otimes\gamma_{1}.
\end{equation}
\end{lemma}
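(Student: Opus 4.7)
The plan is to verify the axioms on each direct summand separately, since the $B\#_\sigma H$-actions \eqref{leftact}, \eqref{rightact} and the right $H$-coaction \eqref{rightcoact} preserve the decomposition $(M\otimes H)\oplus(B\otimes N)$. On each summand the formulas mimic the crossed product multiplication \eqref{crossed}, so the proofs closely parallel the standard argument that $B\#_\sigma H$ is associative (see \cite[Lemma 7.1.2]{Mon}).

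On $M\otimes H$, the left and right $B\#_\sigma H$-actions coincide with \eqref{crossed} after replacing one factor $b\in B$ by $\beta\in M$. The three conditions defining a $\sigma$-twisted $B$-bimodule (Definition \ref{TwistedBimon}) are precisely those needed to mimic, for $\beta\in M$, the role played by $b\in B$ in the proof of associativity of $B\#_\sigma H$. Concretely, condition i.) together with the counital normalisation of $\sigma$ yields the unit axiom; condition ii.) controls how the $H$-action distributes over products involving $\beta$ when computing $(b''\otimes h'')\cdot((b'\otimes h')\cdot(\beta\otimes h))$; and condition iii.) absorbs the extra 2-cocycle factor that arises from the product of the two $H$-components, matching it with the corresponding factor appearing in $((b''\otimes h'')\cdot(b'\otimes h'))\cdot(\beta\otimes h)$ via the 2-cocycle identity on $\sigma$. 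Bimodule compatibility follows the same pattern, and right $H$-covariance is immediate from coassociativity of $\Delta$.

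On $B\otimes N$, the left coaction $\lambda(\gamma)=\gamma_{-1}\otimes\gamma_0$ of $N$ plays the role of the coproduct of $H$ in the ``second factor'' of \eqref{crossed} when verifying the left action, while the right coaction of $N$ plays the analogous role for the right coaction \eqref{rightcoact}. The computation is formally identical, with coassociativity of $\lambda$ (the identity $\gamma_{-2}\otimes\gamma_{-1}\otimes\gamma_0=\gamma_{-1,(1)}\otimes\gamma_{-1,(2)}\otimes\gamma_0$) replacing that of $\Delta$. The left $H$-action on $N$ is used to form $h'_{(3)}\cdot\gamma_0$ in \eqref{leftact}, and the compatibility of this action with the left coaction of $N$ (one of the axioms making $N$ an object of $^{H}_{H}\mm^{H}_{H}$) is what makes associativity go through. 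The compatibility between the left action \eqref{leftact} and the right action \eqref{rightact} reduces to the commutativity of the left and right $H$-actions on $N$ together with the commutativity of the two coactions of $N$. Right $H$-covariance in the sense of \eqref{rightcoact} uses that the right coaction $\rho_N$ is left $H$-colinear and commutes with both actions.

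The main obstacle is purely notational: one must keep careful track of a large number of Sweedler indices. Associativity on $B\otimes N$ in particular involves up to five iterated coproducts of elements of $H$ interleaved with three iterated left coactions of $\gamma\in N$, and one has to line these up with the 2-cocycle condition. Apart from this bookkeeping, every axiom reduces to a single instance of the associativity proof for the crossed product algebra, with $M$ or $N$ playing the role of $B$ or $H$ as appropriate.
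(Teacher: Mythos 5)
Your proposal is correct and takes essentially the same route as the paper: a direct Sweedler-notation verification, summand by summand, in which conditions i.)--iii.) of Definition \ref{TwistedBimon} together with the $2$-cocycle identity handle $M\otimes H$ exactly as in the associativity proof for $B\#_\sigma H$, while the Hopf-bimodule axioms of $N$ in $^{H}_{H}\mm^{H}_{H}$ handle $B\otimes N$. One minor misattribution worth noting (not a gap, since all the needed identities are axioms of $^{H}_{H}\mm^{H}_{H}$): the left/right action compatibility on $B\otimes N$ actually rests on the left $H$-colinearity of \emph{both} $H$-actions on $N$ (the identities $\lambda(h\gamma)=h_1\gamma_{-1}\otimes h_2\gamma_0$ and $\lambda(\gamma h)=\gamma_{-1}h_1\otimes\gamma_0h_2$) plus the $H$-bimodule axiom --- no $\gamma_1$ appears there --- whereas the commutativity of the two coactions enters only in the covariance checks for $\rho'$.
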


\begin{proof}
Let $C:=(M\otimes H)\oplus(B\otimes N)$ and fix $b,b',b''\in B$, $h,h',h''\in H$, $\beta\in M$ and $\gamma\in N$. We first show that $C$ is a $B\#_{\sigma}H$-bimodule. Indeed, using that $M$ is a $\sigma$-twisted $B$-bimodule and $B$ is a $\sigma$-twisted left $H$-module algebra, we obtain
\[
\begin{split}
(b''\otimes h'')&\cdot((b'\otimes h')\cdot(\beta\otimes h))=(b''\otimes h'')\cdot(b'(h'_{1}\cdot\beta)\sigma(h'_{2}\otimes h_{1})\otimes h'_{3}h_{2})\\&=b''(h''_{1}\cdot b'(h'_{1}\cdot\beta)\sigma(h'_{2}\otimes h_{1}))\sigma(h''_{2}\otimes h'_{3}h_{2})\otimes h''_{3}h'_{4}h_{3}\\&=b''(h''_{1}\cdot b')(h''_{2}\cdot(h'_{1}\cdot\beta))(h''_{3}\cdot\sigma(h'_{2}\otimes h_{1}))\sigma(h''_{4}\otimes h'_{3}h_{2})\otimes h''_{5}h'_{4}h_{3}\\&=b''(h''_{1}\cdot b')\sigma(h''_{2}\otimes h'_{1})(h''_{3}h'_{2}\cdot\beta)\sigma^{-1}(h''_{4}\otimes h'_{3})(h''_{5}\cdot\sigma(h'_{4}\otimes h_{1}))\sigma(h''_{6}\otimes h'_{5}h_{2})\otimes h''_{7}h'_{6}h_{3}\\&=b''(h''_{1}\cdot b')\sigma(h''_{2}\otimes h'_{1})(h''_{3}h'_{2}\cdot\beta)\sigma^{-1}(h''_{4}\otimes h'_{3})\sigma(h''_{5}\otimes h'_{4})\sigma(h''_{6}h'_{5}\otimes h_{1})\otimes h''_{7}h'_{6}h_{2}\\&=b''(h''_{1}\cdot b')\sigma(h''_{2}\otimes h'_{1})(h''_{3}h'_{2}\cdot\beta)\sigma(h''_{4}h'_{3}\otimes h_{1})\otimes h''_{5}h'_{4}h_{2}\\&=(b''(h''_{1}\cdot b')\sigma(h''_{2}\otimes h'_{1})\otimes h''_{3}h'_{2})\cdot(\beta\otimes h)\\&=((b''\otimes h'')(b'\otimes h'))\cdot(\beta\otimes h)
\end{split}
\]
and
\[
(1_{B}\otimes1_{H})\cdot(\beta\otimes h)=1_{B}(1_{H}\cdot\beta)\sigma(1_{H}\otimes h_{1})\otimes1_{H}h_{2}=\beta\varepsilon(h_{1})\otimes h_{2}=\beta\otimes h.
\]
Similarly, it follows that
\[
\begin{split}
(b''\otimes h'')\cdot&((b'\otimes h')\cdot(b\otimes\gamma))=(b''\otimes h'')\cdot(b'(h'_{1}\cdot b)\sigma(h'_{2}\otimes\gamma_{-1})\otimes h'_{3}\gamma_{0})\\&=b''(h''_{1}\cdot b'(h'_{1}\cdot b)\sigma(h'_{2}\otimes\gamma_{-1}))\sigma(h''_{2}\otimes(h'_{3}\gamma_{0})_{-1})\otimes h''_{3}(h'_{3}\gamma_{0})_{0}\\&=b''(h''_{1}\cdot b')(h''_{2}\cdot(h'_{1}\cdot b))(h''_{3}\cdot\sigma(h'_{2}\otimes\gamma_{-2}))\sigma(h''_{4}\otimes h'_{3}\gamma_{-1})\otimes h''_{5}h'_{4}\gamma_{0}\\&=b''(h''_{1}\cdot b')\sigma(h''_{2}\otimes h'_{1})(h''_{3}h'_{2}\cdot b)\sigma^{-1}(h''_{4}\otimes h'_{3})(h''_{5}\cdot\sigma(h'_{4}\otimes\gamma_{-2}))\sigma(h''_{6}\otimes h'_{5}\gamma_{-1})\otimes h''_{7}h'_{6}\gamma_{0}\\&=b''(h''_{1}\cdot b')\sigma(h''_{2}\otimes h'_{1})(h''_{3}h'_{2}\cdot b)\sigma^{-1}(h''_{4}\otimes h'_{3})\sigma(h''_{5}\otimes h'_{4})\sigma(h''_{6}h'_{5}\otimes\gamma_{-1})\otimes h''_{7}h'_{6}\gamma_{0}\\&=b''(h''_{1}\cdot b')\sigma(h''_{2}\otimes h'_{1})(h''_{3}h'_{2}\cdot b)\sigma(h''_{4}h'_{3}\otimes\gamma_{-1})\otimes h''_{5}h'_{4}\gamma_{0}\\&=(b''(h''_{1}\cdot b')\sigma(h''_{2}\otimes h'_{1})\otimes h''_{3}h'_{2})\cdot(b\otimes\gamma)\\&=((b''\otimes h'')(b'\otimes h'))\cdot(b\otimes\gamma)
\end{split}
\]
and
\[
(1_{B}\otimes1_{H})\cdot(b\otimes\gamma)=1_{B}(1_{H}\cdot b)\sigma(1_{H}\otimes\gamma_{-1})\otimes1_{H}\gamma_{0}=b\varepsilon(\gamma_{-1})\otimes\gamma_{0}=b\otimes\gamma.
\]
Thus, $C$ is a left $B\#_{\sigma}H$-module. The fact that $C$ is a right $B\#_{\sigma}H$-module follows in complete analogy. 
Moreover, the two actions are compatible, indeed 
\[
\begin{split}
((b'\otimes h')&\cdot(\beta\otimes h))\cdot(b''\otimes h'')=(b'(h'_{1}\cdot\beta)\sigma(h'_{2}\otimes h_{1})\otimes h'_{3}h_{2})\cdot(b''\otimes h'')\\&=b'(h'_{1}\cdot\beta)\sigma(h'_{2}\otimes h_{1})(h'_{3}h_{2}\cdot b'')\sigma(h'_{4}h_{3}\otimes h''_{1})\otimes h'_{5}h_{4}h''_{2}\\&=b'(h'_{1}\cdot\beta)\sigma(h'_{2}\otimes h_{1})(h'_{3}h_{2}\cdot b'')\sigma^{-1}(h'_{4}\otimes h_{3})\sigma(h'_{5}\otimes h_{4})\sigma(h'_{6}h_{5}\otimes h''_{1})\otimes h'_{7}h_{6}h''_{2}\\&=b'(h'_{1}\cdot\beta)\sigma(h'_{2}\otimes h_{1})(h'_{3}h_{2}\cdot b'')\sigma^{-1}(h'_{4}\otimes h_{3})(h'_{5}\cdot\sigma(h_{4}\otimes h''_{1}))\sigma(h'_{6}\otimes h_{5}h''_{2})\otimes h'_{7}h_{6}h''_{3}\\&=b'(h'_{1}\cdot\beta)(h'_{2}\cdot(h_{1}\cdot b''))(h'_{3}\cdot\sigma(h_{2}\otimes h''_{1}))\sigma(h'_{4}\otimes h_{3}h''_{2})\otimes h'_{5}h_{4}h''_{3}\\&=b'(h'_{1}\cdot\beta(h_{1}\cdot b'')\sigma(h_{2}\otimes h''_{1}))\sigma(h'_{2}\otimes h_{3}h''_{2})\otimes h'_{3}h_{4}h''_{3}\\&=(b'\otimes h')\cdot(\beta(h_{1}\cdot b'')\sigma(h_{2}\otimes h''_{1})\otimes h_{3}h''_{2})\\&=(b'\otimes h')\cdot((\beta\otimes h)\cdot(b''\otimes h''))
\end{split}
\]
and
\[
\begin{split}
((b'\otimes h')&\cdot(b\otimes\gamma))\cdot(b''\otimes h'')=(b'(h'_{1}\cdot b)\sigma(h'_{2}\otimes\gamma_{-1})\otimes h'_{3}\gamma_{0})\cdot(b''\otimes h'')\\&=b'(h'_{1}\cdot b)\sigma(h'_{2}\otimes\gamma_{-1})((h'_{3}\gamma_{0})_{-2}\cdot b'')\sigma((h'_{3}\gamma_{0})_{-1}\otimes h''_{1})\otimes(h'_{3}\gamma_{0})_{0}h''_{2}\\&=b'(h'_{1}\cdot b)\sigma(h'_{2}\otimes\gamma_{-3})(h'_{3}\gamma_{-2}\cdot b'')\sigma(h'_{4}\gamma_{-1}\otimes h''_{1})\otimes h'_{5}\gamma_{0}h''_{2}\\&=b'(h'_{1}\cdot b)\sigma(h'_{2}\otimes\gamma_{-5})(h'_{3}\gamma_{-4}\cdot b'')\sigma^{-1}(h'_{4}\otimes\gamma_{-3})\sigma(h'_{5}\otimes\gamma_{-2})\sigma(h'_{6}\gamma_{-1}\otimes h''_{1})\otimes h'_{7}\gamma_{0}h''_{2}\\&=b'(h'_{1}\cdot b)\sigma(h'_{2}\otimes\gamma_{-5})(h'_{3}\gamma_{-4}\cdot b'')\sigma^{-1}(h'_{4}\otimes\gamma_{-3})(h'_{5}\cdot\sigma(\gamma_{-2}\otimes h''_{1}))\sigma(h'_{6}\otimes\gamma_{-1}h''_{2})\otimes h'_{7}\gamma_{0}h''_{3}\\&=b'(h'_{1}\cdot b)(h'_{2}\cdot(\gamma_{-3}\cdot b''))(h'_{3}\cdot\sigma(\gamma_{-2}\otimes h''_{1}))\sigma(h'_{4}\otimes\gamma_{-1}h''_{2})\otimes h'_{5}\gamma_{0}h''_{3}\\&=b'(h'_{1}\cdot b(\gamma_{-2}\cdot b'')\sigma(\gamma_{-1}\otimes h''_{1}))\sigma(h'_{2}\otimes(\gamma_{0}h''_{2})_{-1})\otimes h'_{3}(\gamma_{0}h''_{2})_{0}\\&=(b'\otimes h')\cdot(b(\gamma_{-2}\cdot b'')\sigma(\gamma_{-1}\otimes h''_{1})\otimes\gamma_{0}h''_{2})\\&=(b'\otimes h')\cdot((b\otimes\gamma)\cdot(b''\otimes h'')).
\end{split}
\]
Hence $C$ is a $B\#_{\sigma}H$-bimodule. Recall that $B\#_{\sigma}H$ is a right $H$-comodule algebra with $\rho:B\#_{\sigma}H\to(B\#_{\sigma}H)\otimes H$, $b\otimes h\mapsto b\otimes h_{1}\otimes h_{2}$. Clearly, $C$ is a right $H$-comodule via 
\begin{equation}
\rho':C\to C\otimes H,\qquad \beta\otimes h+b\otimes\gamma\mapsto\beta\otimes h_{1}\otimes h_{2}+b\otimes\gamma_{0}\otimes\gamma_{1}
\end{equation}
and we show that $\rho'$ is compatible with the $B\#_{\sigma}H$-actions, so that $C$ is in $_{B\#_{\sigma}H}\mm^{H}_{B\#_{\sigma}H}$. We have
\[
\begin{split}
\rho'((\beta\otimes h+b\otimes\gamma)\cdot(b'\otimes h'))&=\rho'(\beta(h_{1}\cdot b')\sigma(h_{2}\otimes h'_{1})\otimes h_{3}h'_{2}+b(\gamma_{-2}\cdot b')\sigma(\gamma_{-1}\otimes h'_{1})\otimes\gamma_{0}h'_{2})\\&=\beta(h_{1}\cdot b')\sigma(h_{2}\otimes h'_{1})\otimes h_{3}h'_{2}\otimes h_{4}h'_{3}+b(\gamma_{-2}\cdot b')\sigma(\gamma_{-1}\otimes h'_{1})\otimes\gamma_{0}h'_{2}\otimes\gamma_{1}h'_{3}\\&=(\beta\otimes h_{1})\cdot(b'\otimes h'_{1})\otimes h_{2}h'_{2}+(b\otimes\gamma_{0})\cdot(b'\otimes h'_{1})\otimes\gamma_{1}h'_{2}
\end{split}
\]
and 
\[
\begin{split}
\rho'((b'\otimes h')\cdot(\beta\otimes h+b\otimes\gamma))&=\rho'(b'(h'_{1}\cdot\beta)\sigma(h'_{2}\otimes h_{1})\otimes h'_{3}h_{2}+b'(h'_{1}\cdot b)\sigma(h'_{2}\otimes\gamma_{-1})\otimes h'_{3}\gamma_{0})\\&=b'(h'_{1}\cdot\beta)\sigma(h'_{2}\otimes h_{1})\otimes h'_{3}h_{2}\otimes h'_{4}h_{3}+b'(h'_{1}\cdot b)\sigma(h'_{2}\otimes\gamma_{-1})\otimes h'_{3}\gamma_{0}\otimes h'_{4}\gamma_{1}\\&=(b'\otimes h'_{1})\cdot(\beta\otimes h_{1})\otimes h'_{2}h_{2}+(b'\otimes h'_{1})\cdot(b\otimes\gamma_{0})\otimes h'_{2}\gamma_{1},
\end{split}
\]
so the thesis follows.
\end{proof}

\begin{corollary}\label{cor:sigmatwisted}
Given $(\Omega^1(H),\mathrm{d}_{H})$ a bicovariant FODC on $H$ and $(\Omega^1(B),\mathrm{d}_{B})$ a $\sigma$-twisted $H$-module FODC on $B$, $(\Omega^1(B)\otimes H)\oplus(B\otimes\Omega^1(H))$ is in $_{B\#_{\sigma}H}\mm^{H}_{B\#_{\sigma}H}$ with left and right $B\#_{\sigma}H$-actions given by \eqref{leftact} and \eqref{rightact}, respectively, and right $H$-coaction given by \eqref{rightcoact}.
\end{corollary}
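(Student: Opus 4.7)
The plan is simply to invoke Lemma \ref{bimoduleconstruction} with $M:=\Omega^1(B)$ and $N:=\Omega^1(H)$, so the only work is to verify that these two objects meet the hypotheses of that lemma; once that is in place, the stated formulas \eqref{leftact}, \eqref{rightact} and \eqref{rightcoact} give the $B\#_\sigma H$-bimodule structure and the right $H$-coaction compatibly, and there is nothing further to check.

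First, I would observe that $N=\Omega^1(H)$ lies in ${}^H_H\mathcal{M}^H_H$ directly from the definition of a bicovariant FODC on $H$: the surjectivity and Leibniz axioms make $\Omega^1(H)$ an $H$-bimodule, while bicovariance by definition endows it with left and right $H$-coactions that are bimodule maps (with the diagonal coactions on $H\otimes\Omega^1(H)$ and $\Omega^1(H)\otimes H$), so the four structures assemble into an object of ${}^H_H\mathcal{M}^H_H$.

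Next, I would note that $M=\Omega^1(B)$ is a $\sigma$-twisted $B$-bimodule. This is precisely the content of Lemma \ref{twistedHmodcal}: by Definition \ref{HmodFODC}, a $\sigma$-twisted $H$-module FODC on $B$ is a FODC $(\Omega^1(B),\mathrm{d}_B)$ equipped with a linear map $\cdot\colon H\otimes\Omega^1(B)\to\Omega^1(B)$ satisfying \eqref{comp.}, \eqref{H-lin} and \eqref{dsigma}, and Lemma \ref{twistedHmodcal} then shows that these three properties are exactly what is needed to turn $\Omega^1(B)$ into a $\sigma$-twisted $B$-bimodule in the sense of Definition \ref{TwistedBimon}.

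With these two preliminary verifications done, Lemma \ref{bimoduleconstruction} applies verbatim with $M=\Omega^1(B)$ and $N=\Omega^1(H)$, yielding that $(\Omega^1(B)\otimes H)\oplus(B\otimes\Omega^1(H))$ is a $B\#_\sigma H$-bimodule with the prescribed actions \eqref{leftact}, \eqref{rightact}, and a right $H$-comodule with the prescribed coaction \eqref{rightcoact}, which is compatible with both actions, i.e., it is an object of ${}_{B\#_\sigma H}\mathcal{M}^H_{B\#_\sigma H}$. There is no real obstacle here since the corollary is essentially a packaging of the lemma; the only subtle point is remembering that the compatibility condition \eqref{dsigma} in the definition of a $\sigma$-twisted $H$-module FODC is what allows Lemma \ref{twistedHmodcal} (and hence the present corollary) to go through in the non-trivial cocycle case.
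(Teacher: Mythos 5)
Your proposal is correct and is exactly the argument the paper intends: the corollary is stated without proof precisely because it follows by applying Lemma \ref{bimoduleconstruction} with $M=\Omega^1(B)$ and $N=\Omega^1(H)$, where the hypothesis on $M$ is supplied by Lemma \ref{twistedHmodcal} (via Definition \ref{HmodFODC}) and the hypothesis on $N$ is immediate from the definition of a bicovariant FODC on $H$. Your verification of these two hypotheses, including the remark that condition \eqref{dsigma} is what makes the twisted case work, matches the paper's reasoning.
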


Finally, we construct the announced right $H$-covariant FODC on $B\#_{\sigma}H$, starting from a bicovariant FODC on $H$ and a $\sigma$-twisted $H$-module FODC on $B$. We call it the \textit{crossed product calculus} on $B\#_{\sigma}H$.

\begin{theorem}\label{canFODC}
Given $(\Omega^1(H),\mathrm{d}_{H})$ a bicovariant FODC on $H$ and $(\Omega^1(B),\mathrm{d}_{B})$ a $\sigma$-twisted $H$-module FODC on $B$ we obtain a right $H$-covariant FODC $(\Omega^1(B\#_\sigma H),\mathrm{d}_{\#_\sigma})$ on $B\#_{\sigma}H$, where 
$$
\Omega^1(B\#_{\sigma}H):=(\Omega^1(B)\otimes H)\oplus(B\otimes\Omega^1(H))\ \text{and}\ \mathrm{d}_{\#_\sigma}:B\#_{\sigma}H\to\Omega^1(B\#_{\sigma}H),\ b\otimes h\mapsto\mathrm{d}_{B}b\otimes h+b\otimes \mathrm{d}_{H}h.
$$
Moreover, the right $H$-coaction of $B\#_{\sigma}H$ is differentiable with respect to this calculus.
\end{theorem}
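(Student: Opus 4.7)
The plan is to verify the five conditions: $\Omega^1(B\#_\sigma H)$ is a $B\#_\sigma H$-bimodule and right $H$-comodule (compatibly), $\mathrm{d}_{\#_\sigma}$ satisfies the Leibniz rule, $\mathrm{d}_{\#_\sigma}$ is right $H$-colinear, the surjectivity condition $\Omega^1(B\#_\sigma H)=(B\#_\sigma H)\mathrm{d}_{\#_\sigma}(B\#_\sigma H)$ holds, and finally that $\rho$ is differentiable. The bimodule and comodule structures come for free from Corollary \ref{cor:sigmatwisted}, applied with $M:=\Omega^1(B)$ (which is a $\sigma$-twisted $B$-bimodule by Lemma \ref{twistedHmodcal}) and $N:=\Omega^1(H)$ (which lies in $^{H}_{H}\mm^{H}_{H}$ by bicovariance).

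The Leibniz rule is the core computation and the main technical obstacle. I would expand $\mathrm{d}_{\#_\sigma}((b\otimes h)\cdot(b'\otimes h'))$ by applying $\mathrm{d}_{\#_\sigma}$ to the crossed product formula \eqref{crossed}, then apply the Leibniz rules of $\mathrm{d}_B$ and $\mathrm{d}_H$ separately. Three ingredients drive the comparison with $\mathrm{d}_{\#_\sigma}(b\otimes h)\cdot(b'\otimes h')+(b\otimes h)\cdot\mathrm{d}_{\#_\sigma}(b'\otimes h')$ expanded via \eqref{leftact} and \eqref{rightact}: first, the condition $\mathrm{d}_B\circ\sigma=0$ from Definition \ref{HmodFODC} kills the unwanted term $b(h_1\cdot b')\mathrm{d}_B(\sigma(h_2\otimes h'_1))\otimes h_3 h'_2$; second, the $H$-linearity $\mathrm{d}_B(h\cdot b')=h\cdot\mathrm{d}_B b'$ matches the term containing $(h_1\cdot\mathrm{d}_B b')$ to the first summand of $(b\otimes h)\cdot(\mathrm{d}_B b'\otimes h')$; third, the bicovariance of $(\Omega^1(H),\mathrm{d}_H)$ gives $\lambda(\mathrm{d}_H h)=h_1\otimes\mathrm{d}_H h_2$ and $\rho(\mathrm{d}_H h)=\mathrm{d}_H h_1\otimes h_2$, which is precisely what is needed for the twists in the second summands of \eqref{leftact} and \eqref{rightact} to reproduce the $\sigma$- and coproduct-terms arising from $\mathrm{d}_H(h_3 h'_2)=(\mathrm{d}_H h_3)h'_2+h_3\mathrm{d}_H h'_2$.

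For surjectivity, I would use that $\mathrm{d}_{\#_\sigma}(b\otimes 1)=\mathrm{d}_B b\otimes 1$ (since $\mathrm{d}_H 1=0$) and $\mathrm{d}_{\#_\sigma}(1\otimes h)=1\otimes\mathrm{d}_H h$ (since $\mathrm{d}_B 1=0$). A short calculation using \eqref{leftact} and the normalization of $\sigma$ gives $(b\otimes 1)\mathrm{d}_{\#_\sigma}(b'\otimes h)=b\mathrm{d}_B b'\otimes h+bb'\otimes\mathrm{d}_H h$, from which any $\beta\otimes h$ and any $b\otimes\gamma$ can be obtained by taking suitable differences and left multiplications, so every homogeneous summand of $(\Omega^1(B)\otimes H)\oplus(B\otimes\Omega^1(H))$ is generated.

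Right $H$-covariance of $\mathrm{d}_{\#_\sigma}$ reduces to $\rho'(\mathrm{d}_B b\otimes h)=\mathrm{d}_B b\otimes h_1\otimes h_2$ and $\rho'(b\otimes\mathrm{d}_H h)=b\otimes\mathrm{d}_H h_1\otimes h_2$, both immediate from the definition of $\rho'$ together with $\rho(\mathrm{d}_H h)=\mathrm{d}_H h_1\otimes h_2$. For the differentiability of $\rho$, I invoke the criterion from \cite[Theorem 2.8]{PS}: having established right $H$-covariance, it suffices to exhibit a bimodule right-colinear map $\pi\colon\Omega^1(B\#_\sigma H)\to(B\#_\sigma H)\square_H\Omega^1(H)$ with $\pi\circ\mathrm{d}_{\#_\sigma}=(\mathrm{Id}\otimes\mathrm{d}_H)\circ\rho$. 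I would define $\pi$ to vanish on the summand $\Omega^1(B)\otimes H$ and to be given by $\pi(b\otimes\gamma):=(b\otimes\gamma_{-1})\otimes\gamma_0$ on $B\otimes\Omega^1(H)$, where $\gamma_{-1}\otimes\gamma_0=\lambda(\gamma)$; the image lands in the cotensor product by bicovariance of $\Omega^1(H)$, the identity with $(\mathrm{Id}\otimes\mathrm{d}_H)\circ\rho$ is clear on generators, and bimodularity of $\pi$ follows from the same bicovariance identity used in the Leibniz step. I expect the sign-free but lengthy Leibniz verification to be the only nontrivial step; everything else is essentially bookkeeping.
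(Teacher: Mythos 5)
Your proposal is correct and, for the structural part and the Leibniz rule, runs along the same lines as the paper: the bimodule/comodule structure is delegated to Lemma~\ref{twistedHmodcal} and Corollary~\ref{cor:sigmatwisted}, and the three ingredients you isolate for the Leibniz verification ($\mathrm{d}_B\circ\sigma=0$ killing the term $b(h_1\cdot b')\mathrm{d}_B(\sigma(h_2\otimes h'_1))\otimes h_3h'_2$, the identity $\mathrm{d}_B(h\cdot b)=h\cdot\mathrm{d}_Bb$, and bicovariance of $\mathrm{d}_H$ in the form $\lambda(\mathrm{d}_Hh)=h_1\otimes\mathrm{d}_Hh_2$, $\rho(\mathrm{d}_Hh)=\mathrm{d}_Hh_1\otimes h_2$) are exactly the ones the paper's computation turns on. Two remarks. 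On surjectivity, your identity $(b\otimes 1_H)\,\mathrm{d}_{\#_\sigma}(b'\otimes h)=b\mathrm{d}_Bb'\otimes h+bb'\otimes\mathrm{d}_Hh$ and the difference trick do produce all of $\Omega^1(B)\otimes H$, as in the paper; but the phrase ``suitable left multiplications'' conceals the one genuinely cocycle-specific step: a naive left multiplication gives $(b\otimes h)\cdot(1_B\otimes\mathrm{d}_Hh')=b\sigma(h_1\otimes h'_1)\otimes h_2\mathrm{d}_Hh'_2$, which carries an unwanted $\sigma$-twist, and the paper removes it by choosing the multiplier $b\sigma^{-1}(h_1\otimes h'_1)\otimes h_2$, so that the convolution invertibility of $\sigma$ enters essentially here (in the smash-product case this subtlety is invisible). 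Your claim is true, but that multiplier should be exhibited explicitly. On differentiability of $\rho$ you genuinely diverge: the paper constructs the lift $\widehat{\rho}\colon\Omega^1(B\#_\sigma H)\to\Omega^1((B\#_\sigma H)\otimes H)$, $\beta\otimes h+b\otimes\gamma\mapsto\beta\otimes h_1\otimes h_2+b\otimes\gamma_0\otimes\gamma_1+b\otimes\gamma_{-1}\otimes\gamma_0$, checks bimodularity and $\widehat{\rho}\circ\mathrm{d}_{\#_\sigma}=\mathrm{d}_\otimes\circ\rho$, and reads off the coaction $\rho'=\pi_1\circ\widehat{\rho}$, whereas you invoke the criterion of \cite[Theorem 2.8]{PS} (recalled in the paper's preliminaries) with the cotensor-valued map $\pi(b\otimes\gamma)=(b\otimes\gamma_{-1})\otimes\gamma_0$, $\pi|_{\Omega^1(B)\otimes H}=0$. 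This is equivalent and marginally more economical: your $\pi$ is precisely the composite $\pi'_2\circ\widehat{\rho}$ appearing in the proof of Theorem~\ref{Theorem:QPB}, its image lands in the cotensor product by coassociativity of $\lambda$, the chain rule $\pi\circ\mathrm{d}_{\#_\sigma}=(\mathrm{Id}\otimes\mathrm{d}_H)\circ\rho$ follows from left colinearity of $\mathrm{d}_H$, and bimodularity of $\pi$ is the same cocycle manipulation the paper performs for $\widehat{\rho}$. What the paper's explicit $\widehat{\rho}$ buys is reusability: it is the object later exploited for the vertical map $\mathrm{ver}=\phi\circ\pi'_2\circ\widehat{\rho}$ in the quantum principal bundle construction, so with your route that lift would still have to be extracted from $\pi$ afterwards.
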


\begin{proof}
By Corollary \ref{cor:sigmatwisted} we know that $\Omega^1(B\#_{\sigma}H):=(\Omega^1(B)\otimes H)\oplus(B\otimes\Omega^1(H))$ is in $_{B\#_{\sigma}H}\mm^{H}_{B\#_{\sigma}H}$.
In a next step 
we prove that $\mathrm{d}_{\#_\sigma}$, as defined above, satisfies the Leibniz rule. Indeed, we have 
\[
\begin{split}
(\mathrm{d}_{\#_\sigma}(b\otimes h))\cdot&(b'\otimes h')+(b\otimes h)\cdot(\mathrm{d}_{\#_\sigma}(b'\otimes h'))\\&=(\mathrm{d}_{B}b\otimes h+b\otimes\mathrm{d}_{H}h)\cdot(b'\otimes h')+(b\otimes h)\cdot(\mathrm{d}_{B}b'\otimes h'+b'\otimes\mathrm{d}_{H}h')\\&=(\mathrm{d}_{B}b)(h_{1}\cdot b')\sigma(h_{2}\otimes h'_{1})\otimes h_{3}h'_{2}+b((\mathrm{d}_{H}h)_{-2}\cdot b')\sigma((\mathrm{d}_{H}h)_{-1}\otimes h'_{1})\otimes(\mathrm{d}_{H}h)_{0}h'_{2}\\&\quad+b(h_{1}\cdot \mathrm{d}_{B}b')\sigma(h_{2}\otimes h'_{1})\otimes h_{3}h'_{2}+b(h_{1}\cdot b')\sigma(h_{2}\otimes(\mathrm{d}_{H}h')_{-1})\otimes h_{3}(\mathrm{d}_{H}h')_{0}\\&=(\mathrm{d}_{B}b)(h_{1}\cdot b')\sigma(h_{2}\otimes h'_{1})\otimes h_{3}h'_{2}+b(h_{1}\cdot b')\sigma(h_{2}\otimes h'_{1})\otimes(\mathrm{d}_{H}h_{3})h'_{2}\\&\quad+b\mathrm{d}_{B}(h_{1}\cdot b')\sigma(h_{2}\otimes h'_{1})\otimes h_{3}h'_{2}+b(h_{1}\cdot b')\sigma(h_{2}\otimes h'_{1})\otimes h_{3}\mathrm{d}_{H}h'_{2}\\&=b(h_{1}\cdot b')\sigma(h_{2}\otimes h'_{1})\otimes \mathrm{d}_{H}(h_{3}h'_{2})+\mathrm{d}_{B}(b(h_{1}\cdot b'))\sigma(h_{2}\otimes h'_{1})\otimes h_{3}h'_{2}\\&=b(h_{1}\cdot b')\sigma(h_{2}\otimes h'_{1})\otimes \mathrm{d}_{H}(h_{3}h'_{2})+\mathrm{d}_{B}(b(h_{1}\cdot b')\sigma(h_{2}\otimes h'_{1}))\otimes h_{3}h'_{2}\\&=\mathrm{d}_{\#_\sigma}((b\otimes h)(b'\otimes h')),
\end{split}
\]
where we used the fact that $\mathrm{d}_{H}$ is left $H$-colinear and \eqref{H-lin} in the third equality, Leibniz rules of $\mathrm{d}_{H}$ and $\mathrm{d}_{B}$ in the fourth and \eqref{dsigma} in the fifth. \medskip

Furthermore, $\Omega^1(B\#_{\sigma}H)$ is generated by $\mathrm{d}_{\#_\sigma}(B\#_{\sigma}H)$ as left $B\#_{\sigma}H$-module. In order to prove this it is sufficient to show that the left $B\#_{\sigma}H$-submodule of $\Omega^1(B\#_\sigma H)$ generated by $\mathrm{d}_{\#_\sigma}(B\#_{\sigma}H)$ contains $b\mathrm{d}_{B}b'\otimes h$ and $b\otimes h\mathrm{d}_{H}h'$ for every $b,b'\in B$ and $h,h'\in H$. The first element is obtained by 
\[
\begin{split}
(b\otimes1_{H})\cdot &\mathrm{d}_{\#_\sigma}(b'\otimes h)-(bb'\otimes1_{H})\cdot\mathrm{d}_{\#_\sigma}(1_{B}\otimes h)\\
&=(b\otimes1_{H})\cdot(\mathrm{d}_{B}b'\otimes h+b'\otimes \mathrm{d}_{H}h)-(bb'\otimes1_{H})\cdot(1_{B}\otimes \mathrm{d}_{H}h)\\&=b(1_{H}\cdot \mathrm{d}_{B}b')\sigma(1_{H}\otimes h_{1})\otimes1_{H}h_{2}+b(1_{H}\cdot b')\sigma(1_{H}\otimes(\mathrm{d}_{H}h)_{-1})\otimes1_{H}(\mathrm{d}_{H}h)_{0}\\&\quad-bb'(1_{H}\cdot1_{B})\sigma(1_{H}\otimes(\mathrm{d}_{H}h)_{-1})\otimes1_{H}(\mathrm{d}_{H}h)_{0}\\&=b\mathrm{d}_{B}b'\varepsilon(h_{1})\otimes h_{2}=b\mathrm{d}_{B}b'\otimes h,
\end{split}
\]
while for the second element we have that
\[
\begin{split}
(b\sigma^{-1}(h_{1}\otimes h'_{1})\otimes h_{2})\cdot\mathrm{d}_{\#_\sigma}(1_{B}\otimes h'_{2})&=(b\sigma^{-1}(h_{1}\otimes h'_{1})\otimes h_{2})\cdot(1_{B}\otimes\mathrm{d}_{H}h'_{2})\\&=b\sigma^{-1}(h_{1}\otimes h'_{1})(h_{2}\cdot1_{B})\sigma(h_{3}\otimes(\mathrm{d}_{H}h'_{2})_{-1})\otimes h_{4}(\mathrm{d}_{H}h'_{2})_{0}\\&=b\sigma^{-1}(h_{1}\otimes h'_{1})\varepsilon(h_{2})\sigma(h_{3}\otimes h'_{2})\otimes h_{4}\mathrm{d}_{H}h'_{3}\\&=b\sigma^{-1}(h_{1}\otimes h'_{1})\sigma(h_{2}\otimes h'_{2})\otimes h_{3}\mathrm{d}_{H}h'_{3}\\&=b\varepsilon(h_{1})\varepsilon(h'_{1})\otimes h_{2}\mathrm{d}_{H}h'_{2}=b\otimes h\mathrm{d}_{H}h',
\end{split}
\]
using again that $\mathrm{d}_{H}$ is left $H$-colinear. Hence, the surjectivity condition is shown and $(\Omega^1(B\#_{\sigma}H),\mathrm{d}_{\#_\sigma})$ is a FODC on the crossed product algebra $B\#_{\sigma}H$. 

Now, denoting by $\rho$ the right $H$-coaction of $B\#_{\sigma}H$, define $\widehat{\rho}:\Omega^1(B\#_{\sigma}H)\to\Omega^1((B\#_{\sigma}H)\otimes H)$ as $\beta\otimes h+b\otimes\gamma\mapsto\beta\otimes h_{1}\otimes h_{2}+b\otimes\gamma_{0}\otimes\gamma_{1}+b\otimes\gamma_{-1}\otimes\gamma_{0}$. This is a morphism in $_{B\#_{\sigma}H}\mm_{B\#_{\sigma}H}$, since
\[
\begin{split}
\widehat{\rho}((b'\otimes h')\cdot(\beta\otimes h+b\otimes\gamma))
&=\widehat{\rho}(b'(h'_{1}\cdot\beta)\sigma(h'_{2}\otimes h_{1})\otimes h'_{3}h_{2}+b'(h'_{1}\cdot b)\sigma(h'_{2}\otimes\gamma_{-1})\otimes h'_{3}\gamma_{0})
\\&=b'(h'_{1}\cdot\beta)\sigma(h'_{2}\otimes h_{1})\otimes h'_{3}h_{2}\otimes h'_{4}h_{3}+b'(h'_{1}\cdot b)\sigma(h'_{2}\otimes\gamma_{-1})\otimes h'_{3}\gamma_{0}\otimes h'_{4}\gamma_{1}\\
&\quad+b'(h'_{1}\cdot b)\sigma(h'_{2}\otimes\gamma_{-2})\otimes h'_{3}\gamma_{-1}\otimes h'_{4}\gamma_{0}\\&=(b'\otimes h'_{1})(\beta\otimes h_{1})\otimes h'_{2}h_{2}+(b'\otimes h'_{1})(b\otimes\gamma_{0})\otimes h'_{2}\gamma_{1}+(b'\otimes h'_{1})(b\otimes\gamma_{-1})\otimes h'_{2}\gamma_{0}\\
&=(b'\otimes h')\cdot\widehat{\rho}(\beta\otimes h+b\otimes\gamma)
\end{split}
\]
and, analogously, $\widehat{\rho}((\beta\otimes h+b\otimes\gamma)\cdot(b'\otimes h'))=\widehat{\rho}(\beta\otimes h+b\otimes\gamma)\cdot(b'\otimes h')$. Furthermore, we have
\[
\begin{split}
    \widehat{\rho}\mathrm{d}_{\#_\sigma}(b\otimes h)&=\mathrm{d}_{B}b\otimes h_{1}\otimes h_{2}+b\otimes(\mathrm{d}_{H}h)_{0}\otimes(\mathrm{d}_{H}h)_{1}+b\otimes(\mathrm{d}_{H}h)_{-1}\otimes(\mathrm{d}_{H}h)_{0}\\&=\mathrm{d}_{B}b\otimes h_{1}\otimes h_{2}+b\otimes\mathrm{d}_{H}h_{1}\otimes h_{2}+b\otimes h_{1}\otimes\mathrm{d}_{H}h_{2}=\mathrm{d}_{\#_\sigma}(b\otimes h_{1})\otimes h_{2}+b\otimes h_{1}\otimes\mathrm{d}_{H}h_{2}\\&=\mathrm{d}_{\otimes}\rho(b\otimes h),
\end{split}
\]
hence $\rho$ is differentiable. Then, $(\Omega^1(B\#_{\sigma}H),\mathrm{d}_{\#_\sigma})$ is right $H$-covariant such that, denoting the canonical projection by $\pi_{1}:\Omega^1((B\#_{\sigma}H)\otimes H)\to\Omega^1(B\#_{\sigma}H)\otimes H$, the right $H$-coaction on $\Omega^1(B\#_{\sigma}H)$ is given by $\rho':=\pi_{1}\circ\widehat{\rho}:\beta\otimes h+b\otimes\gamma\mapsto\beta\otimes h_{1}\otimes h_{2}+b\otimes\gamma_{0}\otimes\gamma_{1}$. On the other hand, the right $H$-covariance of the calculus also follows explicitly from the computation 
\[
\begin{split}
\rho'\mathrm{d}_{\#_\sigma}(b\otimes h)&=\rho'(\mathrm{d}_{B}b\otimes h+b\otimes\mathrm{d}_{H}h)=\mathrm{d}_{B}b\otimes h_{1}\otimes h_{2}+b\otimes(\mathrm{d}_{H}h)_{0}\otimes(\mathrm{d}_{H}h)_{1}=\mathrm{d}_{B}b\otimes h_{1}\otimes h_{2}+b\otimes\mathrm{d}_{H}h_{1}\otimes h_{2}\\&=\mathrm{d}_{\#_\sigma}(b\otimes h_{1})\otimes h_{2}=(\mathrm{d}_{\#_\sigma}\otimes\mathrm{Id}_{H})\rho(b\otimes h),
\end{split}
\]
using that $\mathrm{d}_{H}$ is right $H$-colinear. 
\end{proof}
As one easily verifies, the above crossed product calculus $(\Omega^1(B\#_\sigma H),\mathrm{d}_{\#_\sigma})$ reduces to the smash product calculus $(\Omega^1(B\# H),\mathrm{d}_\#)$ of \cite[Theorem 2.7]{PS} in case of a trivial extension, where $\sigma$ is the trivial $2$-cocycle.

We show that the condition $\mathrm{d}_{B}\circ\sigma=0$ is necessary in case we want $(\Omega^1(B\#_\sigma H),\mathrm{d}_{\#_\sigma})$ to be a FODC on $B\#_{\sigma}H$ starting from a bicovariant FODC $(\Omega^1(H),\mathrm{d}_{H})$ on $H$ and a FODC $(\Omega^1(B),\mathrm{d}_{B})$ on $B$ such that $\Omega^1(B)$ is a $\sigma$-twisted $B$-bimodule.

\begin{proposition}\label{prop:dsigma}
Let $(\Omega^1(B),\mathrm{d}_{B})$ be a FODC on $B$ such that $\Omega^1(B)$ is a $\sigma$-twisted $B$-bimodule 
and $(\Omega^1(H),\mathrm{d}_{H})$ a bicovariant FODC on $H$ such that $(\Omega^1(B\#_\sigma H),\mathrm{d}_{\#_\sigma})$, as defined in Theorem \ref{canFODC}, is a FODC on $B\#_{\sigma}H$. Then $\mathrm{d}_{B}\circ\sigma=0$.
\end{proposition}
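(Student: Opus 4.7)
The plan is to apply the Leibniz rule of $\mathrm{d}_{\#_\sigma}$ to the product $(1_B\otimes h)\cdot(1_B\otimes h')$ for arbitrary $h,h'\in H$, and then to extract from the resulting identity an equation that lives in the component $\Omega^1(B)\otimes H$ of the direct sum $\Omega^1(B\#_\sigma H)=(\Omega^1(B)\otimes H)\oplus(B\otimes\Omega^1(H))$. Since the crossed product multiplication gives $(1_B\otimes h)\cdot(1_B\otimes h')=\sigma(h_1\otimes h'_1)\otimes h_2h'_2$, the left-hand side of the Leibniz identity equals
$$
\mathrm{d}_B\sigma(h_1\otimes h'_1)\otimes h_2h'_2+\sigma(h_1\otimes h'_1)\otimes \mathrm{d}_H(h_2h'_2),
$$
where the first summand sits in $\Omega^1(B)\otimes H$ and the second in $B\otimes\Omega^1(H)$.

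On the right-hand side, the two terms $(1_B\otimes\mathrm{d}_Hh)\cdot(1_B\otimes h')$ and $(1_B\otimes h)\cdot(1_B\otimes\mathrm{d}_Hh')$ are computed by means of the $B\#_\sigma H$-bimodule structure \eqref{leftact}, \eqref{rightact}. Using the left $H$-colinearity of $\mathrm{d}_H$ (so that $(\mathrm{d}_Hh)_{-1}\otimes(\mathrm{d}_Hh)_0=h_1\otimes\mathrm{d}_Hh_2$), together with the counitality of the measure on $1_B$, these terms reduce to $\sigma(h_1\otimes h'_1)\otimes(\mathrm{d}_Hh_2)h'_2$ and $\sigma(h_1\otimes h'_1)\otimes h_2\mathrm{d}_Hh'_2$ respectively. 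Summing and applying the Leibniz rule for $\mathrm{d}_H$ yields exactly $\sigma(h_1\otimes h'_1)\otimes\mathrm{d}_H(h_2h'_2)$, which is entirely contained in the vertical summand $B\otimes\Omega^1(H)$.

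Comparing both sides of the Leibniz identity along the direct sum decomposition, the vertical summands cancel and one is left with
$$
\mathrm{d}_B\sigma(h_1\otimes h'_1)\otimes h_2h'_2=0 \quad\text{in }\Omega^1(B)\otimes H.
$$
Applying $\mathrm{Id}_{\Omega^1(B)}\otimes\varepsilon$ and using counity of the comultiplication then gives $\mathrm{d}_B\sigma(h\otimes h')=0$ for all $h,h'\in H$, which is the claim.

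The only point that needs care is the cleanness of the direct sum decomposition of $\Omega^1(B\#_\sigma H)$, since the extraction of the horizontal component relies on it; this is, however, a genuine decomposition of vector spaces by construction, so no real obstacle arises. The computation itself is a straightforward parallel of the Leibniz-rule check performed in the proof of Theorem \ref{canFODC}, specialised to $b=b'=1_B$, which is precisely the situation in which the condition $\mathrm{d}_B\circ\sigma=0$ was invoked there.
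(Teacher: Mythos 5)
Your proposal is correct and follows essentially the same route as the paper's own proof: both apply the Leibniz rule of $\mathrm{d}_{\#_\sigma}$ to $(1_B\otimes h)\cdot(1_B\otimes h')=\sigma(h_1\otimes h'_1)\otimes h_2h'_2$, observe via left $H$-colinearity of $\mathrm{d}_H$ that the right-hand side lands entirely in $B\otimes\Omega^1(H)$, and conclude $\mathrm{d}_B\sigma(h_1\otimes h'_1)\otimes h_2h'_2=0$ from the direct sum decomposition. Your explicit application of $\mathrm{Id}_{\Omega^1(B)}\otimes\varepsilon$ at the end is just a spelled-out version of the final step the paper leaves implicit.
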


\begin{proof}
Since $\Omega^1(B)$ is a $\sigma$-twisted $B$-bimodule and $\Omega^1(H)$ is in $^{H}_{H}\mm^{H}_{H}$, then $\Omega^1(B\#_\sigma H)$ is in $_{B\#_{\sigma}H}\mm^{H}_{B\#_{\sigma}H}$ by Lemma \ref{bimoduleconstruction}. Since $\mathrm{d}_{\#_\sigma}$ satisfies the Leibniz rule by assumption we obtain
\[
\begin{split}
\mathrm{d}_{B}(\sigma(h_{1}\otimes h'_{1}))\otimes h_{2}h'_{2}+\sigma(h_{1}\otimes h'_{1})\otimes\mathrm{d}_{H}(h_{2}h'_{2})&=\mathrm{d}_{\#_\sigma}(\sigma(h_{1}\otimes h'_{1})\otimes h_{2}h'_{2})=\mathrm{d}_{\#_\sigma}((1_{B}\otimes h)(1_{B}\otimes h'))\\&=(\mathrm{d}_{\#_\sigma}(1_{B}\otimes h))(1_{B}\otimes h')+(1_{B}\otimes h)\mathrm{d}_{\#_\sigma}(1_{B}\otimes h')\\&=\sigma(h_{1}\otimes h'_{1})\otimes(\mathrm{d}_{H}h_{2})h'_{2}+\sigma(h_{1}\otimes h'_{1})\otimes h_{2}\mathrm{d}_{H}h'_{2}\\&=\sigma(h_{1}\otimes h'_{1})\otimes\mathrm{d}_{H}(h_{2}h'_{2}),
\end{split}
\]
so that $\mathrm{d}_{B}(\sigma(h_{1}\otimes h'_{1}))\otimes h_{2}h'_{2}=0$ and then $\mathrm{d}_{B}(\sigma(h\otimes h'))=0$, for every $h,h'\in H$.
\end{proof}

We conclude this section by discussing an example of the crossed product calculus construction. It is based on a cleft extension which is not trivial, with a non-trivial $2$-cocycle, and thus goes beyond the smash product calculus approach. More examples, based on pointed Hopf algebras, will be outlined in Section \ref{Sec4}.

\begin{example}\label{ex:torus}
We now exemplify the construction of the crossed product calculus given in Theorem \ref{canFODC} on the noncommutative 2-torus. Here $A=
\mathbb{C}[u,u^{-1},v,v^{-1}]/\langle vu-e^{i\theta} uv\rangle$ is the algebra generated by two invertible generators $u,v$, modulo the relation $vu=e^{i\theta}uv$, where $\theta\in\mathbb{R}$. Then, $A$ is a right $H$-comodule algebra, where $H=\mathbb{C}[t,t^{-1}]$ is the Hopf algebra with coproduct, counit and antipode given by $\Delta(t^{n})=t^{n}\otimes t^{n}$, $\varepsilon(t^{n})=1$ and $S(t^{n})=t^{-n}$, for $n\in\mathbb{Z}$. The coaction $\rho:A\to A\otimes H$ is defined on generators by $\rho(u):=u\otimes t$ and $\rho(v):=v\otimes t^{-1}$ and extended as an algebra morphism. The subalgebra of coinvariants is given by $B=A^{\mathrm{co}H}=\mathrm{span}_{\mathbb{C}}\{(uv)^{k}\ |\ k\in\mathbb{Z}\}$ and $B\subseteq A$ forms a cleft extension with cleaving map defined as
\[
j:H\to A,\ t^{k}\mapsto u^{k},\ t^{-k}\mapsto v^{k},
\]
for all $k\geq0$, see \cite{KLS}. In particular, $j$ is not an algebra morphism and thus $B\subseteq A$ is not a trivial extension. The convolution inverse $j^{-1}:H\to A$ is given by $j^{-1}(t^{k})=u^{-k}$ and $j^{-1}(t^{-k})=v^{-k}$, for all $k\geq 0$. Hence we obtain $\cdot:H\otimes B\to B$, $h\otimes b\mapsto j(h_{1})bj^{-1}(h_{2})$ and $\sigma:H\otimes H\to B$, $h\otimes h'\mapsto j(h_{1})j(h'_{1})j^{-1}(h_{2}h'_{2})$ such that $A\cong B\#_{\sigma}H$ as right $H$-comodule algebras. More explicitly, for $k\geq0$ and $l\in\mathbb{Z}$, we have
\[
t^{k}\cdot(uv)^{l}=
u^{k}(uv)^{l}u^{-k}=e^{-i\theta kl}(uv)^{l},\quad t^{-k}\cdot(uv)^{l}=
v^{k}(uv)^{l}v^{-k}=e^{i\theta kl}(uv)^{l},
\]
i.e., $t^{k}\cdot(uv)^{l}=e^{-i\theta kl}(uv)^{l}$, for all $k,l\in\mathbb{Z}$. Moreover, for all $k,s\geq0$, we have
\[
\sigma(t^{k}\otimes t^{s})=u^{k}u^{s}u^{-(k+s)}=1,\qquad \sigma(t^{-k}\otimes t^{-s})=v^{k}v^{s}v^{-(k+s)}=1,
\]
while
\[
\sigma(t^{k}\otimes t^{-s})=u^{k}v^{s}j^{-1}(t^{k-s})=\begin{cases}
    u^{k}v^{s}u^{s-k}=
    e^{-i\theta s(k-s)}u^{s}v^{s}&\text{if}\ s\leq k\\
    u^{k}v^{k}&\text{if}\ s>k
\end{cases}
\]
and
\[
\sigma(t^{-k}\otimes t^{s})=v^{k}u^{s}j^{-1}(t^{s-k})=\begin{cases}
e^{i\theta k^{2}}u^{k}v^{k}&\text{if}\ k\leq s\\
    v^{k}u^{s}v^{s-k}=
e^{i\theta sk}u^{s}v^{s}&\text{if}\ k>s
\end{cases}.
\]
We now construct a crossed product calculus on $B\#_{\sigma}H$. Let us observe that, for a FODC $(\Omega^1(B),\mathrm{d}_{B})$ on $B$, the linear map $\cdot:H\otimes\Omega^1(B)\to\Omega^1(B)$ given by
\[
t^{k}\cdot((uv)^{l}\mathrm{d}_{B}((uv)^{s}))=(t^{k}\cdot(uv)^{l})\mathrm{d}_{B}(t^{k}\cdot(uv)^{s})=e^{-i\theta k(l+s)}(uv)^{l}\mathrm{d}_{B}((uv)^{s})
\]
is well-defined for all $k,l,s\in\mathbb{Z}$. Hence, by Lemma \ref{twistedHmodcal}, $(\Omega^1(B),\mathrm{d}_{B})$ is a $\sigma$-twisted $H$-module calculus on $B$ if and only if $\mathrm{d}_{B}\circ\sigma=0$ is satisfied. Let us assume that $\mathrm{d}_{B}\circ\sigma=0$ holds true. Since
$\sigma(t\ot t^{-1})=uv$, we obtain $\mathrm{d}_{B}((uv))=0$ and then $\mathrm{d}_{B}((uv)^{l})=0$ for all $l\geq0$. But now, from the Leibniz rule we obtain
\[
0=\mathrm{d}_{B}((uv)(uv)^{-1})=(uv)\mathrm{d}_{B}((uv)^{-1}),\qquad 0=\mathrm{d}_{B}((uv)^{-1}(uv))=\mathrm{d}_{B}((uv)^{-1})(uv).
\]
Thus, $\mathrm{d}_{B}((uv)^{-1})=(uv)^{-1}\cdot\big((uv)\mathrm{d}_{B}((uv)^{-1})\big)=0$ and so $\mathrm{d}_{B}((uv)^{l})=0$ for all $l\leq0$. Hence $(\Omega^1(B),\mathrm{d}_{B})$ has to be the zero calculus on $B$. As a consequence, $\Omega^1(B\#_{\sigma}H)=B\otimes\Omega^1(H)$ and $\mathrm{d}_{\#_{\sigma}}=\mathrm{Id}_{B}\otimes\mathrm{d}_{H}$, where $(\Omega^1(H),\mathrm{d}_{H})$ is a bicovariant FODC on $H$. We choose the bicovariant calculus on $H$ given in \cite[Example 1.11]{BM}. Fix a complex number $q\not=0,1,-1$ and define $\Omega^1(H)$ as the left $H$-module generated by $\mathrm{d}_{H}t$ such that $\mathrm{d}_{H}t\cdot t^{n}:=q^{n}t^{n}\mathrm{d}_{H}t$, for all $n\in\mathbb{Z}$, and $\mathrm{d}_{H}:H\to\Omega^1(H)$ as $\mathrm{d}_{H}(f(t)):=\frac{f(qt)-f(t)}{t(q-1)}\mathrm{d}_{H}t$ for any rational polynomial $f$ in $t$. The right and left $H$-coactions are simply given by $\rho:\Omega^1(H)\to\Omega^1(H)\otimes H$, $t^{n}\mathrm{d}_{H}t\mapsto t^{n}\mathrm{d}_{H}t\otimes t^{n+1}$ and $\lambda:\Omega^1(H)\to H\otimes\Omega^1(H)$, $t^{n}\mathrm{d}_{H}t\mapsto t^{n+1}\otimes t^{n}\mathrm{d}_{H}t$, respectively. Hence the left and right $B\#_{\sigma}H$-actions on $\Omega^1(B\#_{\sigma}H)$ are given by
\begin{align*}
    ((uv)^{l}\otimes t^{k})\cdot((uv)^{m}\otimes t^{n}\mathrm{d}_{H}t)
    &=(uv)^{l}(t^{k}\cdot(uv)^{m})\sigma(t^{k}\otimes t^{n+1})\otimes t^{k}t^{n}\mathrm{d}_{H}t\\
    &=-e^{i\theta km}(uv)^{l+m}\sigma(t^{k}\otimes t^{n+1})\otimes t^{k+n}\mathrm{d}_{H}t
\end{align*}
and
\begin{align*}
    ((uv)^{m}\otimes t^{n}\mathrm{d}_{H}t)\cdot((uv)^{l}\otimes t^{k})
    &=(uv)^{m}(t^{n+1}\cdot(uv)^{l})\sigma(t^{n+1}\otimes t^{k})\otimes(t^{n}\mathrm{d}_{H}t)\cdot t^{k}\\
    &=q^{k}e^{-i\theta(n+1)l}(uv)^{l+m}\sigma(t^{n+1}\otimes t^{k})\otimes t^{k+n}\mathrm{d}_{H}t,
\end{align*}
for all $l,k,m,n\in\mathbb{Z}$, where $\sigma$ assumes the previous values. Moreover, the right $H$-coaction on $\Omega^1(B\#_{\sigma}H)$ is given by $(uv)^{l}\otimes t^{n}\mathrm{d}_{H}t\mapsto (uv)^{l}\otimes t^{n}\mathrm{d}_{H}t\otimes t^{n+1}$. Even if the $\sigma$-twisted $H$-module calculus on $B$ is the trivial one, the non-trivial $\sigma$ appears in the structure of the crossed product calculus on $B\#_{\sigma}H$. In the last section we will discuss a class of examples of crossed product calculi for which the $\sigma$-twisted $H$-module calculus on $B$ is not the trivial one.
\end{example}

\subsection{A classification of the smash product calculus}\label{Sec:ClasSmash}

In this subsection we consider a right $H$-comodule algebra $A$ such that $B:=A^{\mathrm{co}H}\subseteq A$ is a trivial extension. We denote the algebra inclusion by $\iota:B\to A$ and the cleaving map by $j:H\to A$. In the following, we characterize the calculi on $A$ which are isomorphic to the smash product calculus $(\Omega^1(B\# H),\mathrm{d}_\#)$ via the differential of the algebra isomorphism $A\cong B\#H$. Recall that the calculus $(\Omega^1(B\# H),\mathrm{d}_\#)$ is obtained from Theorem \ref{canFODC} if one considers the trivial $2$-cocycle and it corresponds to the calculus given in \cite[Theorem 2.7]{PS}. 
Calculi on smash product algebras have also been considered in \cite{Ryan} and \cite[Section 4]{Majid2020}.

\begin{lemma}\label{iota12}
The algebra morphisms $\iota_1:B\to B\#H$, $b\mapsto b\otimes1_{H}$ and $\iota_2:H\to B\#H$, $h\mapsto1_{B}\otimes h$ are differentiable.
\end{lemma}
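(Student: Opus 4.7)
The plan is to directly exhibit the bimodule maps $\widehat{\iota_1}$ and $\widehat{\iota_2}$ predicted by the uniqueness formula stated right after the definition of differentiability, and verify the required compatibilities. Since $\sigma$ is trivial here, the crossed product actions on $\Omega^1(B\#H)$ simplify considerably: for all $b'\in B$, $h'\in H$, $\beta\in\Omega^1(B)$, $b\in B$, $\gamma\in\Omega^1(H)$, formulas \eqref{leftact}--\eqref{rightact} reduce to
\begin{align*}
(b'\otimes h')\cdot(\beta\otimes h+b\otimes\gamma)&=b'(h'_1\cdot\beta)\otimes h'_2 h+b'(h'_1\cdot b)\otimes h'_2\gamma,\\
(\beta\otimes h+b\otimes\gamma)\cdot(b'\otimes h')&=\beta(h_1\cdot b')\otimes h_2 h'+b(\gamma_{-1}\cdot b')\otimes\gamma_0 h'.
\end{align*}

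For $\iota_1$ I would set $\widehat{\iota_1}\colon\Omega^1(B)\to\Omega^1(B\#H)$, $\beta\mapsto\beta\otimes 1_H$. Since $\mathrm{d}_H 1_H=0$, the compatibility $\widehat{\iota_1}\circ\mathrm{d}_B=\mathrm{d}_{\#}\circ\iota_1$ is immediate from the definition of $\mathrm{d}_\#$. For the $B$-bimodule property, the $B$-actions on $\Omega^1(B\#H)$ induced by $\iota_1$ come from multiplying by $b\otimes 1_H$; plugging $h'=1_H$ or $h=1_H$ into the simplified formulas above yields $(b\otimes 1_H)\cdot(\beta\otimes 1_H)=b\beta\otimes 1_H$ and $(\beta\otimes 1_H)\cdot(b\otimes 1_H)=\beta b\otimes 1_H$, so $\widehat{\iota_1}$ is $B$-bilinear.

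For $\iota_2$ I would set $\widehat{\iota_2}\colon\Omega^1(H)\to\Omega^1(B\#H)$, $\gamma\mapsto 1_B\otimes\gamma$. Since $\mathrm{d}_B 1_B=0$, the compatibility $\widehat{\iota_2}\circ\mathrm{d}_H=\mathrm{d}_{\#}\circ\iota_2$ is again immediate. For the $H$-bimodule property, the $H$-actions on $\Omega^1(B\#H)$ induced by $\iota_2$ come from multiplying by $1_B\otimes h$; plugging $b'=1_B$ and $b=1_B$ into the simplified formulas and using $1_H\cdot 1_B=1_B$, $\varepsilon(\gamma_{-1})\gamma_0=\gamma$, one finds $(1_B\otimes h)\cdot(1_B\otimes\gamma)=1_B\otimes h\gamma$ and $(1_B\otimes\gamma)\cdot(1_B\otimes h)=1_B\otimes\gamma h$, so $\widehat{\iota_2}$ is $H$-bilinear.

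Hence both pairs $(\iota_1,\widehat{\iota_1})$ and $(\iota_2,\widehat{\iota_2})$ are morphisms of FODCi, proving differentiability. I expect no real obstacle: the entire argument reduces to reading off the multiplication in $B\#H$ against the unit element on one side and invoking $\mathrm{d}_B 1_B=0=\mathrm{d}_H 1_H$. The only thing that requires a moment of care is keeping straight which side of the tensor the unit sits on when verifying bilinearity, which is why writing out the simplified action formulas first is the cleanest way to proceed.
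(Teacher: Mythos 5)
Your proposal is correct and follows essentially the same route as the paper's proof: both exhibit the candidate maps $\widehat{\iota_1}\colon\beta\mapsto\beta\otimes1_H$ and $\widehat{\iota_2}\colon\gamma\mapsto1_B\otimes\gamma$, verify the $B$- respectively $H$-bimodule property by evaluating the smash product actions against the unit on the appropriate tensor factor, and check $\widehat{\iota_i}\circ\mathrm{d}=\mathrm{d}_\#\circ\iota_i$ using $\mathrm{d}_B1_B=0=\mathrm{d}_H1_H$. Writing out the simplified (trivial-cocycle) action formulas first is a harmless presentational difference, not a different argument.
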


\begin{proof}
Clearly $\iota_1$ and $\iota_2$ are morphisms of algebras. We consider the morphisms $\widehat{\iota_1}:\Omega^1(B)\to\Omega^1(B\#H)$, $\beta\mapsto\beta\otimes1_{H}$ and $\widehat{\iota_2}:\Omega^1(H)\to\Omega^1(B\#H)$, $\gamma\mapsto1_{B}\otimes\gamma$. Now $\widehat{\iota_{1}}(b\beta b')=b\beta b'\otimes1_{H}=(b\otimes1_{H})\cdot(\beta\otimes1_{H})\cdot(b'\otimes1_{H})=b\widehat{\iota_1}(\beta)b'$, so $\widehat{\iota_1}$ is a morphism of $B$-bimodules and $\widehat{\iota_{2}}(h\gamma h')=1_{B}\otimes h\gamma h'=(1_{B}\otimes h)\cdot(1_{B}\otimes\gamma)\cdot(1_{B}\otimes h')=h\widehat{\iota_{2}}(\gamma)h'$, so $\widehat{\iota_{2}}$ is a morphism of $H$-bimodules. Furthermore, $\widehat{\iota_1}\mathrm{d}_{B}b=\mathrm{d}_{B}b\otimes1_{H}=\mathrm{d}_{\#}\iota_{1}b$ and $\widehat{\iota_{2}}\mathrm{d}_{H}h=1_{B}\otimes\mathrm{d}_{H}h=\mathrm{d}_{\#}\iota_{2}h$.
\end{proof}

Note that $\widehat{\iota_{1}}$ and $\widehat{\iota_{2}}$ are injective. Observe also that, in case of a pure crossed product $B\#_{\sigma}H$ (i.e., with $\sigma$ not trivial), $\iota_2$ is not a morphism of algebras in general.

\begin{remark}\label{Hmodtrivext}
Consider a FODC $(\Omega^1(A),\mathrm{d}_{A})$ on $A$ and the corresponding pullback calculus $(\Omega^1(B),\mathrm{d}_{B})$ on $B$, where we recall that $\mathrm{d}_B:=\mathrm{d}_A|_B:B\to\Omega^1(B)\subseteq\Omega^1(A)$. In particular, the inclusion $\iota\colon B\to A$ is differentiable. In order to build the right $H$-covariant FODC on $B\#H$ given by Theorem \ref{canFODC} we need that $(\Omega^1(B),\mathrm{d}_{B})$ is an $H$-module FODC, i.e., there must exist an action $\cdot:H\otimes\Omega^1(B)\to\Omega^1(B)$ that satisfies $h\cdot(b\mathrm{d}_{B}b')=(h_{1}\cdot b)(h_{2}\cdot\mathrm{d}_{B}b')=(h_{1}\cdot b)\mathrm{d}_{B}(h_{2}\cdot b')$, i.e.,
\begin{equation}\label{condTheoremsmash}
\begin{split}
h\cdot(b\mathrm{d}_{B}b')&=j(h_{1})bj^{-1}(h_{2})\mathrm{d}_{B}(j(h_{3})b'j^{-1}(h_{4}))\\&
=j(h_{1})bb'\mathrm{d}_{A}(j^{-1}(h_{2}))+j(h_{1})b\mathrm{d}_{B}(b')j^{-1}(h_{2})+j(h_{1})bj^{-1}(h_{2})\mathrm{d}_{A}(j(h_{3}))b'j^{-1}(h_{4})
\end{split}
\end{equation}
for all $h\in H$, $b,b'\in B$.
\end{remark}

We know that
the smash product algebra $B\#H$ is isomorphic to $A$ as right $H$-comodule algebras through $\theta\colon A\to B\#H$, $a\mapsto a_{0}j^{-1}(a_{1})\otimes a_{2}$, with inverse $\theta^{-1}:b\ot h\mapsto bj(h)$.
We are now prepared to give a classification of the smash product calculus in terms of the embedding $\iota\colon B\to A$, the cleaving map $j\colon H\to A$ and a "commutativity condition" on the differential $\mathrm{d}_A$.

\begin{theorem}\label{classificationFODC}
Let $B\subseteq A$ be a trivial extension with cleaving map $j:H\to A$. Consider a FODC $(\Omega^1(A),\mathrm{d}_{A})$ on $A$ with $\Omega^1(A)$ torsion-free as left and right $A$-module and $(\Omega^1(B),\mathrm{d}_{B})$ the corresponding pullback calculus on $B$ and a bicovariant FODC $(\Omega^1(H),\mathrm{d}_{H})$ on $H$. Then, $(\Omega^1(A),\mathrm{d}_{A})$ is isomorphic to $(\Omega^1(B\#H),\mathrm{d}_{\#})$ through $\theta$ if and only if:
\begin{enumerate}
\item[(\namedlabel{specialHmod1}{1})] $j$ is differentiable with $\widehat{j}$ \text{injective}. \medskip
\item[(\namedlabel{specialHmod'}{2})] $\widehat{\iota}(\Omega^1(B))j(H)\cap\iota(B)\widehat{j}(\Omega^1(H))=\{0\}$. \medskip
\item[(\namedlabel{specialHmod}{3})]\ $\mathrm{d}_{A}(j(h_{1}))bj^{-1}(h_{2})=-j(h_{1})b\mathrm{d}_{A}(j^{-1}(h_{2}))$ for all $h\in H,\ b\in B$.
\end{enumerate}
If these conditions are satisfied then $(\Omega^1(A),\mathrm{d}_{A})$ is a right $H$-covariant FODC on $A$.
\end{theorem}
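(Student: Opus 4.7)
The plan is to prove both implications, using the factorization $\theta^{-1}(b\otimes h) = \iota(b)\,j(h)$ together with the identities $\theta\circ\iota = \iota_1$ and $\theta\circ j = \iota_2$. The first follows because coinvariants satisfy $b_0\otimes b_1 = b\otimes 1_H$; the second is a direct Sweedler computation using the comodule-map property of $j$ and $j*j^{-1} = u_A\varepsilon$.

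For the forward direction, assume a FODC isomorphism $\widehat\theta\colon\Omega^1(A)\to\Omega^1(B\#H)$ over $\theta$ exists. Then $\widehat{j} = \widehat\theta^{-1}\circ\widehat{\iota_2}$ exhibits $j$ as differentiable with $\widehat{j}$ injective, since $\widehat{\iota_2}$ is injective by Lemma \ref{iota12}; this yields (\ref{specialHmod1}). Next, the construction in Theorem \ref{canFODC} with trivial $\sigma$ produces the internal direct sum decomposition
\[
\Omega^1(B\#H) = \bigl(\widehat{\iota_1}(\Omega^1(B))\,\iota_2(H)\bigr)\oplus\bigl(\iota_1(B)\,\widehat{\iota_2}(\Omega^1(H))\bigr),
\]
matching the summands $\Omega^1(B)\otimes H$ and $B\otimes\Omega^1(H)$; pulling this back through $\widehat\theta^{-1}$ gives (\ref{specialHmod'}). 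Condition (\ref{specialHmod}) is then verified inside $\Omega^1(B\#H)$, where $j$ becomes the algebra map $\iota_2$ and $j^{-1}=\iota_2\circ S$: using $\iota_2(h_1)\iota_1(b)\iota_2(S(h_2))=\iota_1(h\cdot b)$, Leibniz expansion of its differential together with $\mathrm{d}_H(h_2 S(h_3))=0$ forces the outer two terms to cancel; transporting back through $\widehat\theta^{-1}$ proves (\ref{specialHmod}).

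For the backward direction, the essential preliminary step is to upgrade the pullback calculus $(\Omega^1(B),\mathrm{d}_B)$ to an $H$-module FODC in the sense of Definition \ref{HmodFODC} (trivial $\sigma$), with action $h\cdot\beta := j(h_1)\,\beta\,j^{-1}(h_2)$ computed in $\Omega^1(A)$. The content of (\ref{specialHmod}) is that, in the Leibniz expansion
\[
\mathrm{d}_A\bigl(j(h_1) b j^{-1}(h_2)\bigr) = \mathrm{d}_A(j(h_1))\,b\,j^{-1}(h_2) + j(h_1)\,\mathrm{d}_A(b)\,j^{-1}(h_2) + j(h_1)\,b\,\mathrm{d}_A(j^{-1}(h_2)),
\]
the outer terms cancel, so $\mathrm{d}_B(h\cdot b) = j(h_1)\mathrm{d}_A(b) j^{-1}(h_2)$ lies in $\Omega^1(B)$. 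The action extends linearly to $\Omega^1(B)$ via $j(h_1)\beta j^{-1}(h_2)$, is well-defined because the right-hand side is manifestly linear in $\beta$, and the axioms of Definition \ref{HmodFODC} reduce routinely to $j*j^{-1} = u_A\varepsilon$ and the fact that $j$ is an algebra morphism in the trivial-$\sigma$ case. Theorem \ref{canFODC} then produces $(\Omega^1(B\#H),\mathrm{d}_\#)$.

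The last step is to define the candidate $\Theta\colon\Omega^1(B\#H)\to\Omega^1(A)$ via $\Theta(\beta\otimes h+b\otimes\gamma) := \widehat\iota(\beta)\,j(h)+\iota(b)\,\widehat j(\gamma)$. Well-definedness uses (\ref{specialHmod1}); the intertwining $\Theta\circ\mathrm{d}_\# = \mathrm{d}_A\circ\theta^{-1}$ reduces via Leibniz to checking $\mathrm{d}_A(\iota(b) j(h)) = \widehat\iota(\mathrm{d}_B b) j(h)+\iota(b)\widehat j(\mathrm{d}_H h)$; bimodule compatibility with \eqref{leftact}--\eqref{rightact} (trivial $\sigma$) reduces to the identity $h\cdot\beta = j(h_1)\beta j^{-1}(h_2)$ together with $j*j^{-1} = u_A\varepsilon$; surjectivity follows from $A = \iota(B) j(H)$, the Leibniz rule, and $\Omega^1(A) = A\mathrm{d}_A A$. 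The main obstacle is injectivity: (\ref{specialHmod'}) guarantees that the two component images of $\Theta$ meet only in zero, but one must additionally check that each component map is itself injective. This is where the torsion-freeness of $\Omega^1(A)$ as a left and right $A$-module enters, exploiting the linear independence of $\{j(h)\}$ over $B$ inherited from the comodule algebra isomorphism $A\cong B\otimes H$. Finally, right $H$-covariance of $(\Omega^1(A),\mathrm{d}_A)$ follows by transport of structure through $\Theta$ from Theorem \ref{canFODC}.
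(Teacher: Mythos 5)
Your proposal is correct and follows essentially the same route as the paper's proof: the same candidate isomorphism $\widehat{\theta^{-1}}(\beta\otimes h+b\otimes\gamma)=\widehat{\iota}(\beta)j(h)+\iota(b)\widehat{j}(\gamma)$, the same derivation of \eqref{specialHmod1} from $j=\theta^{-1}\circ\iota_2$, the same identification of the images $\widehat{\iota}(\Omega^1(B))j(H)$ and $\iota(B)\widehat{j}(\Omega^1(H))$ for \eqref{specialHmod'}, the same use of $\mathrm{d}_{H}(h_{2}S(h_{3}))=0$ inside $\Omega^1(B\#H)$ for \eqref{specialHmod}, torsion-freeness plus \eqref{specialHmod'} for injectivity, the preimage built from $\theta$ for surjectivity, and transport of structure for covariance. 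The only (harmless, in fact slightly strengthening) variation is that you derive the $H$-module FODC structure $h\cdot\beta=j(h_{1})\beta j^{-1}(h_{2})$ on the pullback calculus directly from \eqref{specialHmod}, whereas the paper assumes \eqref{condTheoremsmash} as a standing hypothesis (Remark \ref{Hmodtrivext}) and uses \eqref{specialHmod} only to obtain the identity \eqref{specialHmod2}.
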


\begin{proof}
We are assuming that $(\Omega^1(B),\mathrm{d}_{B})$ is an $H$-module FODC in order to build $(\Omega^1(B\#H),\mathrm{d}_{\#})$, i.e., that \eqref{condTheoremsmash} is satisfied. Thus, $(\Omega^1(A),\mathrm{d}_{A})$ and $(\Omega^1(B\#H),\mathrm{d}_{\#})$ are isomorphic through $\theta$ if and only if $\theta$ is differentiable and $\hat{\theta}$ is bijective (equivalently, $\theta^{-1}$ is differentiable and $\widehat{\theta^{-1}}$ is bijective). We prove that this is equivalent to (1), (2) and (3).

Suppose that $(\Omega^1(A),\mathrm{d}_{A})\cong(\Omega^1(B\#H),\mathrm{d}_{\#})$ as FODCi through $\theta$. Then, $\theta$ and $\theta^{-1}$ are differentiable with $\widehat{\theta^{-1}}=\widehat{\theta}^{-1}\colon\Omega^1(B\#H)\to\Omega^1(A)$. 
Furthermore, we know that $\iota_2$ is differentiable by Lemma \ref{iota12}. Thus, since $j=\theta^{-1}\circ\iota_{2}$, we obtain that $j$ is differentiable with $\widehat{j}=\widehat{\theta^{-1}}\circ\widehat{\iota_2}$. In particular, the diagram 
\[
    \begin{tikzcd}
	\Omega^1(H) & \Omega^1(B\#H) & \Omega^1(A) & \Omega^1(B) \\
	H & B\#H & A & B
	\arrow[from=2-1, to=1-1, "\mathrm{d}_{H}"]
	\arrow[from=2-1, to=2-2, "\iota_{2}"']
	\arrow[from=2-2, to=1-2, "\mathrm{d}_{\#}"']
	\arrow[from=1-1, to=1-2, "\widehat{\iota_2}"]
	\arrow[from=2-2, to=2-3, "\theta^{-1}"']
	\arrow[from=2-3, to=1-3, "\mathrm{d}_{A}"]
	\arrow[from=1-2, to=1-3, "\widehat{\theta^{-1}}"]
	\arrow[from=2-4, to=2-3, "\iota"]
	\arrow[from=2-4, to=1-4, "\mathrm{d}_{B}"']
	\arrow[from=1-4, to=1-3, "\widehat{\iota}"']
\end{tikzcd}
\]
commutes. Moreover, $\widehat{\iota_2}$ and $\widehat{\theta^{-1}}$ are injective, so $\widehat{j}$ is injective and \eqref{specialHmod1} holds.

We give the explicit expression of $\widehat{\theta^{-1}}$, which is uniquely determined by $\theta^{-1}$. 
\[
\begin{split}
    \widehat{\theta^{-1}}(b\mathrm{d}_{B}b'\otimes h)&=\widehat{\theta^{-1}}((b\otimes1_{H})\mathrm{d}_{\#}(b'\otimes h)-(bb'\otimes1_{H})\mathrm{d}_{\#}(1_{B}\otimes h))\\&=\theta^{-1}(b\otimes1_{H})\mathrm{d}_{A}(\theta^{-1}(b'\otimes h))-\theta^{-1}(bb'\otimes1_{H})\mathrm{d}_{A}(\theta^{-1}(1_{B}\otimes h))\\&=b\mathrm{d}_{A}(b'j(h))-bb'\mathrm{d}_{A}(j(h))=bb'\mathrm{d}_{A}(j(h))+b\mathrm{d}_{A}(b')j(h)-bb'\mathrm{d}_{A}(j(h))\\&=b\mathrm{d}_{A}(b')j(h)
\end{split}
\]
and so $\widehat{\theta^{-1}}(\Omega^1(B)\otimes H)=\widehat{\iota}(\Omega^1(B))j(H)$, while 
\[
\widehat{\theta^{-1}}(b\otimes h\mathrm{d}_{H}h')=\widehat{\theta^{-1}}((b\otimes h)\mathrm{d}_{\#}(1_{B}\otimes h'))=\theta^{-1}(b\otimes h)\mathrm{d}_{A}(\theta^{-1}(1_{B}\otimes h'))=bj(h)\mathrm{d}_{A}(j(h'))=b\widehat{j}(h\mathrm{d}_{H}h')
\]
and then $\widehat{\theta^{-1}}(B\otimes\Omega^1(H))=\iota(B)\widehat{j}(\Omega^1(H))$. Thus, we obtain
\begin{align*}
    \widehat{\theta}(\widehat{\iota}(\Omega^1(B))j(H)\cap\iota(B)\widehat{j}(\Omega^1(H)))&\subseteq\widehat{\theta}(\widehat{\theta^{-1}}(\Omega^1(B)\otimes H))\cap\widehat{\theta}(\widehat{\theta^{-1}}(B\otimes\Omega^1(H)))\\
    &=(\Omega^1(B)\otimes H)\cap(B\otimes\Omega^1(H))\\
    &=\{0\},
\end{align*}
from which $\widehat{\iota}(\Omega^1(B))j(H)\cap\iota(B)\widehat{j}(\Omega^1(H))=\{0\}$ follows, since $\widehat{\theta}$ is injective. Thus, \eqref{specialHmod'} is satisfied.

Furthermore, recalling that $j^{-1}(h)_{0}\otimes j^{-1}(h)_{1}=j^{-1}(h_{2})\otimes S(h_{1})$ and $j^{-1}(hh')=j^{-1}(h')j^{-1}(h)$ for all $h,h'\in H$, we obtain
\[
\begin{split}
    \widehat{\theta}(\mathrm{d}_{A}(j(h_{1}))bj^{-1}(h_{2}))&=\mathrm{d}_{\#}(\theta(j(h_{1})))\theta(bj^{-1}(h_{2}))\\
    &=\mathrm{d}_{\#}(j(h_{1})_{0}j^{-1}(j(h_{1})_{1})\otimes j(h_{1})_{2})(b_{0}j^{-1}(h_{2})_{0}j^{-1}(b_{1}j^{-1}(h_{2})_{1})\otimes b_{2}j^{-1}(h_{2})_{2})\\&=\mathrm{d}_{\#}(1_{A}\otimes h_{1})(bj^{-1}(h_{4})j^{-1}(S(h_{3}))\otimes S(h_{2}))=(1_{A}\otimes\mathrm{d}_{H}h_{1})(bj^{-1}(S(h_{3})h_{4})\otimes S(h_{2}))\\&=(1_{A}\otimes\mathrm{d}_{H}h_{1})(b\otimes S(h_{2}))=((\mathrm{d}_{H}h_{1})_{-1}\cdot b)\otimes(\mathrm{d}_{H}h_{1})_{0}S(h_{2})=(h_{1}\cdot b)\otimes\mathrm{d}_{H}(h_{2})S(h_{3})\\&=-(h_{1}\cdot b)\otimes h_{2}\mathrm{d}_{H}(S(h_{3}))=-((h_{1}\cdot b)\otimes h_{2})(1_{A}\otimes\mathrm{d}_{H}(S(h_{3})))\\&=-(j(h_{1})bj^{-1}(h_{2})\otimes h_{3})\mathrm{d}_{\#}(1_{A}\otimes S(h_{4}))=-\theta(j(h_{1})b)\mathrm{d}_{\#}(\theta(j^{-1}(h_{2})))\\&=-\widehat{\theta}(j(h_{1})b\mathrm{d}_{A}(j^{-1}(h_{2})))
\end{split}
\]
and then, since $\widehat{\theta}$ is injective, we obtain that \eqref{specialHmod} holds true. \medskip

On the other hand, suppose that \eqref{specialHmod1}, \eqref{specialHmod'} and \eqref{specialHmod} are satisfied. 
First note that, since \eqref{condTheoremsmash} is satisfied, using \eqref{specialHmod} 
we have $h\cdot(b\mathrm{d}_{B}b')=j(h_{1})b\mathrm{d}_{B}(b')j^{-1}(h_{2})$ for all  $h\in H$ and $b,b'\in B$. Thus, we obtain
\begin{equation}\label{specialHmod2}
h\cdot\beta=j(h_{1})\beta j^{-1}(h_{2}),\qquad \text{for all}\ h\in H\ \text{and}\ \beta\in\Omega^1(B). 
\end{equation}
Next, we show that $\theta^{-1}$ is differentiable, i.e., that there exists a $B\#H$-bimodule morphism $\widehat{\theta^{-1}}:\Omega^1(B\#H)\to\Omega^1(A)$ such that $\widehat{\theta^{-1}}\circ\mathrm{d}_{\#}=\mathrm{d}_{A}\circ\theta^{-1}$. 
We define
\[
\widehat{\theta^{-1}}(\beta\otimes h+b\otimes\gamma):=\widehat{\iota}(\beta)j(h)+b\widehat{j}(\gamma),\qquad \mathrm{for}\ \beta\otimes h+b\otimes\gamma\in\Omega^1(B\#H).
\]
Then
\[
\widehat{\theta^{-1}}\mathrm{d}_{\#}(b\otimes h)=\widehat{\theta^{-1}}(\mathrm{d}_{B}b\otimes h+b\otimes\mathrm{d}_{H}h)=\widehat{\iota}(\mathrm{d}_{B}b)j(h)+b\widehat{j}(\mathrm{d}_{H}h)=\mathrm{d}_{A}(b)j(h)+b\mathrm{d}_{A}(j(h))=\mathrm{d}_{A}(bj(h))=\mathrm{d}_{A}\theta^{-1}(b\otimes h),
\]
since $j$ is differentiable by \eqref{specialHmod1}. Furthermore, $\widehat{\theta^{-1}}$ is a morphism of $B\#H$-bimodules. Indeed, we can compute
\[
\begin{split}
    \widehat{\theta^{-1}}((b'\otimes h')\cdot(\beta\otimes h+b\otimes\gamma))&=\widehat{\theta^{-1}}(b'(h'_{1}\cdot\beta)\otimes h'_{2}h+b'(h'_{1}\cdot b)\otimes h'_{2}\gamma)=
b'(h'_{1}\cdot\beta)j(h'_{2}h)+b'(h'_{1}\cdot b)\widehat{j}(h'_{2}\gamma)\\&\overset{\eqref{specialHmod2}}{=}b'j(h'_{1})\beta j^{-1}(h'_{2})j(h'_{3})j(h)+b'j(h'_{1})bj^{-1}(h'_{2})\widehat{j}(h'_{3}\gamma)\\&=b'j(h')\beta j(h)+b'j(h'_{1})bj^{-1}(h'_{2})j(h'_{3})\widehat{j}(\gamma)=b'j(h')\beta j(h)+b'j(h')b\widehat{j}(\gamma)\\&=b'j(h')(\beta j(h)+b\widehat{j}(\gamma))=\theta^{-1}(b'\otimes h')(\widehat{\iota}(\beta)j(h)+b\widehat{j}(\gamma))\\&=(b'\otimes h')\cdot\widehat{\theta^{-1}}(\beta\otimes h+b\otimes\gamma),
\end{split}
\]
as well as
\[
\begin{split}
\widehat{\theta^{-1}}((\beta\otimes h)\cdot(b'\otimes h'))&=\widehat{\theta^{-1}}(\beta(h_{1}\cdot b')\otimes h_{2}h')=\beta(h_{1}\cdot b')j(h_{2}h')=\beta j(h_{1})b'j^{-1}(h_{2})j(h_{3})j(h')=\beta j(h)b'j(h')\\&=\widehat{\theta^{-1}}(\beta\otimes h)\cdot(b'\otimes h')
\end{split}
\]
and
\[
\begin{split}
\widehat{\theta^{-1}}((b\otimes h\mathrm{d}_{H}h')\cdot(b'\otimes h''))&=\widehat{\theta^{-1}}(b((h\mathrm{d}_{H}h')_{-1}\cdot b')\otimes(h\mathrm{d}_{H}h')_{0}h'')=\widehat{\theta^{-1}}(b(h_{1}h'_{1}\cdot b')\otimes h_{2}\mathrm{d}_{H}(h'_{2})h'')\\&=b(h_{1}h'_{1}\cdot b')\widehat{j}(h_{2}\mathrm{d}_{H}(h'_{2})h'')=bj(h_{1}h'_{1})b'j^{-1}(h_{2}h'_{2})j(h_{3})\mathrm{d}_{A}(j(h'_{3}))j(h'')\\&=bj(h)j(h'_{1})b'j^{-1}(h'_{2})\mathrm{d}_{A}(j(h'_{3}))j(h'')=-bj(h)j(h'_{1})b'\mathrm{d}_{A}(j^{-1}(h'_{2}))j(h'_{3})j(h'')\\&\overset{\eqref{specialHmod}}{=}bj(h)\mathrm{d}_{A}(j(h'_{1}))b'j^{-1}(h'_{2})j(h'_{3})j(h'')=bj(h)\mathrm{d}_{A}(j(h'))b'j(h'')=b\widehat{j}(h\mathrm{d}_{H}h')\theta^{-1}(b'\otimes h'')\\&=\widehat{\theta^{-1}}(b\otimes h\mathrm{d}_{H}h')\cdot(b'\otimes h'').
\end{split}
\]
Hence, we have that $\theta^{-1}$ is differentiable and $\widehat{\theta^{-1}}$ is given by $\mu^{r}_{\Omega^1(A)}\circ(\widehat{\iota}\otimes j)+\mu^{l}_{\Omega^1(A)}\circ(\iota\otimes\widehat{j})$. Now we show that $\widehat{\theta^{-1}}$ is bijective in order to obtain that it is an isomorphism of $B\#H$-bimodules. 
The right and left $A$-actions $\mu^{r}_{\Omega^1(A)}$ and $\mu^{l}_{\Omega^1(A)}$ on $\Omega^1(A)$ are injective since $\Omega^1(A)$ is a right and left torsion-free $A$-module. Moreover, $\iota$, $\widehat{\iota}$, $j$ are injective and so is $\widehat{j}$ by \eqref{specialHmod1}, thus $\mu^{r}_{\Omega^1(A)}\circ(\widehat{\iota}\otimes j)$ and $\mu^{l}_{\Omega^1(A)}\circ(\iota\otimes\widehat{j})$ are injective. It follows that $\widehat{\theta^{-1}}$ is injective, since $\widehat{\iota}(\Omega^1(B))j(H)\cap\iota(B)\widehat{j}(\Omega^1(H))=\{0\}$ holds true by \eqref{specialHmod'}. Therefore, it only remains to prove the surjectivity of $\widehat{\theta^{-1}}$. Let $a\mathrm{d}_{A}a'\in\Omega^1(A)$ be arbitrary and consider $(a_{0}j^{-1}(a_{1})\otimes a_{2})\mathrm{d}_{\#}(a'_{0}j^{-1}(a'_{1})\otimes a'_{2})\in\Omega^1(B\#H)$. Thus, we have 
\[
\begin{split}
    (a_{0}j^{-1}(a_{1})\otimes a_{2})&(\mathrm{d}_{B}(a'_{0}j^{-1}(a'_{1}))\otimes a'_{2}+a'_{0}j^{-1}(a'_{1})\otimes\mathrm{d}_{H}a'_{2})\\&=a_{0}j^{-1}(a_{1})(a_{2}\cdot\mathrm{d}_{B}(a'_{0}j^{-1}(a'_{1})))\otimes a_{3}a'_{2}+a_{0}j^{-1}(a_{1})(a_{2}\cdot a'_{0}j^{-1}(a'_{1}))\otimes a_{3}\mathrm{d}_{H}a'_{2}\\&\overset{\eqref{specialHmod2}}{=}a_{0}j^{-1}(a_{1})j(a_{2})\mathrm{d}_{B}(a'_{0}j^{-1}(a'_{1}))j^{-1}(a_{3})\otimes a_{4}a'_{2}+a_{0}j^{-1}(a_{1})j(a_{2})a'_{0}j^{-1}(a'_{1})j^{-1}(a_{3})\otimes a_{4}\mathrm{d}_{H}a'_{2}\\&=a_{0}\mathrm{d}_{B}(a'_{0}j^{-1}(a'_{1}))j^{-1}(a_{1})\otimes a_{2}a'_{2}+a_{0}a'_{0}j^{-1}(a'_{1})j^{-1}(a_{1})\otimes a_{2}\mathrm{d}_{H}a'_{2},
\end{split}
\]
which implies
\[
\begin{split}
    \widehat{\theta^{-1}}((a_{0}j^{-1}(a_{1})\otimes a_{2})\mathrm{d}_{\#}(a'_{0}j^{-1}(a'_{1})\otimes a'_{2}))&=\widehat{\theta^{-1}}(a_{0}\mathrm{d}_{B}(a'_{0}j^{-1}(a'_{1}))j^{-1}(a_{1})\otimes a_{2}a'_{2}+a_{0}a'_{0}j^{-1}(a'_{1})j^{-1}(a_{1})\otimes a_{2}\mathrm{d}_{H}a'_{2})\\&=a_{0}\mathrm{d}_{A}(a'_{0}j^{-1}(a'_{1}))j^{-1}(a_{1})j(a_{2})j(a'_{2})+a_{0}a'_{0}j^{-1}(a'_{1})j^{-1}(a_{1})j(a_{2})\mathrm{d}_{A}(j(a'_{2}))\\&=a\mathrm{d}_{A}(a'_{0}j^{-1}(a'_{1}))j(a'_{2})+aa'_{0}j^{-1}(a'_{1})\mathrm{d}_{A}(j(a'_{2}))\\&=a(\mathrm{d}_{A}(a'_{0}j^{-1}(a'_{1}))j(a'_{2})+a'_{0}j^{-1}(a'_{1})\mathrm{d}_{A}(j(a'_{2})))\\&=a\mathrm{d}_{A}(a'_{0}j^{-1}(a'_{1})j(a'_{2}))=a\mathrm{d}_{A}(a'_{0}\varepsilon(a'_{1}))\\&=a\mathrm{d}_{A}a'.
\end{split}
\]
Thus, $\widehat{\theta^{-1}}$ is also surjective and consequently it is an isomorphism of $B\#H$-bimodules. Hence, $(\Omega^1(A),\mathrm{d}_{A})\cong(\Omega^1(B\#H),\mathrm{d}_{\#})$ and the converse implication follows.\medskip

If  $(\Omega^1(A),\mathrm{d}_{A})\cong(\Omega^1(B\#H),\mathrm{d}_{\#})$ are isomorphic as FODCi through $\theta$ then, clearly, $(\Omega^1(A),\mathrm{d}_{A})$ is a right $H$-covariant FODC since this is true for $(\Omega^1(B\#H),\mathrm{d}_{\#})$. More explicitly,
we define the right $H$-coaction $\rho:\Omega^1(A)\to\Omega^1(A)\otimes H$ by $\rho:=(\widehat{\theta^{-1}}\otimes\mathrm{Id}_{H})\circ\rho'\circ\widehat{\theta}$, where $\rho'$ is the right $H$-coaction on $\Omega^1(B\#H)$, so that $\widehat{\theta}$ is automatically right $H$-colinear. Using that $\widehat{\theta}$ is left $A$-linear and that the left $B\#H$-action of $\Omega^1(B\#H)$, $\theta$ and $\mathrm{d}_{\#}$ are right $H$-colinear we explicitly obtain
\[
\begin{split}
\rho(a\mathrm{d}_{A}a')&=(\widehat{\theta^{-1}}\otimes\mathrm{Id}_{H})\rho'(\theta(a)\mathrm{d}_{\#}(\theta(a')))=(\widehat{\theta^{-1}}\otimes\mathrm{Id}_{H})(\theta(a)_{0}\mathrm{d}_{\#}(\theta(a'))_{0}\otimes\theta(a)_{1}\mathrm{d}_{\#}(\theta(a'))_{1})\\&=(\widehat{\theta^{-1}}\otimes\mathrm{Id}_{H})(\theta(a_{0})\mathrm{d}_{\#}(\theta(a')_{0})\otimes a_{1}\theta(a')_{1})=(\widehat{\theta^{-1}}\otimes\mathrm{Id}_{H})(\theta(a_{0})\mathrm{d}_{\#}(\theta(a'_{0}))\otimes a_{1}a'_{1})\\&=\theta^{-1}(\theta(a_{0}))\mathrm{d}_{A}(\theta^{-1}(\theta(a'_{0})))\otimes a_{1}a'_{1}=a_{0}\mathrm{d}_{A}a'_{0}\otimes a_{1}a'_{1}.
\end{split}
\]
Then, $\Omega^1(A)$ is in $_{A}\mm^{H}_{A}$ since $A$ is a right $H$-comodule algebra and $\mathrm{d}_{A}$ is right $H$-colinear.
\end{proof}

\begin{remark}
    Note that, assuming a non-trivial calculus $(\Omega^1(A),\mathrm{d}_{A})$ on the algebra $A$ (i.e., $\Omega^1(A)\not=\{0\}$) such that $\Omega^1(A)$ is torsion-free as left (right) $A$-module, it easily follows that $A$ can not have zero divisors.
\end{remark}

\subsection{Higher order forms}\label{Sec3.3}

In this subsection $H$ denotes a Hopf algebra and $B$ a $\sigma$-twisted left $H$-module algebra with measure $\cdot\colon H\otimes B\to B$ and $2$-cocycle $\sigma\colon H\otimes H\to B$.
In the following we introduce higher order forms on $B\#_{\sigma}H$. By generalizing Definition \ref{HmodFODC} we give the following: 

\begin{definition}\label{def:HOsmash}
Let $(\Omega^{\bullet}(B),\mathrm{d}_{B})$ be a DC on $B$. We say that it is a $\sigma$-twisted $H$-module DC if there exist linear maps $\cdot:H\otimes\Omega^{n}(B)\to\Omega^{n}(B)$ for all $n\geq1$, such that 
\begin{align*}
    h\cdot(b^{0}\mathrm{d}_{B}b^{1}\wedge\cdot\cdot\cdot\wedge\mathrm{d}_{B}b^{n})
    &=(h_{1}\cdot b^{0})(h_{2}\cdot\mathrm{d}_{B}b^{1})\wedge\cdot\cdot\cdot\wedge(h_{n+1}\cdot\mathrm{d}_{B}b^{n}),\\
    h\cdot\mathrm{d}_{B}b
    &=\mathrm{d}_{B}(h\cdot b),\\
    \mathrm{d}_{B}\circ\sigma
    &=0
\end{align*}
for all $b,b^0,\ldots,b^n\in B$ and $h\in H$.
\end{definition}
Let us discuss some immediate consequences of the above definition.
\begin{remark}
From the first two equations in Definition \ref{def:HOsmash} we obtain that the linear maps $\cdot:H\otimes\Omega^{n}(B)\to\Omega^{n}(B)$, if they exist, are uniquely determined by
$$
h\cdot(b^{0}\mathrm{d}_{B}b^{1}\wedge\cdot\cdot\cdot\wedge\mathrm{d}_{B}b^{n})=(h_{1}\cdot b^{0})\mathrm{d}_{B}(h_{2}\cdot b^{1})\wedge\cdot\cdot\cdot\wedge\mathrm{d}_{B}(h_{n+1}\cdot b^{n}).
$$
Clearly $1_{H}\cdot\beta=\beta$ for all $\beta\in\Omega^\bullet(B)$ and, as in the proof of Lemma \ref{twistedHmodcal}, one shows that 
\begin{equation}\label{actionDC}
h\cdot(h'\cdot\beta)=\sigma(h_{1}\otimes h'_{1})(h_{2}h'_{2}\cdot\beta)\sigma^{-1}(h_{3}\otimes h'_{3})\ \text{for all}\ \beta\in\Omega^{\bullet}(B)\ \text{and}\ h,h'\in H.
\end{equation}
In addition,   
\begin{equation}\label{actionDC2}
    h\cdot(\beta\wedge\beta')=(h_{1}\cdot\beta)\wedge(h_{2}\cdot\beta')\ \text{for all}\ h\in H,\ \beta\in\Omega^{n}(B)\ \text{and}\ \beta'\in\Omega^{m}(B)
\end{equation}
holds true for arbitrary $n,m\geq0$. Hence, we have that the graded algebra $\Omega^{\bullet}(B)$ is a $\sigma$-twisted left $H$-module algebra with measure given by the collection of the linear maps $\cdot:H\otimes\Omega^n(B)\to\Omega^n(B)$, for $n\geq0$.
\end{remark}

Given a bicovariant DC $\Omega^\bullet(H)$
it is proven in \cite[Lemma 5.4]{SchDC} that $\Delta$ is differentiable if and only if $\Omega^{\bullet}(H)$ is a differential graded bialgebra with $\Omega^{0}(H)=H$ as bialgebras. Thus, in this case, we have $\Omega^{\bullet}(\Delta):\Omega^{\bullet}(H)\to\Omega^{\bullet}_{\otimes}(H\otimes H)$.
For an arbitrary $\gamma\in\Omega^{\bullet}(H)$ we write $\Omega^{\bullet}(\Delta)(\gamma)=\gamma_{[1]}\otimes\gamma_{[2]}$ and, denoting by $\pi^{kl}:\Omega^{k+l}(H\otimes H)\to\Omega^{k}(H)\otimes\Omega^{l}(H)$ the canonical projection, for a homogeneous element $\gamma\in\Omega^{j}(H)$ and $k+l=j$ we write $\pi^{kl}\Omega^{j}(\Delta)(\gamma)=:\gamma_{<1,k>}\otimes\gamma_{<2,l>}$, so that $\Omega^{j}(\Delta)(\gamma)=\sum_{k+l=j}{\gamma_{<1,k>}\otimes\gamma_{<2,l>}}$ in accordance with \cite{PS}. In terms of this notation, the fact that $\Omega^{\bullet}(\Delta)$ is an algebra map reads
\begin{equation}\label{p1}
(\gamma\wedge\gamma')_{[1]}\otimes(\gamma\wedge\gamma')_{[2]}
=\sum_{\substack{k+l=j\\ k'+l'=j'}}(-1)^{lk'}\gamma_{<1,k>}\wedge\gamma'_{<1,k'>}\otimes\gamma_{<2,l>}\wedge\gamma'_{<2,l'>}
\end{equation}
for every $\gamma\in\Omega^{j}(H)$ and $\gamma'\in\Omega^{j'}(H)$. 
Applying the projection $\pi^{0,k,l}$ to the equation $(\mathrm{Id}\otimes\Omega^\bullet(\Delta))\Omega^\bullet(\Delta)(\gamma)=(\Omega^\bullet(\Delta)\otimes\mathrm{Id})\Omega^\bullet(\Delta)(\gamma)$ gives
\begin{equation}\label{p2}
\gamma_{-1}\otimes\gamma_{0,<1,k>}\otimes\gamma_{0,<2,l>}
=\gamma_{<1,k>,-1}\otimes\gamma_{<1,k>,0}\otimes\gamma_{<2,l>}
\end{equation}
and we have a similar formula
for the projection $\pi^{k,l,0}$.

We now define higher order forms on $B\#_{\sigma}H$ by generalizing Theorem \ref{canFODC}.

\begin{theorem}\label{thm:higerorderforms}
Let 
$(\Omega^{\bullet}(B),\mathrm{d}_{B})$ be a $\sigma$-twisted $H$-module DC on $B$ and $(\Omega^{\bullet}(H),\mathrm{d}_{H})$ a DC on $H$ with respect to which $\Delta:H\to H\otimes H$ is differentiable. Let us define
\[
\Omega^{n}(B\#_{\sigma}H):=\bigoplus_{i=0}^{n}{\Omega^{n-i}(B)\otimes\Omega^{i}(H)}\qquad \text{for all}\ n\geq0 
\]
and set
\begin{align*}
    (\omega\otimes\gamma)\wedge(\omega'\otimes\gamma'):&=(-1)^{jk}(\omega\wedge(\gamma_{-2}\cdot\omega')\sigma(\gamma_{-1}\otimes\gamma'_{-1}))\otimes(\gamma_{0}\wedge\gamma'_{0}),\\
    \mathrm{d}_{\#_\sigma}(\omega\otimes\gamma):
    &=\mathrm{d}_{B}\omega\otimes\gamma+(-1)^{i}\omega\otimes\mathrm{d}_{H}\gamma
\end{align*}
for $\omega\in\Omega^{i}(B)$, $\gamma\in\Omega^{j}(H)$ and $\omega'\in\Omega^{k}(B)$. Then $(\Omega^{\bullet}(B\#_{\sigma}H),\mathrm{d}_{\#_\sigma})$ is a right $H$-covariant DC on $B\#_{\sigma}H$. Furthermore, the right $H$-coaction $\rho:B\#_{\sigma}H\to(B\#_{\sigma}H)\otimes H$ is differentiable with differential
\begin{align*}
    \Omega^{\bullet}(\rho):\Omega^{\bullet}(B\#_{\sigma}H)&\to\Omega^{\bullet}((B\#_{\sigma}H)\otimes H)\\
    \omega\otimes\gamma&\mapsto\omega\otimes\gamma_{[1]}\otimes\gamma_{[2]}.
\end{align*}
\end{theorem}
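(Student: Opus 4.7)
The plan is to verify, in order, the differential graded algebra axioms (unit, associativity, $\mathrm{d}_{\#_\sigma}^2=0$, Leibniz rule), the surjectivity condition of a DC, right $H$-covariance, and finally the differentiability of $\rho$. I would use Theorem \ref{canFODC} as the degree-one base case. By the assumption that $\Delta$ is differentiable, $\Omega^{\bullet}(H)$ is a differential graded bialgebra, so the left $H$-coaction $\lambda:=\pi^{0\bullet}\circ\Omega^{\bullet}(\Delta)$ is a DGA morphism; in particular $\lambda(\gamma\wedge\gamma')=\gamma_{-1}\gamma'_{-1}\otimes(\gamma_0\wedge\gamma'_0)$ and $\mathrm{d}_H$ is left $H$-colinear. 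These two facts, which follow from \eqref{p1} and \eqref{p2}, will be used throughout.

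The unit $1_B\otimes 1_H$ acts as a two-sided identity because $\lambda(1_H)=1_H\otimes 1_H$, $\sigma(h\otimes 1_H)=\sigma(1_H\otimes h)=\varepsilon(h)1_B$ and $1_H\cdot\omega=\omega$. The identity $\mathrm{d}_{\#_\sigma}^2=0$ reduces to $\mathrm{d}_B^2=\mathrm{d}_H^2=0$ once the cross terms cancel via the sign $(-1)^{|\omega|}$ in the formula. Associativity is the main obstacle: expanding both bracketings of $(\omega\otimes\gamma)\wedge(\omega'\otimes\gamma')\wedge(\omega''\otimes\gamma'')$, I would use $(\gamma_0\wedge\gamma'_0)_{-1}\otimes(\gamma_0\wedge\gamma'_0)_0=\gamma_{-1}\gamma'_{-1}\otimes(\gamma_0\wedge\gamma'_0)$ to expose a triple iterated $H$-action and then rewrite it via the $2$-cocycle condition on $\sigma$ and \eqref{actionDC} to match the other bracketing. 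The grading signs collect to $(-1)^{jk+(j+l)m}$ in both orderings (with $\gamma\in\Omega^j(H)$, $\omega'\in\Omega^k(B)$, $\gamma'\in\Omega^l(H)$, $\omega''\in\Omega^m(B)$), so they agree. This is the graded analogue of the proof that $B\#_\sigma H$ is associative.

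For the Leibniz rule I would distribute $\mathrm{d}_{\#_\sigma}$ across $(\omega\otimes\gamma)\wedge(\omega'\otimes\gamma')$ using the Leibniz rules of $\mathrm{d}_B$ and $\mathrm{d}_H$, pull $h\cdot$ out through $\mathrm{d}_B$ using the higher-degree version of $\mathrm{d}_B(h\cdot b)=h\cdot\mathrm{d}_Bb$ from Definition \ref{def:HOsmash}, and crucially invoke \eqref{dsigma}, namely $\mathrm{d}_B\circ\sigma=0$, to annihilate the cross terms where $\mathrm{d}_B$ would hit the cocycle. Left $H$-colinearity of $\mathrm{d}_H$ handles the terms involving $\mathrm{d}_H$ applied to $\lambda(\gamma)$. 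The sign bookkeeping with $(-1)^i$ is routine.

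The surjectivity condition I would prove by induction on $n$, using Theorem \ref{canFODC} as the base case and expanding products of generating $1$-forms via the wedge formula into monomials $(b^0\mathrm{d}_Bb^1\wedge\cdots\wedge\mathrm{d}_Bb^i)\otimes(h^0\mathrm{d}_Hh^1\wedge\cdots\wedge\mathrm{d}_Hh^{n-i})$, which span $\Omega^n(B\#_\sigma H)$ since $\Omega^{\bullet}(B)$ and $\Omega^{\bullet}(H)$ are each generated by their degree-one parts. Right $H$-covariance follows by extending the coaction of Theorem \ref{canFODC} to $\omega\otimes\gamma\mapsto\omega\otimes\gamma_0\otimes\gamma_1$ (the restriction of the graded right coaction of $\Omega^{\bullet}(H)$) and verifying compatibility with $\wedge$ and $\mathrm{d}_{\#_\sigma}$ from the right $H$-colinearity of $\sigma$, the measure, $\lambda$, $\mathrm{d}_H$ and $\wedge$. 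Finally, differentiability of $\rho$ with $\Omega^{\bullet}(\rho)=\mathrm{Id}_{\Omega^{\bullet}(B)}\otimes\Omega^{\bullet}(\Delta)$ holds because $\Omega^{\bullet}(\Delta)$ is a DGA morphism by assumption; checking the stated formula on generators $b\otimes h$ and its commutation with $\mathrm{d}_{\#_\sigma}$ reduces to the degree-one verification at the end of the proof of Theorem \ref{canFODC}.
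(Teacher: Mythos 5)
Your proposal is correct and follows essentially the same route as the paper's proof: the same twisted-wedge associativity computation via \eqref{actionDC}, \eqref{actionDC2} and the cocycle condition (your collected sign $(-1)^{jk+(j+l)m}$ agrees with the paper's $(-1)^{j(i'+i'')+j'i''}$ after relabeling), the same use of the second and third equations of Definition \ref{def:HOsmash} together with $\mathrm{d}_B\circ\sigma=0$ to kill the cross terms in the Leibniz rule, the same induction on degree for the surjectivity condition with Theorem \ref{canFODC} as base case, and the same explicit map $\Omega^{\bullet}(\rho)=\mathrm{Id}_{\Omega^{\bullet}(B)}\otimes\Omega^{\bullet}(\Delta)$. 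The one place you understate the work is the final step: proving that $\Omega^{\bullet}(\rho)$ is multiplicative in all degrees does not literally ``reduce to the degree-one verification'' --- it is the longest computation in the paper's proof, requiring \eqref{p1}, the coassociativity projection \eqref{p2}, and an auxiliary product $\tilde{\wedge}$ on $\Omega^{\bullet}(B\#_{\sigma}H)\otimes\Omega^{\bullet}(H)$, because the twisted wedge threads the left coaction $\lambda$ of $\Omega^{\bullet}(H)$ through the measure and the cocycle --- but since you explicitly list \eqref{p1} and \eqref{p2} among the facts to be used throughout, this is compression rather than a gap.
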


\begin{proof}
First, note that $\wedge$ is a map of degree 0, since this is the case for the products of $\Omega^{\bullet}(B)$ and $\Omega^{\bullet}(H)$, $\lambda(\Omega^{n}(H))\subseteq H\otimes\Omega^{n}(H)$ and $H\cdot\Omega^n(B)\subseteq\Omega^{n}(B)$ for all $n\geq0$.
Now, we show the associativity of the product. Given $\omega\in\Omega^{i}(B)$, $\omega'\in\Omega^{i'}(B)$, $\omega''\in\Omega^{i''}(B)$, $\gamma\in\Omega^{j}(H)$, $\gamma'\in\Omega^{j'}(H)$ and $\gamma''\in\Omega^{j''}(H)$, we obtain
\[
\begin{split}
    &((\omega\otimes\gamma)\wedge(\omega'\otimes\gamma'))\wedge(\omega''\otimes\gamma'')=(-1)^{ji'}(\omega\wedge(\gamma_{-2}\cdot\omega')\sigma(\gamma_{-1}\otimes\gamma'_{-1})\otimes\gamma_{0}\wedge\gamma'_{0})\wedge(\omega''\otimes\gamma'')\\
    &=(-1)^{ji'+(j+j')i''}\omega\wedge(\gamma_{-2}\cdot\omega')\sigma(\gamma_{-1}\otimes\gamma'_{-1})\wedge((\gamma_{0}\wedge\gamma'_{0})_{-2}\cdot\omega'')\sigma((\gamma_{0}\wedge\gamma'_{0})_{-1}\otimes\gamma''_{-1})\otimes(\gamma_{0}\wedge\gamma'_{0})_{0}\wedge\gamma''_{0}\\
    &=(-1)^{j(i'+i'')+j'i''}\omega\wedge(\gamma_{-4}\cdot\omega')\sigma(\gamma_{-3}\otimes\gamma'_{-3})\wedge(\gamma_{-2}\gamma'_{-2}\cdot\omega'')\sigma(\gamma_{-1}\gamma'_{-1}\otimes\gamma''_{-1})\otimes\gamma_{0}\wedge\gamma'_{0}\wedge\gamma''_{0}\\
    &=(-1)^{j(i'+i'')+j'i''}\omega\wedge(\gamma_{-6}\cdot\omega')\wedge\sigma(\gamma_{-5}\otimes\gamma'_{-5})(\gamma_{-4}\gamma'_{-4}\cdot\omega'')\sigma^{-1}(\gamma_{-3}\otimes\gamma'_{-3})\sigma(\gamma_{-2}\otimes\gamma'_{-2})\sigma(\gamma_{-1}\gamma'_{-1}\otimes\gamma''_{-1})\\&\quad\otimes\gamma_{0}\wedge\gamma'_{0}\wedge\gamma''_{0}\\
    &\overset{\eqref{actionDC}}{=}(-1)^{j(i'+i'')+j'i''}\omega\wedge(\gamma_{-4}\cdot\omega')\wedge(\gamma_{-3}\cdot(\gamma'_{-3}\cdot\omega''))(\gamma_{-2}\cdot\sigma(\gamma'_{-2}\otimes\gamma''_{-2}))\sigma(\gamma_{-1}\otimes\gamma'_{-1}\gamma''_{-1})\otimes\gamma_{0}\wedge\gamma'_{0}\wedge\gamma''_{0}\\
    &\overset{\eqref{actionDC2}}{=}(-1)^{j(i'+i'')+j'i''}\omega\wedge(\gamma_{-2}\cdot(\omega'\wedge(\gamma'_{-2}\cdot\omega'')\sigma(\gamma'_{-1}\otimes\gamma''_{-1})))\sigma(\gamma_{-1}\otimes(\gamma'_{0}\wedge\gamma''_{0})_{-1})\otimes\gamma_{0}\wedge(\gamma'_{0}\wedge\gamma''_{0})_{0}\\
    &=(\omega\otimes\gamma)\wedge((-1)^{j'i''}\omega'\wedge(\gamma'_{-2}\cdot\omega'')\sigma(\gamma'_{-1}\otimes\gamma''_{-1})\otimes\gamma'_{0}\wedge\gamma''_{0})\\
    &=(\omega\otimes\gamma)\wedge((\omega'\otimes\gamma')\wedge(\omega''\otimes\gamma'')).
\end{split}
\]
Furthermore, $\mathrm{d}_{\#_\sigma}$ satisfies the Leibniz rule, indeed 
\[
\begin{split}
    &\mathrm{d}_{\#_\sigma}(\omega\otimes\gamma)\wedge(\omega'\otimes\gamma')+(-1)^{i+j}(\omega\otimes\gamma)\wedge\mathrm{d}_{\#_\sigma}(\omega'\otimes\gamma')\\&=(\mathrm{d}_{B}\omega\otimes\gamma)\wedge(\omega'\otimes\gamma')+(-1)^{i}(\omega\otimes\mathrm{d}_{H}\gamma)\wedge(\omega'\otimes\gamma')+(-1)^{i+j}(\omega\otimes\gamma)\wedge(\mathrm{d}_{B}\omega'\otimes\gamma')+(-1)^{i+j+i'}(\omega\otimes\gamma)\wedge(\omega'\otimes\mathrm{d}_{H}\gamma')\\&=(-1)^{ji'}\mathrm{d}_{B}\omega\wedge(\gamma_{-2}\cdot\omega')\sigma(\gamma_{-1}\otimes\gamma'_{-1})\otimes\gamma_{0}\wedge\gamma'_{0}+(-1)^{i+(j+1)i'}\omega\wedge((\mathrm{d}_{H}\gamma)_{-2}\cdot\omega')\sigma((\mathrm{d}_{H}\gamma)_{-1}\otimes\gamma'_{-1})\otimes(\mathrm{d}_{H}\gamma)_{0}\wedge\gamma'_{0}\\&+(-1)^{i+j+j(i'+1)}\omega\wedge(\gamma_{-2}\cdot\mathrm{d}_{B}\omega')\sigma(\gamma_{-1}\otimes\gamma'_{-1})\otimes\gamma_{0}\wedge\gamma'_{0}+(-1)^{i+j+i'+ji'}\omega\wedge(\gamma_{-2}\cdot\omega')\sigma(\gamma_{-1}\otimes(\mathrm{d}_{H}\gamma')_{-1})\otimes\gamma_{0}\wedge(\mathrm{d}_{H}\gamma')_{0}\\&=(-1)^{ji'}\mathrm{d}_{B}\omega\wedge(\gamma_{-2}\cdot\omega')\sigma(\gamma_{-1}\otimes\gamma'_{-1})\otimes\gamma_{0}\wedge\gamma'_{0}+(-1)^{ji'+i+i'}(\omega\wedge(\gamma_{-2}\cdot\omega')\sigma(\gamma_{-1}\otimes\gamma'_{-1})\otimes\mathrm{d}_{H}\gamma_{0}\wedge\gamma'_{0}\\&+(-1)^{i+ji'}\omega\wedge\mathrm{d}_{B}((\gamma_{-2}\cdot\omega')\sigma(\gamma_{-1}\otimes\gamma'_{-1}))\otimes\gamma_{0}\wedge\gamma'_{0}+(-1)^{i+j+i'+ji'}\omega\wedge(\gamma_{-2}\cdot\omega')\sigma(\gamma_{-1}\otimes\gamma'_{-1})\otimes\gamma_{0}\wedge\mathrm{d}_{H}\gamma'_{0}\\&=(-1)^{ji'}(\mathrm{d}_{B}\omega\wedge(\gamma_{-2}\cdot\omega')\sigma(\gamma_{-1}\otimes\gamma'_{-1})\otimes\gamma_{0}\wedge\gamma'_{0}+(-1)^{i}\omega\wedge\mathrm{d}_{B}((\gamma_{-2}\cdot\omega')\sigma(\gamma_{-1}\otimes\gamma'_{-1}))\otimes\gamma_{0}\wedge\gamma'_{0})\\&+(-1)^{ji'+i+i'}(\omega\wedge(\gamma_{-2}\cdot\omega')\sigma(\gamma_{-1}\otimes\gamma'_{-1})\otimes\mathrm{d}_{H}\gamma_{0}\wedge\gamma'_{0}+(-1)^{j}\omega\wedge(\gamma_{-2}\cdot\omega')\sigma(\gamma_{-1}\otimes\gamma'_{-1})\otimes\gamma_{0}\wedge\mathrm{d}_{H}\gamma'_{0})\\&=(-1)^{ji'}(\mathrm{d}_{B}(\omega\wedge(\gamma_{-2}\cdot\omega')\sigma(\gamma_{-1}\otimes\gamma'_{-1}))\otimes\gamma_{0}\wedge\gamma'_{0})+(-1)^{ji'+i+i'}(\omega\wedge(\gamma_{-2}\cdot\omega')\sigma(\gamma_{-1}\otimes\gamma'_{-1})\otimes\mathrm{d}_{H}(\gamma_{0}\wedge\gamma'_{0}))\\&=(-1)^{ji'}(\mathrm{d}_{B}(\omega\wedge(\gamma_{-2}\cdot\omega')\sigma(\gamma_{-1}\otimes\gamma'_{-1}))\otimes\gamma_{0}\wedge\gamma'_{0}+(-1)^{i+i'}\omega\wedge(\gamma_{-2}\cdot\omega')\sigma(\gamma_{-1}\otimes\gamma'_{-1})\otimes\mathrm{d}_{H}(\gamma_{0}\wedge\gamma'_{0}))\\&=(-1)^{ji'}\mathrm{d}_{\#_\sigma}(\omega\wedge(\gamma_{-2}\cdot\omega')\sigma(\gamma_{-1}\otimes\gamma'_{-1})\otimes\gamma_{0}\wedge\gamma'_{0})=\mathrm{d}_{\#_\sigma}((\omega\otimes\gamma)\wedge(\omega'\otimes\gamma')),
\end{split}
\]
where in the third equality we use second and third equations of Definition \ref{def:HOsmash}, while the fact that  $\mathrm{d}_{\#_\sigma}^{2}=0$ follows in complete analogy to the tensor product DC. Hence $(\Omega^{\bullet}(B\#_{\sigma}H),\wedge,\mathrm{d}_{\#_\sigma})$ is a differential graded algebra such that $\Omega^0(B\#_{\sigma}H)=B\#_{\sigma}H$. Moreover, from Theorem \ref{canFODC} we know that $\Omega^1(B\#_{\sigma}H)$ is spanned by elements of the form $(b\otimes h)\mathrm{d}_{\#_\sigma}(b'\otimes h')$, for $b,b'\in B$ and $h,h'\in H$. Now, suppose that for a fixed $n>1$ $\Omega^{n-1}(B\#_{\sigma}H)$ is spanned by elements of the form $(b^{0}\otimes h^{0})\mathrm{d}_{\#_\sigma}(b^{1}\otimes h^{1})\wedge\cdot\cdot\cdot\wedge\mathrm{d}_{\#_\sigma}(b^{n-1}\otimes h^{n-1})$ for $b^{i}\in B$ and $h^{i}\in H$. A generic element in $\Omega^{n}(B\#_{\sigma}H)$ is a sum of elements in $\Omega^{k}(B)\otimes\Omega^{l}(H)$ with $k,l\in\{0,\ldots,n\}$ such that $k+l=n$ and then, by using the "surjectivity condition" of $\Omega^{k}(B)$ and $\Omega^{l}(H)$, sum of elements of the form $(b^{0}\mathrm{d}_{B}b^{1}\wedge\mathrm{d}_{B}b^{2}\wedge\cdot\cdot\cdot\wedge\mathrm{d}_{B}b^{k})\otimes(h^{0}\mathrm{d}_{H}h^{1}\wedge\cdot\cdot\cdot\wedge\mathrm{d}_{H}h^{l})$. Then, we have 
\[
\begin{split}
    (b^{0}\mathrm{d}_{B}b^{1}\otimes1_{H})&\wedge((\mathrm{d}_{B}b^{2}\wedge\cdot\cdot\cdot\wedge\mathrm{d}_{B}b^{k})\otimes(h^{0}\mathrm{d}_{H}h^{1}\wedge\cdot\cdot\cdot\wedge\mathrm{d}_{H}h^{l}))\\
    &=b^{0}\mathrm{d}_{B}b^{1}\wedge(1_{H}\cdot\mathrm{d}_{B}b^{2}\wedge\cdot\cdot\cdot\wedge\mathrm{d}_{B}b^{k})\sigma(1_{H}\otimes(h^{0}\mathrm{d}_{H}h^{1}\wedge\cdot\cdot\cdot\wedge\mathrm{d}_{H}h^{l})_{-1})\otimes1_{H}(h^{0}\mathrm{d}_{H}h^{1}\wedge\cdot\cdot\cdot\wedge\mathrm{d}_{H}h^{l})_{0}\\
    &=(b^{0}\mathrm{d}_{B}b^{1}\wedge\mathrm{d}_{B}b^{2}\wedge\cdot\cdot\cdot\wedge\mathrm{d}_{B}b^{k})\otimes(h^{0}\mathrm{d}_{H}h^{1}\wedge\cdot\cdot\cdot\wedge\mathrm{d}_{H}h^{l})
\end{split}
\]
and $(\mathrm{d}_{B}b^{2}\wedge\cdot\cdot\cdot\wedge\mathrm{d}_{B}b^{k})\otimes(h^{0}\mathrm{d}_{H}h^{1}\wedge\cdot\cdot\cdot\wedge\mathrm{d}_{H}h^{l})$ is in $\Omega^{n-1}(B\#_{\sigma}H)$, which is by hypothesis a finite sum of elements of the aforementioned form. Since $b^{0}\mathrm{d}_{B}b^{1}\otimes1_{H}=(b^{0}\otimes1_{H})\mathrm{d}_{\#_\sigma}(b^{1}\otimes1_{H})$, we can conclude by induction that $\Omega^{n}(B\#_{\sigma}H)$ is spanned by elements of the form $(b^{0}\otimes h^{0})\mathrm{d}_{\#_\sigma}(b^{1}\otimes h^{1})\wedge\cdot\cdot\cdot\wedge\mathrm{d}_{\#_\sigma}(b^{n}\otimes h^{n})$ with $b^{i}\in B$ and $h^{i}\in H$. Thus, we have obtained that $(\Omega^{\bullet}(B\#_{\sigma}H),\mathrm{d}_{\#_\sigma})$ is a DC on $B\#_{\sigma}H$ and it is right $H$-covariant since the DC $(\Omega^\bullet(H),\mathrm{d}_H)$ is right covariant. 

Finally, we show that the algebra map $\rho:B\#_{\sigma}H\to(B\#_{\sigma}H)\otimes H$, $b\otimes h\mapsto b\otimes h_{1}\otimes h_{2}$ is differentiable. Writing $\Omega^{\bullet}(\Delta)(\gamma)=\gamma_{[1]}\otimes\gamma_{[2]}$, we define $\Omega^{\bullet}(\rho):\Omega^{\bullet}(B\#_{\sigma}H)\to\Omega^{\bullet}((B\#_{\sigma}H)\otimes H)$ as $\Omega^{\bullet}(\rho)(\omega\otimes\gamma)=\omega\otimes\gamma_{[1]}\otimes\gamma_{[2]}$ such that $\Omega^{0}(\rho)=\rho$.

Then, $\Omega^{\bullet}(\rho)$ is an algebra map. Indeed, given $\omega$, $\omega'$, $\gamma$, $\gamma'$ as before, we obtain
\[
\begin{split}
&\Omega^{\bullet}(\rho)((\omega\otimes\gamma)\wedge(\omega'\otimes\gamma'))=(-1)^{ji'}\Omega^{\bullet}(\rho)(\omega\wedge(\gamma_{-2}\cdot\omega')\sigma(\gamma_{-1}\otimes\gamma'_{-1})\otimes\gamma_{0}\wedge\gamma'_{0})\\
&=(-1)^{ji'}\omega\wedge(\gamma_{-2}\cdot\omega')\sigma(\gamma_{-1}\otimes\gamma'_{-1})\otimes(\gamma_{0}\wedge\gamma'_{0})_{[1]}\otimes(\gamma_{0}\wedge\gamma'_{0})_{[2]}\\
&\overset{\eqref{p1}}{=}\sum_{\substack{k+l=j\\ k'+l'=j'}}{(-1)^{ji'+lk'}\omega\wedge(\gamma_{-2}\cdot\omega')\sigma(\gamma_{-1}\otimes\gamma'_{-1})\otimes(\gamma_{0,<1,k>}\wedge\gamma'_{0,<1,k'>})\otimes(\gamma_{0,<2,l>}\wedge\gamma'_{0,<2,l'>}})\\
&=\sum_{\substack{k+l=j\\ k'+l'=j'}}{(-1)^{ki'+l(i'+k')}\omega\wedge(\gamma_{<1,k>,-2}\cdot\omega')\sigma(\gamma_{<1,k>,-1}\otimes\gamma'_{<1,k'>,-1})\otimes(\gamma_{<1,k>,0}\wedge\gamma'_{<1,k'>,0})\otimes(\gamma_{<2,l>}\wedge\gamma'_{<2,l'>})}\\
&=\sum_{\substack{k+l=j\\ k'+l'=j'}}{(-1)^{l(i'+k')}(\omega\otimes\gamma_{<1,k>})\wedge(\omega'\otimes\gamma'_{<1,k'>})\otimes(\gamma_{<2,l>}\wedge\gamma'_{<2,l'>})}\\
&=\sum_{\substack{k+l=j\\ k'+l'=j'}}(\omega\otimes\gamma_{<1,k>}\otimes\gamma_{<2,l>})\tilde{\wedge}(\omega'\otimes\gamma'_{<1,k'>}\otimes\gamma'_{<2,l'>})\\
&=\Omega^\bullet(\rho)(\omega\otimes\gamma)\tilde{\wedge}\Omega^\bullet(\rho)(\omega'\otimes\gamma'),
\end{split}
\]
where we used 
\eqref{p2} for $\gamma$ and $\gamma'$
in the fourth equation and the product
\begin{align*}
    \tilde{\wedge}\colon\big(\Omega^\bullet(B\#_\sigma H)\otimes\Omega^\bullet(H)\big)^{\otimes 2}&\to\Omega^\bullet(B\#_\sigma H)\otimes\Omega^\bullet(H)\\
    (\vartheta\otimes\gamma)\otimes(\vartheta'\otimes\gamma')&\mapsto(-1)^{|\vartheta'||\gamma|}(\vartheta\wedge\vartheta')\otimes(\gamma\wedge\gamma')
\end{align*}
in the sixth equation. Furthermore, from $\mathrm{d}_{\otimes}\circ\Omega^{\bullet}(\Delta)=\Omega^{\bullet}(\Delta)\circ\mathrm{d}_{H}$, we obtain that
\[
\begin{split}
\Omega^{\bullet}(\rho)\mathrm{d}_{\#_\sigma}(\omega\otimes\gamma)&=\Omega^{\bullet}(\rho)(\mathrm{d}_{B}\omega\otimes\gamma+(-1)^{|\omega|}\omega\otimes\mathrm{d}_{H}\gamma)=\mathrm{d}_{B}\omega\otimes\gamma_{[1]}\otimes\gamma_{[2]}+(-1)^{|\omega|}\omega\otimes(\mathrm{d}_{H}\gamma)_{[1]}\otimes(\mathrm{d}_{H}\gamma)_{[2]}\\&=\mathrm{d}_{B}\omega\otimes\gamma_{[1]}\otimes\gamma_{[2]}+(-1)^{|\omega|}\omega\otimes\mathrm{d}_{\otimes}(\gamma_{[1]}\otimes\gamma_{[2]})=\mathrm{d}_{\otimes}(\omega\otimes\gamma_{[1]}\otimes\gamma_{[2]})\\&=\mathrm{d}_{\otimes}\Omega^{\bullet}(\rho)(\omega\otimes\gamma),
\end{split}
\]
hence $\Omega^{\bullet}(\rho)\circ\mathrm{d}_{\#_\sigma}=\mathrm{d}_{\otimes}\circ\Omega^{\bullet}(\rho)$. Thus, $\rho$ is differentiable. 
\end{proof}

As one easily verifies, the above crossed product DC $(\Omega^{\bullet}(B\#_{\sigma}H),\mathrm{d}_{\#_{\sigma}})$ reduces to the smash product DC $(\Omega^{\bullet}(B\#H),\mathrm{d}_{\#})$ of \cite[Theorem 3.5]{PS} in case $\sigma$ is the trivial 2-cocycle. Moreover, we observe that the DC $\Omega^{\bullet}(B\#_{\sigma}H)$ restricted to zero-forms on $H$ coincides with the crossed product algebra $\Omega^{\bullet}(B)\#_{\sigma}H$. The latter can be understood as a crossed product of differential graded algebras if we consider the trivial DC on $H$.

\subsection{Quantum principal bundle and connections}\label{Sec3.4}

Now we study some constructions related to the FODC on the crossed product algebra $B\#_{\sigma}H$. In this subsection we show that there is a (regular and strong) QPB on $B\#_{\sigma}H$ with respect to this calculus. Afterwards, we study bimodule covariant derivatives on associated bundles of the crossed product algebra. Furthermore, we compute the noncommutative de Rham cohomology of $B\#_{\sigma}H$ from the cohomologies of $B$ and $H$ via the Leray--Serre spectral sequence. The latter employs higher order vertical maps and Atiyah sequences.

We consider $B\#_\sigma H$ as right $H$-comodule algebra via $\rho:=\mathrm{Id}_{B}\otimes\Delta$ and the right $H$-covariant FODC $(\Omega^1(B\#_{\sigma}H),\mathrm{d}_{\#_\sigma})$ on it given in Theorem \ref{canFODC}, when $(\Omega^1(B),\mathrm{d}_{B})$ is a $\sigma$-twisted $H$-module FODC on $B$ and $(\Omega^1(H),\mathrm{d}_{H})$ is a bicovariant FODC on $H$. Recalling that $(B\#_{\sigma}H)^{\mathrm{co}H}=B\otimes\Bbbk1_{H}$, the induced pullback calculus on $B\otimes\Bbbk1_{H}$ is given by
\[
\Omega^1(B\otimes\Bbbk1_{H})=\mathrm{span}_{\Bbbk}\{(b\otimes1_{H})\mathrm{d}_{\#_\sigma}(b'\otimes1_{H})\ |\ b,b'\in B\}=\mathrm{span}_{\Bbbk}\{b\mathrm{d}_{B}b'\otimes1_{H}\ |\ b,b'\in B\}=\Omega^1(B)\otimes\Bbbk1_{H}
\]
with differential given by the restriction and corestriction of $\mathrm{d}_{\#_\sigma}$. The horizontal forms are 
$$
\Omega^1(B\#_{\sigma}H)_{\mathrm{hor}}:=(B\#_{\sigma}H)(\Omega^1(B)\otimes\Bbbk1_{H})(B\#_{\sigma}H)=\Omega^1(B)\otimes H.
$$
The last equality holds, since clearly $\Omega^1(B\#_{\sigma}H)_{\mathrm{hor}}\subseteq\Omega^1(B)\otimes H$ and, given $b\mathrm{d}_{B}b'\otimes h$ for $b,b'\in B$ and $h\in H$
\[
((b\otimes1_{H})\cdot(\mathrm{d}_{B}b'\otimes1_{H}))\cdot(1_{B}\otimes h)=(b\mathrm{d}_{B}b'\otimes1_{H})\cdot(1_{B}\otimes h)=b\mathrm{d}_{B}b'\varepsilon(h_{1})\otimes h_{2}=b\mathrm{d}_{B}b'\otimes h.
\]
We prove that the horizontal forms fit into a short exact sequence, forming a QPB. This QPB is automatically regular, in the sense that it descends from a universal QPB with a strong connection.

\begin{theorem}\label{Theorem:QPB}
    Given $(\Omega^1(H),\mathrm{d}_{H})$ a bicovariant FODC  on $H$ and $(\Omega^1(B),\mathrm{d}_{B})$ a $\sigma$-twisted $H$-module FODC on $B$, there is a regular and strong QPB on the crossed product algebra $B\#_{\sigma}H$. Explicitly,
\begin{equation}\label{Atiyahseq}
0\longrightarrow\Omega^1(B)\otimes H\overset{\iota_1}\longrightarrow\Omega^1(B\#_{\sigma}H)\overset{\mathrm{ver}}\longrightarrow(B\#_{\sigma}H)\otimes{}^{\mathrm{co}H}\Omega^1(H)\longrightarrow0
\end{equation}
    is an exact sequence in $_{B\#_{\sigma}H}\mm$, where $\iota_1$ is the inclusion and $(\Omega^1(B\#_{\sigma}H),\mathrm{d}_{\#_\sigma})$ is the right $H$-covariant FODC on $B\#_{\sigma}H$, constructed in Theorem \ref{canFODC}.
\end{theorem}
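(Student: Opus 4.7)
The plan is to verify that $\mathrm{ver}$ is well-defined, make the Atiyah sequence \eqref{Atiyahseq} exact, and then exhibit an explicit strong connection, after which regularity is automatic from the cleft structure. Well-definedness of $\mathrm{ver}$ is immediate from Theorem \ref{canFODC}, which showed that $\rho$ is differentiable for the crossed product calculus: by the discussion preceding \eqref{Atiyahseq'}, this is equivalent to $\mathrm{ver}$ being well-defined on the right $H$-covariant calculus, and one has the factorization $\mathrm{ver}=\phi\circ\pi_2\circ\widehat{\rho}$, where $\widehat{\rho}$ was computed in the proof of Theorem \ref{canFODC} as
\[
\widehat{\rho}(\beta\otimes h+b\otimes\gamma)=\beta\otimes h_1\otimes h_2+b\otimes\gamma_0\otimes\gamma_1+b\otimes\gamma_{-1}\otimes\gamma_0.
\]
Among these three summands, only the last one lies in $(B\#_\sigma H)\otimes\Omega^1(H)$, so $\pi_2\widehat{\rho}(\alpha)=(b\otimes\gamma_{-1})\otimes\gamma_0$ depends only on the $B\otimes\Omega^1(H)$ component of $\alpha$. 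This already shows $\mathrm{ver}$ annihilates $\Omega^1(B)\otimes H$, giving $\mathrm{Im}(\iota_1)\subseteq\ker(\mathrm{ver})$; injectivity of $\iota_1$ is clear.

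For the reverse inclusion, suppose $\mathrm{ver}(\alpha)=0$ for $\alpha=\beta\otimes h+b\otimes\gamma$. Since $\phi\colon(B\#_\sigma H)\square_H\Omega^1(H)\to(B\#_\sigma H)\otimes{}^{\mathrm{co}H}\Omega^1(H)$ is an isomorphism, this forces $(b\otimes\gamma_{-1})\otimes\gamma_0=0$ in $(B\#_\sigma H)\otimes\Omega^1(H)$; applying $\mathrm{Id}_B\otimes\varepsilon\otimes\mathrm{Id}_{\Omega^1(H)}$ together with $(\varepsilon\otimes\mathrm{Id})\lambda=\mathrm{Id}_{\Omega^1(H)}$ forces $b\otimes\gamma=0$, so $\alpha\in\Omega^1(B)\otimes H=\mathrm{Im}(\iota_1)$. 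Surjectivity of $\mathrm{ver}$ is established by exhibiting explicit preimages: for any $\gamma\in{}^{\mathrm{co}H}\Omega^1(H)$, the element $1_B\otimes\gamma\in B\otimes\Omega^1(H)\subseteq\Omega^1(B\#_\sigma H)$ satisfies $\mathrm{ver}(1_B\otimes\gamma)=(1_B\otimes 1_H)\otimes\gamma$, since $\phi$ reduces to the identity on coinvariants ($S(\gamma_{-1})\gamma_0=\gamma$ whenever $\gamma$ is left coinvariant). Left $B\#_\sigma H$-linearity of $\mathrm{ver}$ then yields the entire image $(B\#_\sigma H)\otimes{}^{\mathrm{co}H}\Omega^1(H)$.

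For strongness and regularity I exhibit a strong connection as the canonical projection $\pi\colon\Omega^1(B\#_\sigma H)\twoheadrightarrow B\otimes\Omega^1(H)\hookrightarrow\Omega^1(B\#_\sigma H)$. By Lemma \ref{bimoduleconstruction} the decomposition $\Omega^1(B\#_\sigma H)=(\Omega^1(B)\otimes H)\oplus(B\otimes\Omega^1(H))$ is one of right $H$-covariant left $B\#_\sigma H$-modules, so $\pi$ is left $B\#_\sigma H$-linear and right $H$-colinear; clearly $\pi^2=\pi$ and $\ker(\pi)=\Omega^1(B)\otimes H=\Omega^1_\mathrm{hor}(B\#_\sigma H)$ by the exactness just established. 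Strongness of $\pi$ is immediate from
\[
(\mathrm{Id}-\pi)\mathrm{d}_{\#_\sigma}(b\otimes h)=\mathrm{d}_Bb\otimes h=(\mathrm{d}_Bb\otimes 1_H)(1_B\otimes h)\in\Omega^1(B)(B\#_\sigma H).
\]
Regularity is automatic since $B\subseteq B\#_\sigma H$ is cleft: by the preliminaries the universal QPB admits the canonical strong connection $\omega^u(h)=j^{-1}(h_1)\otimes j(h_2)$, and our crossed product FODC is a quotient of the universal FODC, so the QPB descends from the universal one with strong connection. Finally, strongness of the QPB ($A\Omega^1(B)=\Omega^1(B)A$) follows from the existence of the strong connection $\pi$ together with the direct identification $\Omega^1(B)(B\#_\sigma H)=\Omega^1(B)\otimes H=\Omega^1_\mathrm{hor}(B\#_\sigma H)$, as discussed in \cite[Section 5.4.2]{BM}.

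The main obstacle I anticipate is the identification $\ker(\mathrm{ver})=\Omega^1(B)\otimes H$: although conceptually the framework reduces it to a statement about the cotensor--coinvariants isomorphism $\phi$ and injectivity of $\lambda$, it cannot be read off merely from the action of $\mathrm{ver}$ on generators but requires using the explicit form of $\widehat{\rho}$ relative to the direct sum decomposition of Lemma \ref{bimoduleconstruction}, together with the fact that the $b\otimes\gamma$ component contributes only through its left coaction.
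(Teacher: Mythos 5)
Your proposal is correct and follows essentially the same route as the paper: you factor $\mathrm{ver}=\phi\circ\pi'_{2}\circ\widehat{\rho}$ through the projection onto $B\otimes\Omega^1(H)$ and invert the residual map (your counit-retraction argument for injectivity and the coinvariant preimages $1_B\otimes\gamma$ plus left $B\#_\sigma H$-linearity for surjectivity encode exactly the paper's mutually inverse pair $p$ and $g$), you deduce regularity from cleftness just as the paper does, and your projection connection $\iota_2\circ\pi_2$ coincides with the canonical strong connection $c$ of Proposition \ref{Prop:SC} via $\pi=c\circ\mathrm{ver}$. The only divergence is at strongness of the bundle, where the paper verifies $(\Omega^1(B)\otimes H)^{\mathrm{co}H}=\Omega^1(B)\otimes\Bbbk 1_{H}$ and invokes \cite[Corollary 5.53]{BM} precisely (noting that bijectivity of the antipode is needed), whereas you appeal to Section 5.4.2 of that reference more loosely; be aware that the inclusion $(B\#_\sigma H)\Omega^1(B)\subseteq\Omega^1(B)(B\#_\sigma H)$ does not follow from the strong connection alone, since $\pi$ is only left $A$-linear, so that corollary (or a direct computation using the identity $h\cdot\beta=j(h_1)\beta j^{-1}(h_2)$ in $\Omega^1(B\#_\sigma H)$) is genuinely needed at that step.
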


\begin{proof}
Since we have shown in Theorem \ref{canFODC} that the right $H$-coaction $\rho:B\#_{\sigma}H\to(B\#_{\sigma}H)\otimes H$ is differentiable we have that $\mathrm{ver}=\phi\circ\pi'_{2}\circ\widehat{\rho}$ is well-defined, where 
\begin{align*}
    \widehat{\rho}:\Omega^1(B\#_{\sigma}H)
    &\to\Omega^1((B\#_{\sigma}H)\otimes H),\\
    \beta\otimes h+b\otimes\gamma&\mapsto\beta\otimes h_{1}\otimes h_{2}+b\otimes\gamma_{0}\otimes\gamma_{1}+b\otimes\gamma_{-1}\otimes\gamma_{0},
\end{align*}
$\pi'_{2}:\Omega^1((B\#_{\sigma}H)\otimes H)\to(B\#_{\sigma}H)\otimes\Omega^1(H)$ is the canonical projection and 
$$
\phi:(B\#_{\sigma}H)\square_{H}\Omega^1(H)\to(B\#_{\sigma}H)\otimes{}^{\mathrm{co}H}\Omega^1(H),\qquad
b\otimes h\otimes\gamma\mapsto b\otimes h\otimes S(\gamma_{-1})\gamma_{0}
$$
is an isomorphism. Thus, we obtain $\mathrm{ver}(\beta\otimes h+b\otimes\gamma)=b\otimes\gamma_{-2}\otimes S(\gamma_{-1})\gamma_{0}$, which we can rewrite as $\mathrm{ver}=p\circ\pi_{2}$, where $\pi_{2}:\Omega^1(B\#_{\sigma}H)\to B\otimes\Omega^1(H)$ is the canonical projection and $p:B\otimes\Omega^1(H)\to(B\#_{\sigma}H)\otimes{}^{\mathrm{co}H}\Omega^1(H)$ is the linear map defined by $p(b\otimes\gamma):=b\otimes\gamma_{-2}\otimes S(\gamma_{-1})\gamma_{0}$.
Observe that $p$ is left $B\#_{\sigma}H$-linear, since we have 
\[
\begin{split}
p((b'\otimes h')\cdot(b\otimes\gamma))&=p(b'(h'_{1}\cdot b)\sigma(h'_{2}\otimes\gamma_{-1})\otimes h'_{3}\gamma_{0})\\&=b'(h'_{1}\cdot b)\sigma(h'_{2}\otimes\gamma_{-3})\otimes h'_{3}\gamma_{-2}\otimes S(h'_{4}\gamma_{-1})h'_{5}\gamma_{0}\\&=
b'(h'_{1}\cdot b)\sigma(h'_{2}\otimes\gamma_{-3})\otimes h'_{3}\gamma_{-2}\otimes S(\gamma_{-1})\gamma_{0}\\&=(b'\otimes h')(b\otimes\gamma_{-2})\otimes S(\gamma_{-1})\gamma_{0}
\end{split}
\]
for all $b,b'\in B$, $h'\in H$ and $\gamma\in\Omega^1(H)$.

We show that $p$ is bijective in order to obtain that $\mathrm{ver}$ is surjective and that $\mathrm{ker}(\mathrm{ver})=\mathrm{ker}(\pi_{2})=\Omega^1(B)\otimes H$. We define the linear map $g:(B\#_{\sigma}H)\otimes{}^{\mathrm{co}H}\Omega^1(H)\to B\otimes\Omega^1(H)$ by $g(b\otimes h\otimes\gamma):=b\otimes h\gamma$. 
The morphism $g$ is the inverse of $p$. Indeed, given $b\otimes h\otimes\gamma\in(B\#_{\sigma}H)\otimes{}^{\mathrm{co}H}\Omega^1(H)$, we have
\[
\begin{split}
pg(b\otimes h\otimes\gamma)&=p(b\otimes h\gamma)=b\otimes(h\gamma)_{-2}\otimes S((h\gamma)_{-1})(h\gamma)_{0}=b\otimes h_{1}\gamma_{-2}\otimes S(h_{2}\gamma_{-1})h_{3}\gamma_{0}\\&=b\otimes h_{1}\otimes S(h_{2})h_{3}\gamma=b\otimes h\otimes\gamma
\end{split}
\]
since $\gamma$ is in $^{\mathrm{co}H}\Omega^1(H)$ and, given $b\otimes\gamma\in B\otimes\Omega^1(H)$, we obtain
\[
gp(b\otimes\gamma)=g(b\otimes\gamma_{-2}\otimes S(\gamma_{-1})\gamma_{0})=b\otimes\gamma_{-2}S(\gamma_{-1})\gamma_{0}=b\otimes\varepsilon(\gamma_{-1})\gamma_{0}=b\otimes\gamma. 
\]
Thus, $p$ is bijective and the sequence \eqref{Atiyahseq} is exact in $_{B\#_{\sigma}H}\mm$, hence we have a QPB on the crossed product algebra $B\#_{\sigma}H$. Since crossed product algebras correspond to cleft extensions, the regularity of the QPB follows.
Finally, observe that 
$(\Omega^1(B)\otimes H)^{\mathrm{co}H}=\Omega^1(B)\otimes H^{\mathrm{co}H}=\Omega^1(B)\otimes\Bbbk1_{H}$ and then, using \cite[Corollary 5.53]{BM}, the QPB 
is strong 
(the bijectivity of the antipode of $H$ is needed here).
\end{proof}
\begin{remark}
    Note that clearly $(\Omega^1(B)\otimes\Bbbk1_{H})(B\#_{\sigma}H)=\Omega^1(B)\otimes H$ since every $\beta\otimes h\in\Omega^1(B)\otimes H$ can be written as $(\beta\otimes1_{H})\cdot(1_{B}\otimes h)$. Since the QPB of Theorem \ref{Theorem:QPB} is strong we also obtain $(B\#_{\sigma}H)(\Omega^1(B)\otimes\Bbbk1_{H})=\Omega^1(B)\otimes H$.
\end{remark}

We show that the above QPB exhibits a canonical strong connection. We recall that a connection is a section $c\colon(B\#_\sigma H)\otimes{}^{\mathrm{co}H}\Omega^1(H)\to\Omega^1(B\#_\sigma H)$ in ${}_{B\#_\sigma H}\mathfrak{M}^H$ of the vertical map and it is strong if $(\mathrm{Id}-c\circ\mathrm{ver})\mathrm{d}_{\#_\sigma}(B\#_{\sigma}H)\subseteq\Omega^1(B)(B\#_{\sigma}H)$.
\begin{proposition}\label{Prop:SC}
The QPB on the crossed product algebra $B\#_\sigma H$ of Theorem \ref{Theorem:QPB} admits a strong connection
\begin{align*}
    c\colon(B\#_\sigma H)\otimes{}^{\mathrm{co}H}\Omega^1(H)\to\Omega^1(B\#_\sigma H),\qquad
    (b\otimes h)\otimes\gamma\mapsto b\otimes h\gamma.
\end{align*}
\end{proposition}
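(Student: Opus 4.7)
The plan is to verify the four defining properties of a strong connection: $c$ is left $B\#_\sigma H$-linear, right $H$-colinear, a right-inverse of the vertical map, and satisfies the strong-connection condition $(\mathrm{Id}-c\circ\mathrm{ver})\mathrm{d}_{\#_\sigma}(B\#_\sigma H)\subseteq\Omega^1(B)\otimes H$. The key observation is that $c$ is essentially the map $g\colon(B\#_\sigma H)\otimes{}^{\mathrm{co}H}\Omega^1(H)\to B\otimes\Omega^1(H)$, $(b\otimes h)\otimes\gamma\mapsto b\otimes h\gamma$, from the proof of Theorem \ref{Theorem:QPB}, composed with the canonical inclusion $B\otimes\Omega^1(H)\hookrightarrow\Omega^1(B\#_\sigma H)$. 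Since there we showed $\mathrm{ver}=p\circ\pi_2$, with $\pi_2$ the projection onto $B\otimes\Omega^1(H)$ and $p$ inverse to $g$, the splitting identity $\mathrm{ver}\circ c=p\circ g=\mathrm{Id}$ is immediate.

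Left $B\#_\sigma H$-linearity follows from the left coinvariance of $\gamma\in{}^{\mathrm{co}H}\Omega^1(H)$: the relation $\gamma_{-1}\otimes\gamma_0=1_H\otimes\gamma$ implies $(h\gamma)_{-1}\otimes(h\gamma)_0=h_1\otimes h_2\gamma$, so applying the left action \eqref{leftact} to $b\otimes h\gamma$ gives
\[
(b'\otimes h')\cdot c\bigl((b\otimes h)\otimes\gamma\bigr)=b'(h'_1\cdot b)\sigma(h'_2\otimes h_1)\otimes h'_3h_2\gamma=c\bigl((b'\otimes h')(b\otimes h)\otimes\gamma\bigr).
\]
Right $H$-colinearity is an equally short check: both $\rho'\circ c$ and $(c\otimes\mathrm{Id}_H)$ composed with the diagonal coaction send $(b\otimes h)\otimes\gamma$ to $b\otimes h_1\gamma_0\otimes h_2\gamma_1$, using that the right $H$-coaction on $\Omega^1(H)$ is compatible with the $H$-bimodule structure.

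For strongness, evaluate $c\circ\mathrm{ver}$ on $\mathrm{d}_{\#_\sigma}(b\otimes h)=\mathrm{d}_Bb\otimes h+b\otimes\mathrm{d}_Hh$. The first summand already lies in $\mathrm{ker}(\mathrm{ver})=\Omega^1(B)\otimes H$. For the second, left $H$-colinearity of $\mathrm{d}_H$ gives $\lambda(\mathrm{d}_Hh)=h_1\otimes\mathrm{d}_Hh_2$, whence $\mathrm{ver}(b\otimes\mathrm{d}_Hh)=b\otimes h_1\otimes S(h_2)\mathrm{d}_Hh_3$. Applying $c$ and collapsing via the antipode identity $h_1S(h_2)=\varepsilon(h_1)1_H$ produces $c(\mathrm{ver}(b\otimes\mathrm{d}_Hh))=b\otimes\mathrm{d}_Hh$. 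Consequently
\[
(\mathrm{Id}-c\circ\mathrm{ver})\mathrm{d}_{\#_\sigma}(b\otimes h)=\mathrm{d}_Bb\otimes h\in\Omega^1(B)\otimes H,
\]
which by the remark following Theorem \ref{Theorem:QPB} coincides with $\Omega^1(B\otimes\Bbbk1_H)(B\#_\sigma H)$, establishing strongness. I expect no serious obstacle: every computation collapses via the coinvariance of $\gamma$ and the antipode axiom, so the main task is simply the bookkeeping with Sweedler indices.
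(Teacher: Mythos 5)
Your proposal is correct and follows essentially the same route as the paper: both define $c=\iota_2\circ g$, obtain $\mathrm{ver}\circ c=p\circ g=\mathrm{Id}$ from the factorization $\mathrm{ver}=p\circ\pi_2$ with $p=g^{-1}$ established in Theorem \ref{Theorem:QPB}, verify left $B\#_\sigma H$-linearity and right $H$-colinearity of $g$ via the coinvariance relation $(h\gamma)_{-1}\otimes(h\gamma)_0=h_1\otimes h_2\gamma$, and reduce strongness to the identical computation $(\mathrm{Id}-c\circ\mathrm{ver})\mathrm{d}_{\#_\sigma}(b\otimes h)=\mathrm{d}_Bb\otimes h$ collapsed by $h_1S(h_2)=\varepsilon(h)1_H$. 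Your identification of $\Omega^1(B)\otimes H$ with $\Omega^1(B\otimes\Bbbk 1_H)(B\#_\sigma H)$ via the remark after Theorem \ref{Theorem:QPB} matches the paper's final step $(\mathrm{d}_Bb\otimes 1_H)(1_B\otimes h)$.
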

\begin{proof}
Let us define $c:=\iota_2\circ g$, where $\iota_2\colon B\otimes\Omega^1(H)\to\Omega^{1}(B\#_\sigma H)$ is the canonical inclusion and $g\colon(B\#_{\sigma}H)\otimes{}^{\mathrm{co}H}\Omega^1(H)\to B\otimes\Omega^1(H)$, $b\otimes h\otimes\gamma\mapsto b\otimes h\gamma$.
We already know that $\iota_2$ is a morphism in $_{B\#_{\sigma}H}\mm^{H}$. Moreover, $g$ is left $B\#_{\sigma}H$-linear since
\[
\begin{split}
g((b'\otimes h')\cdot(b\otimes h\otimes\gamma))&=g(b'(h'_{1}\cdot b)\sigma(h'_{2}\otimes h_{1})\otimes h'_{3}h_{2}\otimes\gamma)=b'(h'_{1}\cdot b)\sigma(h'_{2}\otimes h_{1})\otimes h'_{3}h_{2}\gamma\\&=b'(h'_{1}\cdot b)\sigma(h'_{2}\otimes h_{1}\gamma_{-1})\otimes h'_{3}h_{2}\gamma_{0}=b'(h'_{1}\cdot b)\sigma(h'_{2}\otimes(h\gamma)_{-1})\otimes h'_{3}(h\gamma)_{0}\\&=(b'\otimes h')\cdot g(b\otimes h\otimes\gamma)
\end{split}
\]
for all $b,b'\in B$, $h,h'\in H$ and $\gamma\in{}^{\mathrm{co}H}\Omega^1(H)$ 
and right $H$-colinear, indeed 
\[
g(b\otimes h\otimes\gamma)_{0}\otimes g(b\otimes h\otimes\gamma)_{1}=b\otimes(h\gamma)_{0}\otimes(h\gamma)_{1}=b\otimes h_{1}\gamma_{0}\otimes h_{2}\gamma_{1}=(g\otimes\mathrm{Id}_{H})(b\otimes h_{1}\otimes\gamma_{0}\otimes h_{2}\gamma_{1}).
\]
Thus, $g$ 
is a morphism in $_{B\#_{\sigma}H}\mm^{H}$ and then also $\iota_2\circ g$ is in $_{B\#_{\sigma}H}\mm^{H}$. Since $\mathrm{ver}=p\circ\pi_2$, where $p\colon B\otimes\Omega^1(H)\to(B\#_{\sigma}H)\otimes{}^{\mathrm{co}H}\Omega^1(H)$, $b\otimes\gamma\mapsto b\otimes\gamma_{-2}\otimes S(\gamma_{-1})\gamma_{0}$ is the inverse of $g$, we obtain
\[
\text{ver}\circ\iota_{2}\circ g=p\circ\pi_{2}\circ\iota_{2}\circ g=p\circ g=\mathrm{Id},
\]
proving that $c=\iota_2\circ g$ is a connection. It remains to prove that $c$ is strong.

On elements $b\otimes h\in B\#_{\sigma}H$ the strong condition $(\mathrm{Id}-c\circ\mathrm{ver})\mathrm{d}_{\#_\sigma}(B\#_{\sigma}H)\subseteq\Omega^1(B)(B\#_{\sigma}H)$ reads $\mathrm{d}_{B}b\otimes h+b\otimes\mathrm{d}_{H}h-c(b\otimes h_{1}\otimes S(h_{2})\mathrm{d}_{H}h_{3})\in\Omega^1(B)(B\#_{\sigma}H)$. The latter is satisfied, since
\[
\mathrm{d}_{B}b\otimes h+b\otimes\mathrm{d}_{H}h-c(b\otimes h_{1}\otimes S(h_{2})\mathrm{d}_{H}h_{3})=\mathrm{d}_{B}b\otimes h+b\otimes\mathrm{d}_{H}h-b\otimes h_{1}S(h_{2})\mathrm{d}h_{3}=\mathrm{d}_{B}b\otimes h=(\mathrm{d}_{B}b\otimes1_{H})(1_{B}\otimes h)
\]
holds.
\end{proof}

\begin{remark}
As we recalled in the preliminaries, given a right $H$-comodule algebra $A$, $B\subseteq A$ is a faithfully flat Hopf-Galois extension if and only if there exists a strong connection on the universal FODC on $A$. This is also equivalent to the existence of a linear map $l:H\to A\otimes A$, $h\mapsto l(h):=h^{(1)}\otimes h^{(2)}$ such that the following equalities are satisfied for all $h\in H$:
\[
l(1_{H})=1_{A}\otimes1_{A},\qquad h^{(1)}h^{(2)}=\varepsilon(h)1_{A}, 
\]
\[
h^{(1)}\otimes(h^{(2)})_{0}\otimes(h^{(2)})_{1}=(h_{1})^{(1)}\otimes(h_{1})^{(2)}\otimes h_{2}, \qquad(h^{(1)})_{0}\otimes(h^{(1)})_{1}\otimes h^{(2)}=(h_{2})^{(1)}\otimes S(h_{1})\otimes(h_{2})^{(2)},
\]
see \cite[Theorem 6.19 and Theorem 6.20]{Hajac}. In case of the tensor product algebra $B\otimes H$, as well as the smash product algebra $B\#H$, such a morphism $l$ is given by $h\mapsto 1_{B}\otimes S(h_{1})\otimes 1_{B}\otimes h_{2}$, which descends to a strong connection $\bar{l}:{}^{\mathrm{co}H}\Omega^1(H)\to\Omega^1(B\#H)$, $\xi\mapsto1_{B}\otimes\xi$, see \cite[Example 5.55]{BM}. Note that $h\mapsto 1_{B}\otimes S(h)$ is the convolution inverse of the cleaving map $j:h\mapsto 1_{B}\otimes h$ in the trivial case. According to \cite[Corollary 5.14]{BM} strong connections on trivial extensions are equivalent to morphisms $\alpha:H^{+}\to\Omega^1(B)$. The previous choice corresponds to the trivial $\alpha$.

We show that the strong connection of Proposition \ref{Prop:SC} is the generalization of precisely this connection to cleft extensions and the crossed product calculus. In the crossed product case the convolution inverse of the cleaving map is given by $j^{-1}:h\mapsto\sigma^{-1}(S(h_{2})\otimes h_{3})\otimes S(h_{1})$. It follows that the map $l:=(j^{-1}\otimes j)\Delta:h\mapsto \sigma^{-1}(S(h_{2})\otimes h_{3})\otimes S(h_{1})\otimes1_{B}\otimes h_{4}$ satisfies the previous four properties. Hence, we obtain the strong connection $\bar{l}:H^{+}\to\Omega^1(B\#_{\sigma}H)$, $h\mapsto h^{(1)}\mathrm{d}_{\#_{\sigma}}h^{(2)}=j^{-1}(h_{1})\mathrm{d}_{\#_{\sigma}}(j(h_{2}))$. We compute for $h\in H^+$
\[
\begin{split}
    \bar{l}(h)&=(\sigma^{-1}(S(h_{2})\otimes h_{3})\otimes S(h_{1}))(1_{B}\otimes\mathrm{d}_{H}h_{4})=\sigma^{-1}(S(h_{2})\otimes h_{3})\sigma(S(h_{1})_{1}\otimes(\mathrm{d}_{H}h_{4})_{-1})\otimes S(h_{1})_{2}(\mathrm{d}_{H}h_{4})_{0}\\&=\sigma^{-1}(S(h_{3})\otimes h_{4})\sigma(S(h_{2})\otimes h_{5})\otimes S(h_{1})\mathrm{d}_{H}h_{6}=1_{B}\otimes S(h_{1})\mathrm{d}_{H}h_{2}=1_B\otimes\varpi(h),
\end{split}
\]
where $\varpi:H^+\to{}^{\mathrm{co}H}\Omega^1(H)$, $h\mapsto S(h_1)\mathrm{d}_Hh_2$ denotes the Cartan-Maurer form, a surjective map (see e.g. \cite[Theorem 2.26]{BM}).
Thus, $c((b\otimes h)\otimes\varpi(g))=b\otimes h\bar{l}(g)=b\otimes h\varpi(g)$, where $b\in B$, $h\in H$ and $g\in H^+$, in fact recovers the strong connection of Proposition \ref{Prop:SC}.
\end{remark}

Using the strong connection of Proposition \ref{Prop:SC} we can construct covariant derivatives of associated bundles of the crossed product QPB. Recall that, given a FODC $(\Omega^1(B),\mathrm{d}_{B})$ on an algebra $B$, a \textit{covariant derivative} on a left $B$-module $E$ is a linear map $\nabla_{E}\colon E\to\Omega^1(B)\otimes_{B}E$ obeying the left Leibniz rule 
$$
\nabla_{E}(b\cdot e)=\mathrm{d}_{B}b\otimes_B e+b\cdot\nabla_{E}e,
$$
for $e\in E$ and $b\in B$. In the notation of \cite[page 227]{BM} we denote by $_{B}\xi$ the category whose objects are given by pairs $(E,\nabla_{E})$ where $E$ is a left $B$-module and $\nabla_{E}$ is a covariant derivative on $E$ and morphisms are left $B$-linear maps intertwining $\nabla$. More explicitly, a morphism between two objects $(E,\nabla_{E})$ and $(K,\nabla_{K})$ in $_B\xi$ is a left $B$-linear map $T\colon E\to K$ such that $\nabla_{K}\circ T=(\mathrm{Id}\otimes_{B}T)\circ\nabla_{E}$.
If $E$ is a $B$-bimodule, a covariant derivative $\nabla_{E}:E\to\Omega^1(B)\otimes_{B}E$ on $E$ is called \textit{bimodule covariant derivative} if it is equipped with a $B$-bimodule map $\sigma_{E}:E\otimes_{B}\Omega^1(B)\to\Omega^1(B)\otimes_{B}E$ obeying $\nabla_{E}(e\cdot b)=\sigma_{E}(e\otimes_{B}\mathrm{d}_{B}b)+(\nabla_{E}e)\cdot b$, for $e\in E$ and $b\in B$. Note that $\sigma_{E}$ is not an additional data as, if it exists, it is uniquely determined by the stated condition. Hence, being a bimodule covariant derivative or not is a property of a covariant derivative $\nabla_{E}$ on a given bimodule $E$. A particular class of interest is given by the \textit{associated bundles} $E:=(A\otimes V)^{\mathrm{co}H}$, where $B:=A^{\mathrm{co}H}\subseteq A$ is a regular QPB and $V$ is a right $H$-comodule, see \cite[Definition A.3]{BrMa}. Here $E$ is a $B$-bimodule with left and right $B$-actions given by multiplication on $A$. 
Those admit associated covariant derivatives, see \cite[Proposition 5.48]{BM}. We give their explicit expression for the regular QPB on the crossed product algebra of Theorem \ref{Theorem:QPB}: they correspond to the differential on the base algebra.
\begin{proposition}\label{cor:derivative}
Given $V$ a right $H$-comodule, then the $B$-bimodule $E:=((B\#_{\sigma}H)\otimes V)^{\mathrm{co}H}$, with actions induced from the multiplication of $B\#_{\sigma}H$, acquires an associated covariant derivative $\nabla_{E}:E\to\Omega^1(B)\otimes_{B}E$ given by
\[
\nabla_{E}(b\otimes h\otimes v)=(\mathrm{Id}-c\circ\mathrm{ver})\mathrm{d}_{\#_\sigma}(b\otimes h)\otimes v=\mathrm{d}_{B}b\otimes h\otimes v,
\]
where $c$ is the canonical strong connection of Proposition \ref{Prop:SC}. Moreover, $\nabla_{E}$ is a bimodule covariant derivative with associated $\sigma_{E}:E\otimes_{B}\Omega^1(B)\to\Omega^1(B)\otimes_{B}E$ defined by $\sigma_{E}(b\otimes h\otimes v\otimes_{B}\beta):=b(h_{1}\cdot\beta)\otimes h_{2}\otimes v$.
\end{proposition}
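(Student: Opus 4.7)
The plan is to apply \cite[Proposition 5.48]{BM}, which produces an associated covariant derivative on $E$ out of the strong connection $c$ of Proposition \ref{Prop:SC}, and then to verify the explicit formulas by direct computation. As a preliminary, I would observe that the coaction $\rho=\mathrm{Id}_B\otimes\Delta$ on $B\#_\sigma H$ acts trivially on the $B$-factor, so the diagonal coaction on $(B\#_\sigma H)\otimes V$ gives $E=B\otimes(H\otimes V)^{\mathrm{co}H}$. The inherited $B$-bimodule structure specializes, using the crossed product formula \eqref{crossed} together with $\sigma(1_H\otimes h)=\varepsilon(h)1_B=\sigma(h\otimes 1_H)$, to
\begin{equation*}
b\cdot(b'\otimes h\otimes v)=bb'\otimes h\otimes v,\qquad (b'\otimes h\otimes v)\cdot b=b'(h_1\cdot b)\otimes h_2\otimes v.
\end{equation*}

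For the explicit formula, the associated covariant derivative arising from a strong connection is obtained by applying $(\mathrm{Id}-c\circ\mathrm{ver})\circ\mathrm{d}_{\#_\sigma}$ along the first tensor factor. The computation at the very end of the proof of Proposition \ref{Prop:SC} already yields $(\mathrm{Id}-c\circ\mathrm{ver})\mathrm{d}_{\#_\sigma}(b\otimes h)=\mathrm{d}_B b\otimes h$, so tensoring with $v$ proves the stated second equality. Well-definedness in $\Omega^1(B)\otimes_B E$ then follows by rewriting $\mathrm{d}_B b\otimes h\otimes v=\mathrm{d}_B b\otimes_B(1_B\otimes h\otimes v)$, where $1_B\otimes h\otimes v\in E$ whenever $b\otimes h\otimes v\in E$ since coinvariance only constrains the $H$- and $V$-slots. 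The left Leibniz rule is then immediate from the Leibniz rule for $\mathrm{d}_B$.

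To promote $\nabla_E$ to a bimodule covariant derivative I would check that $\sigma_E(b\otimes h\otimes v\otimes_B\beta):=b(h_1\cdot\beta)\otimes h_2\otimes v$ descends to the tensor product over $B$: for $b'\in B$ the computation
\begin{equation*}
\sigma_E\big((b\otimes h\otimes v)\cdot b'\otimes_B\beta\big)=b(h_1\cdot b')(h_2\cdot\beta)\otimes h_3\otimes v=b\big(h_1\cdot(b'\beta)\big)\otimes h_2\otimes v=\sigma_E(b\otimes h\otimes v\otimes_B b'\beta)
\end{equation*}
uses precisely axiom $ii.)$ of Definition \ref{TwistedBimon}, and $B$-bilinearity of $\sigma_E$ is verified by the same identity together with the left-action formula above. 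Finally, for the bimodule covariant derivative identity $\nabla_E(\xi\cdot b)=\sigma_E(\xi\otimes_B\mathrm{d}_B b)+(\nabla_E\xi)\cdot b$, expanding $\nabla_E\big(b'(h_1\cdot b)\otimes h_2\otimes v\big)=\mathrm{d}_B\big(b'(h_1\cdot b)\big)\otimes h_2\otimes v$ via the Leibniz rule for $\mathrm{d}_B$ and using the $H$-equivariance $\mathrm{d}_B(h_1\cdot b)=h_1\cdot\mathrm{d}_B b$ from \eqref{H-lin} produces the two summands on the right-hand side.

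The main obstacle is purely bookkeeping: ensuring that every formula passes to the tensor product over $B$ and lies in the correct subspace $E$, respectively $\Omega^1(B)\otimes_B E$. This is controlled entirely by the axioms of a $\sigma$-twisted $H$-module FODC encoded in Definition \ref{TwistedBimon} and Lemma \ref{twistedHmodcal}, so the proof reduces to a direct application of these compatibility identities and does not require any genuinely new input.
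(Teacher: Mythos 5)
Your proposal is correct and follows essentially the same route as the paper: existence of $\nabla_E$ via \cite[Proposition 5.48]{BM} with the strong connection of Proposition \ref{Prop:SC}, the explicit formula from the computation $(\mathrm{Id}-c\circ\mathrm{ver})\mathrm{d}_{\#_\sigma}(b\otimes h)=\mathrm{d}_B b\otimes h$, and direct verification of the $B$-bilinearity of $\sigma_E$ and the identity $\nabla_E(e\cdot b')=\sigma_E(e\otimes_B\mathrm{d}_Bb')+(\nabla_Ee)\cdot b'$ using axiom $ii.)$ of Definition \ref{TwistedBimon}, the Leibniz rule and \eqref{H-lin}. Your additional bookkeeping (the identification $E=B\otimes(H\otimes V)^{\mathrm{co}H}$ and the descent of $\sigma_E$ through $\otimes_B$) only makes explicit routine points the paper leaves implicit.
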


\begin{proof}
    The first part follows by applying \cite[Proposition 5.48]{BM} as we have discussed before. It remains to prove that $\sigma_{E}$ is a $B$-bimodule map obeying $\nabla_{E}(e\cdot b)=\sigma_{E}(e\otimes_{B}\mathrm{d}_{B}b)+(\nabla_{E}e)\cdot b$, for $e\in E$ and $b\in B$. We compute
\[
\begin{split}
    \sigma_{E}(b'\cdot(b\otimes h\otimes v\otimes_{B}\beta))&=\sigma_{E}(b'b\otimes h\otimes v\otimes_{B}\beta)=b'b(h_{1}\cdot\beta)\otimes h_{2}\otimes v=b'\cdot \sigma_{E}(b\otimes h\otimes v\otimes_{B}\beta),
\end{split}
\]
as well as
\[
\begin{split}
    \sigma_{E}((b\otimes h\otimes v\otimes_{B}\beta)\cdot b')&=\sigma_{E}(b\otimes h\otimes v\otimes_{B}\beta b')=b(h_{1}\cdot\beta b')\otimes h_{2}\otimes v\\&=b(h_{1}\cdot\beta)(h_{2}\cdot b')\otimes h_{3}\otimes v=\sigma_{E}(b\otimes h\otimes v\otimes_{B}\otimes\beta)\cdot b'
\end{split}
\]
for all $b\otimes h\otimes v\in E$, $b'\in B$ and $\beta\in\Omega^1(B)$.
Thus, $\sigma_{E}$ is a $B$-bimodule map. Moreover, we have 
\[
\begin{split}
    \nabla_{E}((b\otimes h\otimes v)\cdot b')&=\nabla_{E}(b(h_{1}\cdot b')\otimes h_{2}\otimes v)=\mathrm{d}_{B}(b(h_{1}\cdot b'))\otimes h_{2}\otimes v\\&=b\mathrm{d}_{B}(h_{1}\cdot b')\otimes h_{2}\otimes v+(\mathrm{d}_{B}b)(h_{1}\cdot b')\otimes h_{2}\otimes v\\&=b(h_{1}\cdot\mathrm{d}_{B}b')\otimes h_{2}\otimes v+(\mathrm{d}_{B}b\otimes h\otimes v)\cdot b'\\&=\sigma_{E}(b\otimes h\otimes v\otimes_{B}\mathrm{d}_{B}b')+\nabla_{E}(b\otimes h\otimes v)\cdot b'
\end{split}
\]
and then the thesis follows.
\end{proof}

We close this subsection by studying another application of the crossed product QPB. \medskip

\noindent\textbf{Leray--Serre spectral sequence}. 
The \textit{noncommutative de Rham cohomology} of a DC $(\Omega^\bullet,\mathrm{d})$ on an algebra $A$ is the graded algebra
\[
\mathrm{H}^{n}_{\mathrm{dR}}(A):=\frac{\mathrm{ker}(\mathrm{d}|_{\Omega^{n}})}{\mathrm{Im}(\mathrm{d}|_{\Omega^{n-1}})},
\]
where we understand $\mathrm{d}|_{\Omega^{n}}=0$ if $n<0$. In the following we show that the noncommutative de Rham cohomology of a crossed product algebra $B\#_\sigma H$ is determined by the noncommutative de Rham cohomology of the base algebra $B$ and of the structure Hopf algebra $H$. Namely, we prove that there is a spectral sequence with second page $E^{p,q}_{2}=\mathrm{H}_{\mathrm{dR}}^{p}(B)\otimes\mathrm{H}_{\mathrm{dR}}^{q}(H)$, which converges to $\mathrm{H}_{\mathrm{dR}}(B\#_{\sigma}H)$.

Recall that a \textit{spectral sequence} of degree $r_0\geq 0$ is a collection $(E_r^{\bullet,\bullet},D_r)_{r\geq r_0}$ of $\mathbb{Z}$-bigraded vector spaces $E_r^{\bullet,\bullet}=\bigoplus_{k,\ell\in\mathbb{Z}}E_r^{k,\ell}$ (where $E_r^{k,\ell}$ is assumed to be zero if either $k<0$ or $\ell<0$) and linear maps $D_r\colon E_r^{k,\ell}\to E_r^{k+r,\ell-r+1}$ such that $D_r\circ D_r=0$ and $E^{k,\ell}_{r+1}\cong\mathrm{H}^{k,\ell}(E_r,D_r)$, where
$$
\mathrm{H}^{k,\ell}(E_r,D_r):=\frac{\ker D_r}{\mathrm{Im}\,D_r|_{E_r^{k-r,\ell+r-1}}}.
$$
One calls $(E_r^{\bullet,\bullet},D_r)$ the $r^\mathrm{th}$ page of the spectral sequence. So the above means that the $(r+1)^\mathrm{th}$ page is isomorphic to the cohomology of the $r^\mathrm{th}$ page.

Now we proceed by constructing a filtration of $\Omega^\bullet(B\#_\sigma H)$, which will then provide the announced spectral sequence. Recall from Theorem \ref{thm:higerorderforms} that the right $H$-coaction $\rho\colon B\#_\sigma H\to(B\#_\sigma H)\otimes H$ is differentiable with differential $\Omega^\bullet(\rho)\colon\Omega^\bullet(B\#_\sigma H)\to\Omega^\bullet((B\#_\sigma H)\otimes H)$ and let us denote the graded components by $\rho^{n-k,k}\colon\Omega^n(B\#_\sigma H)\to\Omega^{n-k}(B\#_\sigma H)\otimes\Omega^k(H)$. We define for $m,n\geq 0$
$$
F^m(\Omega^n(B\#_\sigma H)):=\begin{cases}
\bigcap_{k>n-m}\ker\rho^{n-k,k} & \text{ if }m\leq n\\
0 & \text{ otherwise}
\end{cases}
$$
a filtration of $\Omega^\bullet(B\#_\sigma H)$.
The corresponding spectral sequence $(E_r^{\bullet,\bullet},D_r)_{r\geq 1}$ of degree 1 is constructed in the following way. Its first page is the $\mathbb{Z}$-bigraded vector space $E_1^{\bullet,\bullet}$, where
$$
E_1^{p,q}:=\frac{\ker\bigg(\mathrm{d}_{\#_{\sigma}}\colon\frac{F^p(\Omega^{p+q}(B\#_\sigma H))}{F^{p+1}(\Omega^{p+q}(B\#_\sigma H))}\to\frac{F^p(\Omega^{p+q+1}(B\#_\sigma H))}{F^{p+1}(\Omega^{p+q+1}(B\#_\sigma H))}\bigg)}{\mathrm{Im}\,\bigg(\mathrm{d}_{\#_{\sigma}}\colon\frac{F^p(\Omega^{p+q-1}(B\#_\sigma H))}{F^{p+1}(\Omega^{p+q-1}(B\#_\sigma H))}\to\frac{F^p(\Omega^{p+q}(B\#_\sigma H))}{F^{p+1}(\Omega^{p+q}(B\#_\sigma H))}\bigg)}.
$$
The differential $D_1\colon E_1^{p,q}\to E_1^{p+1,q}$ is the one induced from $\mathrm{d}_{\#_\sigma}$ on the quotient and higher pages are defined as the cohomology of the previous pages. According to \cite[Lemma 5.58]{BM} the spectral sequence $(E_r^{\bullet,\bullet},D_r)_{r\geq 1}$ converges to $\mathrm{H}_\mathrm{dR}(B\#_\sigma H)$.

In the rest of this subsection we show that the above spectral sequence coincides with famous Leray--Serre spectral sequence (see \cite[Theorem 4.66]{BM}).
In order to do this we have to introduce higher order analogues of the vertical map. We recall once more from Theorem \ref{thm:higerorderforms} that the right $H$-coaction $\rho\colon B\#_\sigma H\to(B\#_\sigma H)\otimes H$ is differentiable with 
\begin{align*}
    \Omega^\bullet(\rho)\colon\Omega^\bullet(B\#_\sigma H)&\to\Omega^\bullet((B\#_\sigma H)\otimes H),\\
    \omega\otimes\gamma&\mapsto\omega\otimes\gamma_{[1]}\otimes\gamma_{[2]},
\end{align*} 
where $\Omega^\bullet(\Delta)(\gamma)=\gamma_{[1]}\otimes\gamma_{[2]}$ is the differential of the coproduct $\Delta\colon H\to H\otimes H$ with graded components $\Omega^n(\Delta)(\gamma)=\sum\nolimits_{k+l=n}\gamma_{<1,k>}\otimes\gamma_{<2,l>}=\sum\nolimits_{m}\gamma_{<1,m>}\otimes\gamma_{<2,n-m>}$ for $\gamma\in\Omega^n(H)$. We denote the graded components of crossed product forms by $\omega^i\otimes\gamma^j\in\Omega^i(B)\otimes\Omega^j(H)$.
Following \cite[page 449]{BM} we define for all $n>0$ and $n\geq m\geq 0$
\begin{align*}
    \mathrm{ver}^{m,n-m}\colon\Omega^{n}(B\#_{\sigma}H)&\to\Omega^{m}(B\#_{\sigma}H)\otimes{}^{\mathrm{co}H}\Omega^{n-m}(H)\\
    \sum_{i+j=n}\omega^i\otimes\gamma^j&\mapsto\sum_{i+j=n}\omega^i\otimes\gamma^j_{<1,m>}\otimes S((\gamma^j_{<2,n-m>})_{-1})(\gamma^j_{<2,n-m>})_0,
\end{align*}
where we recall that $\gamma^j_{<1,m>}\otimes\gamma^j_{<2,n-m>}=0$ if $m>j$ or $n-m>j$. In other words, $\mathrm{ver}^{m,n-m}$ is defined as the composition $(\mathrm{Id}\otimes\phi^{n-m})\circ\pi^{m,n-m}\circ\Omega^n(\rho)$, where $\pi^{m,n-m}\colon\Omega^n((B\#_\sigma H)\otimes H)\to\Omega^m(B\#_\sigma H)\otimes\Omega^{n-m}(H)$ is the canonical projection and $\phi^{n-m}\colon\Omega^{n-m}(H)\to{}^{\mathrm{co}H}\Omega^{n-m}(H)$, $\gamma\mapsto S(\gamma_{-1})\gamma_0$ is the projection on left $H$-coinvariant elements.

Note that $\mathrm{ver}^{0,1}=\mathrm{ver}\colon\Omega^1(B\#_{\sigma}H)\to(B\#_{\sigma}H)\otimes{}^{\mathrm{co}H}\Omega^1(H)$ is the classical vertical map.
More in general, for $n>0$, the vertical map $\mathrm{ver}^{0,n}:\Omega^{n}(B\#_{\sigma}H)\to(B\#_{\sigma}H)\otimes{}^{\mathrm{co}H}\Omega^{n}(H)$ is given for $\omega^{i}\in\Omega^i(B)$, $\gamma^i\in\Omega^i(H)$,
$b\in B$ and $h\in H$ by
$$
\mathrm{ver}^{0,n}\big(\omega^n\otimes h+\omega^{n-1}\otimes\gamma^{1}+\cdot\cdot\cdot+\omega^{1}\otimes\gamma^{n-1}+b\otimes\gamma^{n}\big)=b\otimes\gamma^{n}_{-2}\otimes S(\gamma^{n}_{-1})\gamma^{n}_{0}.
$$
We have shown in Theorem \ref{Theorem:QPB} that $\mathrm{ver}=\mathrm{ver}^{0,1}$ fits into a short exact sequence, the Atiyah sequence (\ref{Atiyahseq}). It turns out that also $\mathrm{ver}^{0,n}$ for $n>0$ give rise to short exact sequences and, under a flatness assumption, the same is true for all higher vertical maps $\mathrm{ver}^{m,n}$.
\begin{proposition}
The sequence
\begin{equation}\label{exactseq}
0\longrightarrow\Omega^1(B)\wedge\Omega^{n-1}(B\#_{\sigma}H)\overset{i}\longrightarrow\Omega^{n}(B\#_{\sigma}H)\xrightarrow{\mathrm{ver}^{0,n}}(B\#_{\sigma}H)\otimes{}^{\mathrm{co}H}\Omega^{n}(H)\longrightarrow0
\end{equation}
is exact for all $n>0$.
\end{proposition}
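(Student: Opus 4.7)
The plan is to identify both the image of the inclusion $i$ and the kernel of $\mathrm{ver}^{0,n}$ with the same subspace $\bigoplus_{k=1}^{n}\Omega^{k}(B)\otimes\Omega^{n-k}(H)$ of the decomposition $\Omega^{n}(B\#_{\sigma}H)=\bigoplus_{k=0}^{n}\Omega^{k}(B)\otimes\Omega^{n-k}(H)$. The only summand on which $\mathrm{ver}^{0,n}$ acts nontrivially is the ``purely vertical'' $B\otimes\Omega^{n}(H)$, and on this summand it will turn out to be an isomorphism onto $(B\#_{\sigma}H)\otimes{}^{\mathrm{co}H}\Omega^{n}(H)$.

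First I would compute the left hand side explicitly. Via the inclusion $\Omega^{1}(B)\hookrightarrow\Omega^{1}(B\#_{\sigma}H)$, $\omega\mapsto\omega\otimes 1_{H}$, the wedge formula of Theorem \ref{thm:higerorderforms} reduces to $(\omega\otimes 1_{H})\wedge(\omega'\otimes\gamma)=(\omega\wedge\omega')\otimes\gamma$ for $\omega'\in\Omega^{k}(B)$ and $\gamma\in\Omega^{n-1-k}(H)$, with no sign contribution since $1_{H}$ has degree zero and the left action/coaction act trivially on it. Since every $\omega\in\Omega^{k}(B)$ with $k\geq 1$ is a sum of elements of the form $b^{0}\mathrm{d}_{B}b^{1}\wedge\omega''$ with $\omega''\in\Omega^{k-1}(B)$, this immediately yields
\[
\Omega^{1}(B)\wedge\Omega^{n-1}(B\#_{\sigma}H)=\bigoplus_{k=1}^{n}\Omega^{k}(B)\otimes\Omega^{n-k}(H),
\]
i.e., the image of $i$ consists precisely of the summands with $k\geq 1$.

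Next I would factor $\mathrm{ver}^{0,n}=p^{n}\circ\pi^{n}_{2}$, where $\pi^{n}_{2}\colon\Omega^{n}(B\#_{\sigma}H)\to B\otimes\Omega^{n}(H)$ is the canonical projection and $p^{n}\colon B\otimes\Omega^{n}(H)\to(B\#_{\sigma}H)\otimes{}^{\mathrm{co}H}\Omega^{n}(H)$ is defined by $b\otimes\gamma\mapsto b\otimes\gamma_{-2}\otimes S(\gamma_{-1})\gamma_{0}$. This generalizes the first-order analysis carried out in the proof of Theorem \ref{Theorem:QPB}. I would then show that $p^{n}$ is a bijection with inverse $g^{n}\colon(B\#_{\sigma}H)\otimes{}^{\mathrm{co}H}\Omega^{n}(H)\to B\otimes\Omega^{n}(H)$, $b\otimes h\otimes\gamma\mapsto b\otimes h\gamma$. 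The identity $g^{n}\circ p^{n}=\mathrm{Id}$ follows immediately from coassociativity and $\varepsilon(\gamma_{-1})\gamma_{0}=\gamma$. For $p^{n}\circ g^{n}=\mathrm{Id}$ the key input is the Hopf-module compatibility $\lambda^{n}(h\gamma)=h_{1}\gamma_{-1}\otimes h_{2}\gamma_{0}$ for $\gamma\in{}^{\mathrm{co}H}\Omega^{n}(H)$; this follows by extracting the $(0,n)$-bigraded piece of $\Omega^{n}(\Delta)(h\gamma)$ from \eqref{p1}, using that $\Omega^{\bullet}(\Delta)$ is a differential graded algebra map (Theorem \ref{thm:higerorderforms}).

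Combining these two steps, $\mathrm{ver}^{0,n}$ is surjective because $p^{n}$ is, and $\ker\mathrm{ver}^{0,n}=\ker\pi^{n}_{2}=\bigoplus_{k=1}^{n}\Omega^{k}(B)\otimes\Omega^{n-k}(H)=\mathrm{Im}(i)$. Together with the obvious injectivity of $i$, this yields exactness of \eqref{exactseq}. The main obstacle I anticipate is establishing the bijectivity of $p^{n}$, which is essentially the Hopf module structure theorem applied to the left $H$-Hopf module $\Omega^{n}(H)$ trivialized by its left coinvariants; verifying the relevant Hopf module compatibility rests on the bigraded coproduct formula \eqref{p1}.
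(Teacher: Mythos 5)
Your proposal is correct and takes essentially the same route as the paper's proof: factoring $\mathrm{ver}^{0,n}=p^{n}\circ\pi_{n}$ through the canonical projection onto $B\otimes\Omega^{n}(H)$ and exhibiting $g^{n}\colon b\otimes h\otimes\gamma\mapsto b\otimes h\gamma$ as the inverse of $p^{n}$, so that $\ker(\mathrm{ver}^{0,n})=\ker(\pi_{n})=\Omega^1(B)\wedge\Omega^{n-1}(B\#_{\sigma}H)$. The only difference is that you spell out two details the paper leaves to the reader --- the identification of $\Omega^1(B)\wedge\Omega^{n-1}(B\#_{\sigma}H)$ with the summands of positive $B$-degree via the wedge formula, and the verification of $p^{n}\circ g^{n}=\mathrm{Id}$ using the $(0,n)$-bigraded component of \eqref{p1} --- and both check out.
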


\begin{proof}
Since $i$ is injective, for the exactness of \eqref{exactseq} it remains to prove that the map $\mathrm{ver}^{0,n}$ is surjective with $\mathrm{ker(ver}^{0,n})=\Omega^1(B)\wedge\Omega^{n-1}(B\#_{\sigma}H)$, for every $n>0$.
Observe that $\mathrm{ver}^{0,n}=p^{n}\circ\pi_{n}$, where $\pi_{n}:\Omega^{n}(B\#_{\sigma}H)\to B\otimes\Omega^{n}(H)$ is the canonical projection, and $p^{n}:B\otimes\Omega^{n}(H)\to(B\#_{\sigma}H)\otimes{}^{\mathrm{co}H}\Omega^{n}(H)$, $b\otimes\gamma^{n}\mapsto b\otimes\gamma^{n}_{-2}\otimes S(\gamma^{n}_{-1})\gamma^{n}_{0}$ for all $\gamma^{n}\in\Omega^{n}(H)$. The inverse of $p^n$ is given by $g^{n}\colon(B\#_{\sigma}H)\otimes{}^{\mathrm{co}H}\Omega^{n}(H)\to B\otimes\Omega^{n}(H)$, $b\otimes h\otimes\gamma^{n}\mapsto b\otimes h\gamma^{n}$ as one easily verifies. Thus, $\mathrm{ver}^{0,n}$ is surjective and $\mathrm{ker(ver}^{0,n})=\mathrm{ker}(\pi_{n})=\bigoplus_{k=0}^{n-1}{\Omega^{n-k}(B)\otimes\Omega^{k}(H)}=\Omega^1(B)\wedge\Omega^{n-1}(B\#_{\sigma}H)$. 
\end{proof}

Then, under a flatness assumption, all the hypotheses of \cite[Lemma 5.60]{BM} are satisfied and so we obtain:

\begin{corollary}
    If $\Omega^m(B)$ is flat as a right $B$-module for all $m>0$ then
\begin{equation}\label{exactseq2}
0\longrightarrow\Omega^{m+1}(B)\wedge\Omega^{n-1}(B\#_{\sigma}H)\overset{i}\longrightarrow\Omega^{m}(B)\wedge\Omega^{n}(B\#_{\sigma}H)\xrightarrow{\mathrm{ver}^{m,n}}\Omega^{m}(B)\otimes_B(B\#_{\sigma}H)\otimes{}^{\mathrm{co}H}\Omega^{n}(H)\longrightarrow0
\end{equation}
is exact for all $n>0$ and $m\geq 0$.
\end{corollary}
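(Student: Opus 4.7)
The approach is to deduce (\ref{exactseq2}) by applying the functor $\Omega^m(B)\otimes_B(-)$ to the short exact sequence (\ref{exactseq}) from the previous proposition, which is exact in ${}_B\mm$ (being exact in the subcategory ${}_{B\#_\sigma H}\mm$). Since $\Omega^m(B)$ is flat as a right $B$-module, this functor is exact and produces a short exact sequence
\begin{equation*}
0\to\Omega^m(B)\otimes_B\bigl(\Omega^1(B)\wedge\Omega^{n-1}(B\#_\sigma H)\bigr)\to\Omega^m(B)\otimes_B\Omega^n(B\#_\sigma H)\xrightarrow{\mathrm{Id}\otimes\mathrm{ver}^{0,n}}\Omega^m(B)\otimes_B(B\#_\sigma H)\otimes{}^{\mathrm{co}H}\Omega^n(H)\to 0.
\end{equation*}
Matching it with (\ref{exactseq2}) then amounts to three identifications of terms and one identification of the map.

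For the middle term, flatness of $\Omega^m(B)$ together with surjectivity of the wedge $\omega\otimes_B\eta\mapsto\omega\wedge\eta$ imply that $\Omega^m(B)\otimes_B\Omega^n(B\#_\sigma H)\cong\Omega^m(B)\wedge\Omega^n(B\#_\sigma H)$; injectivity of the natural surjection is where flatness genuinely enters, via the $B$-balanced wedge together with the $B$-module inclusion $\Omega^n(B\#_\sigma H)\hookrightarrow\Omega^\bullet(B\#_\sigma H)$. For the leftmost term, associativity of the wedge combined with $\Omega^m(B)\wedge\Omega^1(B)=\Omega^{m+1}(B)$ (and flatness again) produces $\Omega^m(B)\otimes_B\bigl(\Omega^1(B)\wedge\Omega^{n-1}(B\#_\sigma H)\bigr)\cong\Omega^{m+1}(B)\wedge\Omega^{n-1}(B\#_\sigma H)$. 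The rightmost term already appears as in (\ref{exactseq2}).

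Finally, one verifies that $\mathrm{Id}_{\Omega^m(B)}\otimes_B\mathrm{ver}^{0,n}$ agrees with $\mathrm{ver}^{m,n}$ under this identification of the middle term. On a wedge $\omega\wedge\eta$ with $\omega\in\Omega^m(B)$ and $\eta\in\Omega^n(B\#_\sigma H)$, the multiplicativity of $\Omega^\bullet(\rho)$ from Theorem \ref{thm:higerorderforms}, together with $\Omega^\bullet(\rho)(\omega)=\omega\otimes 1_H$ (which holds because $\omega\in\Omega^m(B)\subseteq\Omega^m(B\#_\sigma H)$ is horizontal and $1_H$ sits in $\Omega^0(H)$), force the projection $\pi^{m,n}$ to place the full $B$-degree $m$ on the $\Omega^m(B\#_\sigma H)$-factor, while the coinvariant projection $\phi^n$ acts only on the $\Omega^n(H)$-component coming from $\eta$. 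This reproduces $\omega\otimes_B\mathrm{ver}^{0,n}(\eta)$. The main obstacle is precisely this bookkeeping of the graded coaction on wedge products, but it is routine given the formulas previously established and is exactly the content codified by \cite[Lemma 5.60]{BM}.
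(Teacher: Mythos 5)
Your overall skeleton --- tensor the Atiyah sequence \eqref{exactseq} with $\Omega^m(B)$ over $B$ and use flatness of $\Omega^m(B)$ --- is the right one; the paper itself proves nothing directly here, but just verifies the hypotheses of \cite[Lemma 5.60]{BM} and cites it, and your proposal is in effect an attempt to reprove that lemma. However, your execution contains a genuine error: the term identifications are false. Flatness of $\Omega^m(B)$ makes the functor $\Omega^m(B)\otimes_B(-)$ exact, but it does \emph{not} make the wedge map $q\colon\Omega^m(B)\otimes_B\Omega^n(B\#_\sigma H)\to\Omega^m(B)\wedge\Omega^n(B\#_\sigma H)$ injective, and your appeal to the inclusion $\Omega^n(B\#_\sigma H)\hookrightarrow\Omega^\bullet(B\#_\sigma H)$ does not help: tensoring an injection with a flat module preserves injectivity, but $q$ is not of the form $\mathrm{Id}\otimes(\text{injection})$ --- it is a multiplication map, whose kernel records the relations of the exterior algebra. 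Concretely, take $B=\Bbbk[x]$ with its classical calculus ($\Omega^1(B)=B\,\mathrm{d}x$ free, hence flat, and $\Omega^2(B)=0$), trivial $H$-action and trivial $2$-cocycle: then $\mathrm{d}x\otimes_B\mathrm{d}x$ is a nonzero element of $\Omega^1(B)\otimes_B\Omega^1(B)\cong B$ mapping to $\mathrm{d}x\wedge\mathrm{d}x=0$, so already for $m=n=1$ the middle term $\Omega^1(B)\otimes_B\Omega^1(B\#H)$ of your tensored sequence is strictly larger than $\Omega^1(B)\wedge\Omega^1(B\#H)$. The same failure affects your leftmost identification, so the tensored sequence is not isomorphic to \eqref{exactseq2}; it only surjects onto it.

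The gap is reparable without new ideas, and the repair is what the cited lemma actually does: keep the wedge maps as \emph{surjections} and push the tensored sequence forward instead of identifying terms. The part of your proposal that is correct and genuinely needed is the last paragraph: the verification, via multiplicativity of $\Omega^\bullet(\rho)$ and $\Omega^\bullet(\rho)(\omega)=\omega\otimes 1_H$ for $\omega\in\Omega^m(B)$, that $\mathrm{ver}^{m,n}\circ q=\mathrm{Id}_{\Omega^m(B)}\otimes_B\mathrm{ver}^{0,n}$. Given this intertwining, surjectivity of $\mathrm{ver}^{m,n}$ follows from that of $\mathrm{Id}\otimes\mathrm{ver}^{0,n}$ and of $q$, and since $q$ is onto one gets $\ker(\mathrm{ver}^{m,n})=q\big(\ker(\mathrm{Id}\otimes\mathrm{ver}^{0,n})\big)=q\big(\mathrm{Im}\,(\Omega^m(B)\otimes_B(\Omega^1(B)\wedge\Omega^{n-1}(B\#_\sigma H)))\big)=\Omega^{m+1}(B)\wedge\Omega^{n-1}(B\#_\sigma H)$, using exactness of the tensored sequence at the middle spot; injectivity of $i$ in \eqref{exactseq2} is automatic, as it is an inclusion of subspaces of $\Omega^{m+n}(B\#_\sigma H)$. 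Alternatively, in the crossed-product setting everything can be read off explicitly: since $(\omega\otimes 1_H)\wedge(\omega'\otimes\gamma')=(\omega\wedge\omega')\otimes\gamma'$ and $\Omega^m(B)\wedge\Omega^k(B)=\Omega^{m+k}(B)$ by the surjectivity condition, one has $\Omega^m(B)\wedge\Omega^n(B\#_\sigma H)=\bigoplus_{i=0}^{n}\Omega^{m+n-i}(B)\otimes\Omega^i(H)$, on which $\mathrm{ver}^{m,n}$ is the projection onto the $i=n$ summand followed by the isomorphism $p$ from the proof of the $m=0$ case, so the kernel is visibly $\bigoplus_{i=0}^{n-1}\Omega^{m+n-i}(B)\otimes\Omega^i(H)=\Omega^{m+1}(B)\wedge\Omega^{n-1}(B\#_\sigma H)$.
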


Hence, by specializing \cite[Corollary 5.62]{BM} to our setting, we obtain that the spectral sequence $(E_r^{\bullet,\bullet},D_r)_{r\geq 1}$ is the Leray--Serre spectral sequence and, in case $\mathrm{H}_{\mathrm{dR}}(H)$ is left-invariant, we obtain an explicit description of the second page in terms of the cohomology of the base algebra and structure Hopf algebra. More explicitly, we obtain:

\begin{proposition}
Let $(\Omega^{\bullet}(B),\mathrm{d}_{B})$ be a $\sigma$-twisted $H$-module DC on $B$ and $(\Omega^{\bullet}(H),\mathrm{d}_{H})$ a DC on $H$ with respect to which $\Delta:H\to H\otimes H$ is differentiable. If $\Omega^{m}(B)$ is flat as a right $B$-module for all $m>0$ and $\mathrm{H}_{\mathrm{dR}}(H)$ is left-invariant, then we get a spectral
sequence with second page $E^{p,q}_{2}=\mathrm{H}^{p}_{\mathrm{dR}}(B)\otimes\mathrm{H}^{q}_{\mathrm{dR}}(H)$ which converges to $\mathrm{H}_{\mathrm{dR}}(B\#_{\sigma}H)$.
\end{proposition}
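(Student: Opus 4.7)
The strategy is to recognize the spectral sequence $(E_r^{\bullet,\bullet},D_r)_{r\geq 1}$ of the excerpt as a specialization of the Leray--Serre spectral sequence of \cite[Corollary 5.62]{BM} associated with our regular QPB on $B\#_{\sigma}H$, and then to read off the second page directly. Concretely, I would organize the argument in four steps.

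First, I would unpack the filtration: for a homogeneous element $\omega^i\otimes\gamma^j\in\Omega^i(B)\otimes\Omega^j(H)\subseteq\Omega^{i+j}(B\#_\sigma H)$, the coaction differential of Theorem \ref{thm:higerorderforms} gives $\rho^{n-k,k}(\omega^i\otimes\gamma^j)=\omega^i\otimes\gamma^j_{<1,n-k-i>}\otimes\gamma^j_{<2,k>}$, which automatically vanishes as soon as $k>j$. A short bookkeeping check then shows
\[
F^m(\Omega^n(B\#_\sigma H))=\bigoplus_{\substack{i\geq m\\i+j=n}}\Omega^i(B)\otimes\Omega^j(H)=\Omega^m(B)\wedge\Omega^{n-m}(B\#_\sigma H),
\]
so that the filtration is bounded ($F^{n+1}(\Omega^n)=0$), ensuring convergence via \cite[Lemma 5.58]{BM}.

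Second, I would identify the $E_1$-page. The flatness hypothesis on $\Omega^m(B)$ is exactly what is needed to make the higher Atiyah sequences \eqref{exactseq2} exact, and these identify the graded quotients as $F^p(\Omega^{p+q})/F^{p+1}(\Omega^{p+q})\cong\Omega^p(B)\otimes_B(B\#_\sigma H)\otimes{}^{\mathrm{co}H}\Omega^q(H)$ via $\mathrm{ver}^{p,q}$. Iterating the higher-order analogue of the isomorphism $g$ from the proof of Theorem \ref{Theorem:QPB} yields $(B\#_\sigma H)\otimes{}^{\mathrm{co}H}\Omega^q(H)\cong B\otimes\Omega^q(H)$, collapsing the quotient to $\Omega^p(B)\otimes\Omega^q(H)$. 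Plugging $\mathrm{d}_{\#_\sigma}(\omega^p\otimes\gamma^q)=\mathrm{d}_B\omega^p\otimes\gamma^q+(-1)^p\omega^p\otimes\mathrm{d}_H\gamma^q$ into the definition of $E_1^{p,q}$ in the excerpt, the first term lies in $F^{p+1}$ and is killed, so the induced differential on the quotient is $(-1)^p\mathrm{Id}_{\Omega^p(B)}\otimes\mathrm{d}_H$. Since tensoring over the ground field $\Bbbk$ is exact, this gives
\[
E_1^{p,q}\cong\Omega^p(B)\otimes\mathrm{H}^q_{\mathrm{dR}}(H).
\]

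Third, I would compute $D_1$ via the usual zigzag: pick a representative $\omega^p\otimes\gamma^q$ with $\mathrm{d}_H\gamma^q=0$, apply $\mathrm{d}_{\#_\sigma}$, and take its class in $F^{p+1}(\Omega^{p+q+1})/F^{p+2}(\Omega^{p+q+1})\cong\Omega^{p+1}(B)\otimes\Omega^q(H)$. The calculation yields $D_1([\omega^p\otimes\gamma^q])=[\mathrm{d}_B\omega^p\otimes\gamma^q]$. The left-invariance assumption on $\mathrm{H}_{\mathrm{dR}}(H)$ enters here: it guarantees that every class in $\mathrm{H}^q_{\mathrm{dR}}(H)$ admits a left $H$-coinvariant representative, which makes the zigzag insensitive to the $\sigma$-twisted left $H$-action on $\Omega^\bullet(B)$ and hence confirms that $D_1$ is simply $\mathrm{d}_B\otimes\mathrm{Id}$. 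Taking cohomology and using exactness of tensoring with a vector space produces
\[
E_2^{p,q}\cong\mathrm{H}^p_{\mathrm{dR}}(B)\otimes\mathrm{H}^q_{\mathrm{dR}}(H),
\]
as required.

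The main obstacle I anticipate is step three: showing that $D_1=\mathrm{d}_B\otimes\mathrm{Id}$ is clean only if one can avoid contamination by the twisted action $\cdot\colon H\otimes\Omega^\bullet(B)\to\Omega^\bullet(B)$, and this is precisely where left-invariance of $\mathrm{H}_{\mathrm{dR}}(H)$ is indispensable. Steps one and two are essentially bookkeeping once one accepts \eqref{exactseq2}, and convergence is an off-the-shelf consequence of the bounded filtration.
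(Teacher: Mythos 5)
Your proposal is correct and follows the same skeleton as the paper's argument: filtration by $B$-degree, convergence via \cite[Lemma 5.58]{BM}, identification of the graded quotients through the higher vertical maps $\mathrm{ver}^{p,q}$ (with flatness entering exactly as in the paper, through \cite[Lemma 5.60]{BM} and the sequences \eqref{exactseq2}), and then the Leray--Serre second page. The one genuine difference is the final step: the paper simply specializes \cite[Corollary 5.62]{BM} as a black box, whereas you unpack it, computing $E_1^{p,q}\cong\Omega^p(B)\otimes\mathrm{H}^q_{\mathrm{dR}}(H)$ and $D_1$ by hand from the direct-sum structure of the crossed product calculus. This buys self-containedness and actually reveals more than the citation does: since $\mathrm{d}_{\#_\sigma}=\mathrm{d}_B\otimes\mathrm{Id}+(-1)^{|\cdot|}\mathrm{Id}\otimes\mathrm{d}_H$ has no cross terms, the filtered complex underlying $\Omega^\bullet(B\#_\sigma H)$ is \emph{literally} the tensor product complex of $(\Omega^\bullet(B),\mathrm{d}_B)$ and $(\Omega^\bullet(H),\mathrm{d}_H)$ --- the twist $\sigma$ and the measure deform only the wedge product, not the differential --- so over the field $\Bbbk$ the page computation is a plain K\"unneth argument.

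This also means your anticipated ``main obstacle'' in step three is a misdiagnosis, though a harmless one. The spectral sequence of a filtered complex is computed from the differential alone, and the $\sigma$-twisted action on $\Omega^\bullet(B)$ never enters $\mathrm{d}_{\#_\sigma}$; choosing representatives $\sum_a\omega^p_a\otimes\gamma^q_a$ with $\mathrm{d}_H\gamma^q_a=0$ (possible by exactness of $\Omega^p(B)\otimes(-)$) gives $D_1=\mathrm{d}_B\otimes\mathrm{Id}$ with no contamination possible and no need for coinvariant representatives. The left-invariance of $\mathrm{H}_{\mathrm{dR}}(H)$ is not what rescues the zigzag: it is a hypothesis inherited from the general framework of \cite[Corollary 5.62]{BM}, where the fibre cohomology is computed from the coinvariant forms ${}^{\mathrm{co}H}\Omega^\bullet(H)$ and left-invariance is needed to identify it with $\mathrm{H}_{\mathrm{dR}}(H)$. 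In your direct computation the hypothesis is available and invoking it does no logical harm, but it is superfluous at that step; if you keep the hands-on route, you should either drop that justification or remark that the hypothesis is only needed to match the general statement being specialized.
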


\subsection{Connection 1-forms and fundamental vector fields}\label{Sec3.5}

In this subsection we fix a Hopf algebra $H$ with invertible antipode and a bicovariant FODC $(\Omega^1(H),\mathrm{d}_{H})$ on $H$, as well as a $\sigma$-twisted left $H$-module algebra $B$ and a $\sigma$-twisted $H$-module FODC $(\Omega^1(B),\mathrm{d}_{B})$ on $B$. Moreover, we assume that $^{\mathrm{co}H}\Omega^1(H)$ is finite-dimensional and we consider the \textit{quantum tangent space} $T_\varepsilon H:=\mathrm{Hom}_{\Bbbk}(^{\mathrm{co}H}\Omega^1(H),\Bbbk)$, as introduced in \cite{Wo}. We structure $T_\varepsilon H$ as a right $H$-comodule via $T_\varepsilon H\to(T_\varepsilon H)\otimes H,\ \alpha\mapsto\alpha_{0}\otimes\alpha_{1}$, where $\alpha_{0}(\gamma)\alpha_{1}=\alpha(\gamma_{0})S^{-1}(\gamma_{1})$ for any $\gamma\in{}^{\mathrm{co}H}\Omega^1(H)$, see e.g. \cite[Definition 4.4]{PS}. 
To any $\alpha\in T_\varepsilon H$ corresponds a \textit{noncommutative vector field} (in the sense of \cite{Bo1,Bo2}), namely the left $B\#_{\sigma}H$-linear map
$$
\overline{\alpha}:=(\mathrm{Id}\otimes\alpha)\circ\mathrm{ver}\colon\Omega^1(B\#_\sigma H)\to B\#_\sigma H,
$$
see \cite[Corollary 6.2 and page 490]{BM}. Moreover, $\overline{\alpha}$ is \textit{vertical}, meaning that $\overline{\alpha}(\Omega^1(B)\otimes H)=0$ and $\overline{\alpha}(1_B\otimes\gamma)=\alpha(\gamma)1_B\otimes 1_H$ for all $\gamma\in{}^{\mathrm{co}H}\Omega^1(H)$. It is the unique vector field with these two properties (since $\overline{\alpha}$ is left $B\#_{\sigma}H$-linear and every $b\otimes\gamma\in B\otimes\Omega^1(H)$ can be written as $(b\otimes\gamma_{-2})\cdot(1_{B}\otimes S(\gamma_{-1})\gamma_{0})$)
and thus,
following \cite[Definition 4.6]{PS}, we call $\overline{\alpha}$ the \textit{fundamental vector field} corresponding to $\alpha\in T_\varepsilon H$. \medskip

Suppose that $\mathrm{dim}(^{\mathrm{co}H}\Omega^1(H))=n$ and let $\{x_{j}\}_{j=1}^{n}$ be a basis of $T_\varepsilon H$ and $\{x^j\}_{j=1}^{n}$ be the dual basis of $^{\mathrm{co}H}\Omega^1(H)$, which can be identified with $(T_\varepsilon H)^{*}$. Recall that, in the setup of this subsection, a \textit{connection 1-form} is an element $\phi=\sum_{j=1}^{n}{x_{j}\otimes\phi_{j}}\in(T_\varepsilon H\otimes\Omega^1(B\#_\sigma H))^{\mathrm{co}H}$, satisfying $\sum_{j=1}^{n}{x_{j}\otimes\overline{\alpha}(\phi_{j})}=\alpha\otimes1_{B}\otimes1_{H}$, for all $\alpha\in T_\varepsilon H$.
This definition is in complete analogy with \cite[Definition 4.10]{PS}, however in case of the crossed product, rather than the smash product calculus. The following lemma and proposition are the corresponding analogues of \cite[Lemma 4.12 and Theorem 4.13]{PS}. We report here their proofs for the sake of  completeness.

\begin{lemma}
    An element $\phi\in(T_\varepsilon H\otimes\Omega^1(B\#_\sigma H))^{\mathrm{co}H}$ is a connection 1-form if and only if $(\mathrm{Id}\otimes\pi_{2})(\phi)=\sum_{j=1}^{n}{x_{j}\otimes1_{B}\otimes x^{j}}$, where $\pi_{2}:\Omega^1(B\#_{\sigma}H)\to B\otimes\Omega^1(H)$ is the canonical projection.
\end{lemma}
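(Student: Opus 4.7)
The plan is to exploit the factorization $\mathrm{ver}=p\circ\pi_2$ established in the proof of Theorem~\ref{Theorem:QPB}, where $p\colon B\otimes\Omega^1(H)\to(B\#_\sigma H)\otimes{}^{\mathrm{co}H}\Omega^1(H)$, $b\otimes\gamma\mapsto b\otimes\gamma_{-2}\otimes S(\gamma_{-1})\gamma_0$, is a bijection with inverse $g(b\otimes h\otimes\gamma)=b\otimes h\gamma$. Writing $\phi=\sum_{j=1}^{n}x_j\otimes\phi_j$, the first step is simply to rewrite the defining condition $\sum_j x_j\otimes\overline{\alpha}(\phi_j)=\alpha\otimes1_B\otimes1_H$ for a connection $1$-form using $\overline{\alpha}=(\mathrm{Id}\otimes\alpha)\circ\mathrm{ver}$.

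Next, I would expand $\mathrm{ver}(\phi_j)=\sum_k v_{jk}\otimes x^k$ in the basis $\{x^k\}_{k=1}^{n}$ of ${}^{\mathrm{co}H}\Omega^1(H)$, with uniquely determined coefficients $v_{jk}\in B\#_\sigma H$. Substituting this expansion and evaluating the connection condition on $\alpha=x_\ell$, using the duality relation $x_\ell(x^k)=\delta_{\ell k}$, the condition collapses to $\sum_{j}x_j\otimes v_{j\ell}=x_\ell\otimes1_B\otimes1_H$ for every $\ell$, which (by linear independence of the $x_j$) is equivalent to $v_{j\ell}=\delta_{j\ell}(1_B\otimes1_H)$. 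In other words, $\phi$ is a connection $1$-form if and only if $\mathrm{ver}(\phi_j)=1_B\otimes1_H\otimes x^j$ for each $j$.

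Finally, since each $x^j$ is left $H$-coinvariant, i.e.\ $\lambda(x^j)=1_H\otimes x^j$, one has $p(1_B\otimes x^j)=1_B\otimes1_H\otimes x^j$ and, dually, $g(1_B\otimes1_H\otimes x^j)=1_B\otimes x^j$. Applying $g$ to both sides of the equivalent condition $\mathrm{ver}(\phi_j)=1_B\otimes1_H\otimes x^j$ therefore translates it into $\pi_2(\phi_j)=1_B\otimes x^j$, and summing against $x_j$ gives precisely $(\mathrm{Id}\otimes\pi_2)(\phi)=\sum_j x_j\otimes1_B\otimes x^j$, with the reverse implication obtained by applying $p$. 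The argument is essentially bookkeeping with the dual basis together with the coinvariance of the $x^j$; the only point requiring any care is to remember that $p$ involves the left coaction of $\Omega^1(H)$, so that the simplification $p(1_B\otimes x^j)=1_B\otimes1_H\otimes x^j$ is what matches the two formulations of the condition. No serious obstacle is anticipated.
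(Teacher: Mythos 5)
Your proof is correct and takes essentially the same approach as the paper's: both rest on the bijection $p$ (with inverse $g$) established in the proof of Theorem \ref{Theorem:QPB} together with dual-basis bookkeeping, the paper expanding the vertical component $\phi''_j\in B\otimes\Omega^1(H)$ of $\phi_j$ via $g$ where you expand $\mathrm{ver}(\phi_j)$ in the coinvariant basis $\{x^k\}$. Your intermediate reformulation $\mathrm{ver}(\phi_j)=1_B\otimes 1_H\otimes x^j$, transported through $g\circ\mathrm{ver}=\pi_2$, is a modest streamlining of the paper's converse computation of the coefficients $b_{i,j}\otimes h_{i,j}$, not a genuinely different argument.
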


\begin{proof}
    First note that, given an element $\phi=\sum_{j=1}^{n}{x_{j}\otimes\phi_{j}}\in(T_\varepsilon H\otimes\Omega^1(B\#_\sigma H))^{\mathrm{co}H}$, we can write $\phi_{j}=\phi'_{j}+\phi''_{j}$ where $\phi'_{j}\in\Omega^1(B)\otimes H$ and $\phi''_{j}\in B\otimes\Omega^1(H)$, for every $j=1,...,n$. But now $\sum_{j=1}^{n}{x_{j}\otimes\overline{\alpha}(\phi_{j})}=\sum_{j=1}^{n}{x_{j}\otimes\overline{\alpha}(\phi''_{j})}$ for all $\alpha\in T_\varepsilon H$ and furthermore $(\mathrm{Id}\otimes\pi_{2})(\phi)=\sum_{j=1}^{n}{x_{j}\otimes\phi''_{j}}$. Thus, the claim of the lemma follows if we prove that $\sum_{j=1}^{n}{x_{j}\otimes\overline{\alpha}(\phi''_{j})}=\alpha\otimes1_{B}\otimes1_{H}$ holds for all $\alpha\in T_\varepsilon H$ if and only if $\sum_{j=1}^{n}{x_{j}\otimes\phi''_{j}}=\sum_{j=1}^{n}{x_{j}\otimes1_{B}\otimes x^{j}}$. Clearly, if we have that $\sum_{j=1}^{n}{x_{j}\otimes\phi''_{j}}=\sum_{j=1}^{n}{x_{j}\otimes1_{B}\otimes x^{j}}$, then 
\[
\begin{split}
    \sum_{j=1}^{n}{x_{j}\otimes\overline{\alpha}(\phi''_{j})}=\sum_{j=1}^{n}{x_{j}\otimes(\mathrm{Id}\otimes\alpha)\mathrm{ver}(1_{B}\otimes x^{j})}=\sum_{j=1}^{n}{\alpha(x^{j})x_{j}\otimes1_{B}\otimes1_{H}}=\alpha\otimes1_{B}\otimes1_{H}
\end{split}
\]
for all $\alpha\in T_\varepsilon H$.
On the other hand, suppose that $\sum_{j=1}^{n}{x_{j}\otimes\overline{\alpha}(\phi''_{j})}=\alpha\otimes1_{B}\otimes1_{H}$ for all $\alpha\in T_\varepsilon H$. As a consequence, given $k,i\in\{1,...,n\}$, we have $\sum_{j=1}^{n}{x_{j}(x^{i})\overline{x_{k}}(\phi''_{j})}=x_{k}(x^{i})1_{B}\otimes1_{H}$. We know that $\phi''_{j}\in B\otimes\Omega^1(H)$ and that $g:(B\#_{\sigma}H)\otimes{}^{\mathrm{co}H}\Omega^{1}(H)\to B\otimes\Omega^1(H)$, $b\otimes h\otimes\gamma\mapsto b\otimes h\gamma$ and $p:B\otimes\Omega^1(H)\to(B\#_{\sigma}H)\otimes{}^{\mathrm{co}H}\Omega^1(H)$, $b\otimes\gamma\mapsto b\otimes\gamma_{-2}\otimes S(\gamma_{-1})\gamma_{0}$ are inverse isomorphisms in $_{B\#_{\sigma}H}\mm^{H}$. Thus, for all $j\in\{1,...,n\}$, we can write $\phi''_{j}=\sum_{i=1}^{n}{g(b_{i,j}\otimes h_{i,j}\otimes x^{i})}=\sum_{i=1}^{n}{b_{i,j}\otimes h_{i,j}x^{i}}$. Hence, we can compute 
\[
\begin{split}
    b_{i,j}\otimes h_{i,j}&=\sum_{l=1}^{n}{b_{i,l}\otimes h_{i,l}x_{j}(x^{l})}=\sum_{l=1}^{n}{(\mathrm{Id}\otimes x_{j})pg(b_{i,l}\otimes h_{i,l}\otimes x^{l})}=\sum_{l=1}^{n}{(\mathrm{Id}\otimes x_{j})p(b_{i,l}\otimes h_{i,l}x^{l})}\\&=\sum_{l=1}^{n}{(\mathrm{Id}\otimes x_{j})\mathrm{ver}(b_{i,l}\otimes h_{i,l}x^{l})}=\sum_{l=1}^{n}{\overline{x_{j}}(b_{i,l}\otimes h_{i,l}x^{l})}=\sum_{k=1}^{n}{\sum_{l=1}^{n}{x_{k}(x^{i})\overline{x_{j}}(b_{k,l}\otimes h_{k,l}x^{l})}}\\&=\sum_{k=1}^{n}{x_{k}(x^{i})\overline{x_{j}}(\sum_{l=1}^{n}{b_{k,l}\otimes h_{k,l}x^{l}}})=\sum_{k=1}^{n}{x_{k}(x^{i})\overline{x_{j}}(\phi''_{k})}=x_{j}(x^{i})1_{B}\otimes 1_{H},
\end{split}
\]
for all $i,j\in\{1,...,n\}$. Thus, if $i\not=j$ we obtain that $b_{i,j}\otimes h_{i,j}=0$ while if $i=j$ we obtain that $b_{j,j}\otimes h_{j,j}=1_{B}\otimes1_{H}$ and then $\phi''_{j}=1_{B}\otimes x^{j}$ for every $j=1,...,n$. Hence $\sum_{j=1}^{n}{x_{j}\otimes\phi''_{j}}=\sum_{j=1}^{n}{x_{j}\otimes1_{B}\otimes x^{j}}$.
\end{proof}

\begin{remark}
Given a connection 1-form $\phi=\sum_{j=1}^{n}{x_{j}\otimes\phi_{j}}$, we then have 
\[
\phi=\sum_{j=1}^{n}{x_{j}\otimes\phi'_{j}}+\sum_{j=1}^{n}{x_{j}\otimes1_{B}\otimes x^{j}},
\]
where $\phi'_{j}\in\Omega^1(B)\otimes H$ for $j=1,...,n$. Thus, for all $\gamma\in{}^{\mathrm{co}H}\Omega^1(H)$, we obtain 
\[
\sum_{j=1}^{n}{x_{j}(\gamma)\phi_{j}}=\sum_{j=1}^{n}{x_{j}(\gamma)\phi'_{j}}+\sum_{j=1}^{n}{x_{j}(\gamma)1_{B}\otimes x^{j}}=\sum_{j=1}^{n}{x_{j}(\gamma)\phi'_{j}}+1_{B}\otimes\gamma
\]
and then 
\begin{equation}\label{eq:connection}
\sum_{j=1}^{n}{x_{j}(\gamma)\mathrm{ver}(\phi_{j})}=\sum_{j=1}^{n}{x_{j}(\gamma)\mathrm{ver}(\phi'_{j})}+\mathrm{ver}(1_{B}\otimes\gamma)=1_{B}\otimes1_{H}\otimes\gamma.
\end{equation}
Observe also that the coevaluation map $\Bbbk\to(T_\varepsilon H)\otimes(T_\varepsilon H)^{*}$, $1_{\Bbbk}\mapsto\sum_{j=1}^{n}{x_{j}\otimes x^{j}}$ is right $H$-colinear. Thus, since $\sum_{j=1}^{n}{x_{j0}\otimes1_{B}\otimes x^{j}_{0}\otimes x_{j1}x^{j}_{1}}=\sum_{j=1}^{n}{x_{j}\otimes1_{B}\otimes x^{j}\otimes1_{H}}$, the fact that $\phi$ is coinvariant is expressed by $\sum_{j=1}^{n}{x_{j0}\otimes\phi'_{j0}\otimes x_{j1}\phi'_{j1}}=\sum_{j=1}^{n}{x_{j}\otimes\phi'_{j}\otimes1_{H}}$. 
\end{remark}

\begin{proposition}\label{prop:bijection}
For the crossed product calculus there is a bijection between connections and connection 1-forms.
Explicitly, to a connection $c:(B\#_{\sigma}H)\otimes{}^{\mathrm{co}H}\Omega^1(H)\to\Omega^1(B\#_{\sigma}H)$ we assign the connection $1$-form $\phi_{c}:=\sum_{j=1}^{n}{x_{j}\otimes c(1_{B}\otimes 1_{H}\otimes x^{j})}\in(T_\varepsilon H\otimes\Omega^1(B\#_{\sigma}H))^{\mathrm{co}H}$, while a connection $1$-form $\phi=\sum_{j=1}^{n}{x_{j}\otimes\phi_{j}}\in(T_\varepsilon H\otimes\Omega^1(B\#_\sigma H))^{\mathrm{co}H}$ corresponds to the connection
\begin{align*}
    c_{\phi}:(B\#_{\sigma}H)\otimes{}^{\mathrm{co}H}\Omega^1(H)&\to\Omega^1(B\#_\sigma H)\\
    b\otimes h\otimes\gamma&\mapsto\sum_{j=1}^{n}{x_{j}(\gamma)((b\otimes h)\cdot\phi_{j})}.
\end{align*}
\end{proposition}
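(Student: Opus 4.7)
The plan is to check in turn that both assignments land in the claimed sets and that they are mutually inverse. First, for the well-definedness of $\phi_c$: its coinvariance follows from the right $H$-colinearity of $c$ combined with the (standard) coinvariance of the dual-basis coevaluation $1_{\Bbbk}\mapsto\sum_j x_j\otimes x^j$ in $T_\varepsilon H\otimes{}^{\mathrm{co}H}\Omega^1(H)$, which gives $\sum_j x_{j,0}\otimes x^j_0\otimes x_{j,1}x^j_1=\sum_j x_j\otimes x^j\otimes 1_H$. Using the preceding lemma it then suffices to verify $(\mathrm{Id}\otimes\pi_2)(\phi_c)=\sum_j x_j\otimes 1_B\otimes x^j$, and this follows at once from $\mathrm{ver}=p\circ\pi_2$ (with $p$ the bijection from the proof of Theorem \ref{Theorem:QPB}) together with the splitting identity $\mathrm{ver}\circ c=\mathrm{Id}$.

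For the well-definedness of $c_\phi$: left $B\#_\sigma H$-linearity is immediate from the definition, and the splitting $\mathrm{ver}\circ c_\phi=\mathrm{Id}$ is a direct consequence of the left $B\#_\sigma H$-linearity of $\mathrm{ver}$ combined with \eqref{eq:connection}. Right $H$-colinearity of $c_\phi$ is the main technical step: one expresses the right $H$-coaction on $T_\varepsilon H$ via the defining relation $\alpha_0(\gamma)\alpha_1=\alpha(\gamma_0)S^{-1}(\gamma_1)$ and combines it with the coinvariance identity $\sum_j x_{j,0}\otimes\phi_{j,0}\otimes x_{j,1}\phi_{j,1}=\sum_j x_j\otimes\phi_j\otimes 1_H$, together with compatibility of the left $B\#_\sigma H$-action on $\Omega^1(B\#_\sigma H)$ with the right $H$-coaction, to match the two sides of the colinearity equation.

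For mutual inverseness, $c_{\phi_c}=c$ reduces to left $B\#_\sigma H$-linearity of $c$:
\[
c_{\phi_c}(b\otimes h\otimes\gamma)=\sum_j x_j(\gamma)(b\otimes h)\cdot c(1_B\otimes 1_H\otimes x^j)=c\bigl(b\otimes h\otimes\sum_j x_j(\gamma)x^j\bigr)=c(b\otimes h\otimes\gamma),
\]
where one uses $\sum_j x_j(\gamma)x^j=\gamma$ since $\{x^j\}$ and $\{x_j\}$ are dual bases. Conversely,
\[
\phi_{c_\phi}=\sum_k x_k\otimes c_\phi(1_B\otimes 1_H\otimes x^k)=\sum_{j,k}x_j(x^k)\,x_k\otimes\phi_j=\sum_j x_j\otimes\phi_j=\phi
\]
by the dual-basis relation $x_j(x^k)=\delta_{jk}$. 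I expect the main obstacle to be the right $H$-colinearity verification of $c_\phi$, where the somewhat unwieldy coaction on $T_\varepsilon H$ has to be carefully tracked against the coinvariance of $\phi$ and the structure of the right $H$-coaction on $\Omega^1(B\#_\sigma H)$ described in Theorem \ref{canFODC}.
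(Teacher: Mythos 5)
Your proposal is correct and follows essentially the same route as the paper: coinvariance of $\phi_c$ via right $H$-colinearity of $c$ and of the coevaluation, right $H$-colinearity of $c_\phi$ via the coaction relation on $T_\varepsilon H$ and the coinvariance of $\phi$, the splitting via \eqref{eq:connection} and left $B\#_\sigma H$-linearity of $\mathrm{ver}$, and mutual inverseness by dual bases. The only (harmless, even slightly slicker) deviation is that you verify the connection-1-form property of $\phi_c$ through the preceding lemma using $\mathrm{ver}=p\circ\pi_2$ and $\mathrm{ver}\circ c=\mathrm{Id}$, where the paper instead checks the defining condition $\sum_{j}x_j\otimes\overline{\alpha}(c(1_B\otimes 1_H\otimes x^j))=\alpha\otimes 1_B\otimes 1_H$ directly.
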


\begin{proof}
    First suppose to have a connection $c:(B\#_{\sigma}H)\otimes{}^{\mathrm{co}H}\Omega^1(H)\to\Omega^1(B\#_{\sigma}H)$, i.e., a morphism in $_{B\#_{\sigma}H}\mm^{H}$ such that $\mathrm{ver}\circ c=\mathrm{Id}$ and define $\phi_{c}:=\sum_{j=1}^{n}{x_{j}\otimes c(1_{B}\otimes 1_{H}\otimes x^{j})}\in T_\varepsilon H\otimes\Omega^1(B\#_{\sigma}H)$. Observe that $\phi_{c}\in(T_\varepsilon H\otimes\Omega^1(B\#_{\sigma}H))^{\mathrm{co}H}$, indeed
\[ 
\begin{split}
    \sum_{j=1}^{n}{x_{j0}\otimes c(1_{B}\otimes1_{H}\otimes x^{j})_{0}\otimes x_{j1}c(1_{B}\otimes1_{H}\otimes x^{j})_{1}}
    &=\sum_{j=1}^{n}{x_{j0}\otimes c(1_{B}\otimes1_{H}\otimes x^{j}_{0})\otimes x_{j1}x^{j}_{1}}\\&=\sum_{j=1}^{n}{x_{j}\otimes c(1_{B}\otimes1_{H}\otimes x^{j})\otimes1_{H}},
\end{split}
\]    
since $c$ is right $H$-colinear as well as the map $\Bbbk\in1_{\Bbbk}\mapsto\sum_{j=1}^{n}{x_{j}\otimes x^{j}}\in T_\varepsilon H\otimes(T_\varepsilon H)^{*}$. Furthermore, we have
\[ 
\begin{split}
    \sum_{j=1}^{n}{x_{j}\otimes\overline{\alpha}(c(1_{B}\otimes1_{H}\otimes x^{j}))}&=\sum_{j=1}^{n}{x_{j}\otimes(\mathrm{Id}\otimes\alpha)\mathrm{ver}(c(1_{B}\otimes1_{H}\otimes x^{j}))}=\sum_{j=1}^{n}{\alpha(x^{j})x_{j}\otimes1_{B}\otimes1_{H}}=\alpha\otimes1_{B}\otimes1_{H},
\end{split}    
\]  
 thus $\phi_{c}$ is a connection 1-form. 
 
 On the other hand, if we have a connection 1-form $\phi=\sum_{j=1}^{n}{x_{j}\otimes\phi_{j}}\in(T_\varepsilon H\otimes\Omega^1(B\#_\sigma H))^{\mathrm{co}H}$, we can define $c_{\phi}:(B\#_{\sigma}H)\otimes{}^{\mathrm{co}H}\Omega^1(H)\to\Omega^1(B\#_\sigma H)$ as the morphism in $_{B\#_{\sigma}H}\mm$ given by $c_{\phi}(1_{B}\otimes1_{H}\otimes\gamma):=\sum_{j=1}^{n}{x_{j}(\gamma)\phi_{j}}$, so that $c_{\phi}(b\otimes h\otimes\gamma)=(b\otimes h)\cdot c_{\phi}(1_{B}\otimes1_{H}\otimes\gamma)=\sum_{j=1}^{n}{x_{j}(\gamma)((b\otimes h)\cdot\phi_{j})}$. We have that $c_{\phi}$ is also right $H$-colinear, indeed
\[
\begin{split}
    c_{\phi}(b\otimes h\otimes\gamma)_{0}\otimes c_{\phi}(b\otimes h\otimes\gamma)_{1}&=\sum_{j=1}^{n}{x_{j}(\gamma)((b\otimes h)\cdot\phi_{j})_{0}\otimes((b\otimes h)\cdot\phi_{j})_{1}}=\sum_{j=1}^{n}{x_{j}(\gamma)((b\otimes h_{1})\cdot\phi_{j0})\otimes h_{2}\phi_{j1}}\\&=\sum_{j=1}^{n}{x_{j}(\gamma_{0})((b\otimes h_{1})\cdot\phi_{j0})\otimes h_{2}\varepsilon(\gamma_{1})\phi_{j1}}=\sum_{j=1}^{n}{((b\otimes h_{1})\cdot\phi_{j0})\otimes h_{2}\gamma_{2}x_{j}(\gamma_{0})S^{-1}(\gamma_{1})\phi_{j1}}\\&=\sum_{j=1}^{n}{x_{j0}(\gamma_{0})((b\otimes h_{1})\cdot\phi_{j0})\otimes h_{2}\gamma_{1}x_{j1}\phi_{j1}}=\sum_{j=1}^{n}{x_{j}(\gamma_{0})((b\otimes h_{1})\cdot\phi_{j})\otimes h_{2}\gamma_{1}}\\&=c(b\otimes h_{1}\otimes\gamma_{0})\otimes h_{2}\gamma_{1},
\end{split}
\]
using that the left $B\#_{\sigma}H$-action of $\Omega^1(B\#_\sigma H)$ is right $H$-colinear and that $\phi$ is a coinvariant element. Thus, $c_{\phi}$ is a morphism in $_{B\#_{\sigma}H}\mm^{H}$ and, since $\mathrm{ver}$ is left $B\#_{\sigma}H$-linear, we also have
\[
\mathrm{ver}c_{\phi}(b\otimes h\otimes\gamma)=(b\otimes h)\cdot\sum_{j=1}^{n}{x_{j}(\gamma)\mathrm{ver}(\phi_{j})}\overset{\eqref{eq:connection}}{=}(b\otimes h)\cdot(1_{B}\otimes1_{H}\otimes\gamma)=b\otimes h\otimes\gamma,
\]
so that $c_{\phi}$ is a connection.

It remains to prove that we have a bijection. If we start with a connection $c$ with corresponding connection 1-form $\phi_{c}=\sum_{j=1}^{n}{x_{j}\otimes c(1_{B}\otimes1_{H}\otimes x^{j})}$ and then we consider the connection $c_{\phi_{c}}$, we obtain
\[
c_{\phi_{c}}(b\otimes h\otimes\gamma)=\sum_{j=1}^{n}{x_{j}(\gamma)((b\otimes h)\cdot c(1_{B}\otimes1_{H}\otimes x^{j}))}=\sum_{j=1}^{n}{x_{j}(\gamma)c(b\otimes h\otimes x^{j})}=c(b\otimes h\otimes\gamma),
\]
i.e., $c_{\phi_{c}}=c$. On the other hand, if we start with a connection 1-form $\phi=\sum_{j=1}^{n}{x_{j}\otimes\phi_{j}}$ with corresponding connection $c_{\phi}$ and then we consider the connection 1-form $\phi_{c_{\phi}}=\sum_{j=1}^{n}{x_{j}\otimes c_{\phi}(1_{B}\otimes1_{H}\otimes x^{j})}$, we obtain $\phi_{c_{\phi}}=\phi$, since $c_{\phi}(1_{B}\otimes1_{H}\otimes x^{j})=\sum_{i=1}^{n}{x_{i}(x^{j})\phi_{i}}=\phi_{j}$ and thus the thesis follows.
\end{proof}

Note that connection 1-forms $\phi$ are in bijection with right $H$-comodule maps $\omega:{}^{\mathrm{co}H}\Omega^1(H)\to\Omega^1(B\#_{\sigma}H)$ which satisfy $\mathrm{ver}\circ\omega=1\otimes\mathrm{Id}$. The bijection assigns a connection 1-form $\phi=\sum_{j=1}^{n}{x_{j}\otimes\phi_{j}}$ to $\omega:\gamma\mapsto\sum_{j=1}^{n}{x_{j}(\gamma)\phi_{j}}$ and a map $\omega$ of the previous type to the connection 1-form $\sum_{j=1}^{n}{x_{j}\otimes\omega(x^{j})}$. Thus, Proposition \ref{prop:bijection} can also be recovered from \cite[Proposition 5.41]{BM}.

\section{Construction for pointed Hopf algebras}\label{Sec4}

We apply the construction of the FODC on $B\#_{\sigma}H$ developed in Theorem \ref{canFODC} to build examples of FODCi on pointed Hopf algebras. \medskip

\noindent\textbf{Pointed Hopf algebras as crossed products}. Recall that a coalgebra $C$ is called \textit{simple} if it is not 0 and it has no non-zero proper subcoalgebras and that $C$ is called \textit{pointed} if all its simple subcoalgebras are 1-dimensional. A bialgebra (Hopf algebra) is pointed if it is pointed as a coalgebra. Since the set of group-like elements is in 1-1 correspondence with the set of 1-dimensional subcoalgebras via $g\mapsto\Bbbk g$ (see e.g. \cite[Lemma 8.0.1 e)]{Sw}), $C$ is pointed if and only if the coradical $C_{0}$ of $C$, i.e., the subcoalgebra of $C$ given by the sum of all its simple subcoalgebras, is the group algebra $\Bbbk[G(C)]$. Among the classical examples of pointed Hopf algebras are group algebras, universal enveloping algebras and the $q$-deformations of the universal enveloping algebras of semisimple Lie algebras. It is known that pointed Hopf algebras can be seen as crossed products as we recall in the following.

\begin{proposition}[{\cite[Theorem 15.2.4]{Ra}}]\label{isopointed}
Every pointed Hopf algebra $H$ is isomorphic to $H_{1}\#_{\sigma}\Bbbk[G/N]$, where $H_{1}$ is the indecomposable component of $H$ containing $\Bbbk1_{H}$ and $\Bbbk[G/N]$ is the group Hopf algebra over $G/N$, with $G$ and $N$ the groups of group-like elements of $H$ and $H_{1}$, respectively.
\end{proposition}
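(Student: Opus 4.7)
The plan is to realize $H$ as a cleft extension of $H_{1}$ by $\Bbbk[G/N]$ and then invoke the correspondence between cleft extensions and crossed product algebras recalled in Section \ref{Sec2}. I would proceed in four steps.

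First, I would analyze the coalgebra decomposition. Since $H$ is pointed, its coradical is $H_{0}=\Bbbk[G]$, and the Taft--Wilson theorem together with the theory of indecomposable coalgebra components show that $H=\bigoplus_{C}C$ decomposes as a direct sum of its indecomposable coalgebra components. The component $H_{1}$ containing $1_{H}$ is a (pointed) Hopf subalgebra of $H$ because the product and antipode preserve indecomposable components containing $1_{H}$, and its set of group-likes is $N:=G(H_{1})=G\cap H_{1}$. Left multiplication by $G$ permutes the indecomposable components, with stabilizer of $H_{1}$ exactly $N$, from which one deduces that $N$ is a normal subgroup of $G$ (since the components containing $g$ and $g'g g'^{-1}$ coincide, because they are both equal to $g'H_{1}g'^{-1}=H_{1}$) and that the components of $H$ are indexed by $G/N$ via $gN\mapsto gH_{1}$.

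Second, I would upgrade this coalgebra decomposition to a right $\Bbbk[G/N]$-comodule algebra structure on $H$. The projection $\pi\colon H\to\Bbbk[G/N]$ sending $h\in gH_{1}$ to $\varepsilon(h)\overline{g}$ (where $\overline{g}:=gN$) is a well-defined Hopf algebra map, and $\rho:=(\mathrm{Id}\otimes\pi)\circ\Delta\colon H\to H\otimes\Bbbk[G/N]$ provides the coaction. Comparing the decomposition $H=\bigoplus_{gN\in G/N}gH_{1}$ with the isotypic decomposition induced by $\rho$, one checks that the coinvariants are exactly $H^{\mathrm{co}\,\Bbbk[G/N]}=H_{1}$.

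Third, I would construct the cleaving map. Fix a set-theoretic section $s\colon G/N\to G$ of the projection $G\twoheadrightarrow G/N$ with $s(\overline{1})=1_{H}$, and define
\[
j\colon\Bbbk[G/N]\to H,\qquad \overline{g}\mapsto s(\overline{g}).
\]
This is a right $\Bbbk[G/N]$-comodule map (since $\Delta(s(\overline{g}))=s(\overline{g})\otimes s(\overline{g})$ and $\pi(s(\overline{g}))=\overline{g}$) and it is convolution-invertible with inverse $\overline{g}\mapsto s(\overline{g})^{-1}$, the group inverse in $G\subseteq H$. Hence $H_{1}\subseteq H$ is a cleft extension with cleaving map $j$.

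Finally, applying the equivalence between cleft extensions and crossed products recalled after the definition of the crossed product algebra, the measure $\overline{g}\cdot b:=j(\overline{g})bj^{-1}(\overline{g})=s(\overline{g})bs(\overline{g})^{-1}$ and the $2$-cocycle $\sigma(\overline{g}\otimes\overline{g}'):=s(\overline{g})s(\overline{g}')s(\overline{g}\,\overline{g}')^{-1}\in N\subseteq H_{1}$ produce the desired isomorphism $H\cong H_{1}\#_{\sigma}\Bbbk[G/N]$ of right $\Bbbk[G/N]$-comodule algebras via $\theta(h)=h_{1}j^{-1}(\pi(h_{2}))\otimes\pi(h_{3})$. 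The main obstacle is establishing the structural inputs of the first step, namely that $H_{1}$ is a Hopf subalgebra and that $N$ is normal in $G$; once these are in hand, everything else is a straightforward application of the cleft extension machinery from Section \ref{Sec2}. The $2$-cocycle $\sigma$ is typically non-trivial (as $s$ need not be a group homomorphism), which justifies the full crossed product (rather than smash product) framework.
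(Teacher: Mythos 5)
Your proposal is correct and follows essentially the same route as the paper's treatment of this statement: the paper gives no independent proof, citing \cite[Theorem 15.2.4]{Ra}, and the details it records immediately after the proposition are exactly your construction with the section $s$ given by the choice of coset representatives $g_{i}$ (so the measure is conjugation $\overline{g_{i}}\cdot a=g_{i}ag_{i}^{-1}$, your cocycle $\sigma(\overline{g_{i}}\otimes\overline{g_{j}})=s(\overline{g_{i}})s(\overline{g_{j}})s(\overline{g_{i}g_{j}})^{-1}$ is the paper's $n_{i,j}$ determined by $g_{i}g_{j}=n_{i,j}g_{l}$, and your $\theta$ reduces to the paper's isomorphism $f(ag_{i})=a\otimes\overline{g_{i}}$). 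The structural inputs you rightly flag as the main obstacle --- that the indecomposable component $H_{1}$ is a Hopf subalgebra, that every component contains a group-like and equals some $gH_{1}$, and that $N$ is normal in $G$ --- are precisely the content of the cited theorem, and your cleft-extension packaging of the remaining steps agrees with the correspondence between cleft extensions and crossed products recalled in Section \ref{Sec2}.
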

Let us recall some details of the previous result. Given $\{g_{i}\}_{i\in\mathcal{I}}$ a complete set of representatives for the left cosets of $N$ in $G$, we have that $H=\bigoplus_{i\in\mathcal{I}}{H_{g_{i}}}=\bigoplus_{i\in\mathcal{I}}{H_{1}g_{i}}$, see \cite[Proposition 15.2.3]{Ra}. Let $\bar{g_{i}}=g_{i}N$ for all $i\in\mathcal{I}$. The measure $\cdot:\Bbbk[G/N]\otimes H_{1}\to H_{1}$ is given by $\bar{g_{i}}\cdot a:=g_{i}ag_{i}^{-1}$, while the 2-cocycle $\sigma:\Bbbk[G/N]\otimes\Bbbk[G/N]\to H_{1}$ is defined in the following way. Given $i,j\in\mathcal{I}$, then $\bar{g_{i}}\bar{g_{j}}=\bar{g_{l}}$ for a unique $l\in\mathcal{I}$. Hence, there is a unique $n_{i,j}\in N\subseteq H_{1}$ such that $g_{i}g_{j}=n_{i,j}g_{l}$. One then defines $\sigma$ by $\sigma(\bar{g_{i}}\otimes\bar{g_{j}}):=n_{i,j}$. Moreover, the isomorphism of Hopf algebras $f:H\to H_{1}\#_{\sigma}\Bbbk[G/N]$ is given by $f(ag_{i})=a\otimes \bar{g_{i}}$ for all $a\in H_{1}$ and $i\in\mathcal{I}$, where $H_{1}\#_{\sigma}\Bbbk[G/N]$ has the tensor product coalgebra structure.\\

\noindent\textbf{Differential calculus on pointed Hopf algebras.}
Let $H\cong H_1\#_\sigma\Bbbk[G/N]$ be a pointed Hopf algebra with notation as before.
In the following we build
a right $\Bbbk[G/N]$-covariant calculus on $H$. 
Starting from a $\sigma$-twisted $\Bbbk[G/N]$-module FODC on $H_{1}$ and a bicovariant FODC on $\Bbbk[G/N]$, we then use the construction of Theorem \ref{canFODC}. The bicovariant FODCi on $\Bbbk[G/N]$ are described by the classification theorem of Woronowicz (\cite[Theorem 1.8]{Wo}). In particular, we know that every bicovariant FODC $(\Omega^1(\Bbbk[G/N]),\mathrm{d}_{\Bbbk[G/N]})$ on $\Bbbk[G/N]$ is such that $\Omega^1(\Bbbk[G/N]):=\Bbbk[G/N]\otimes(\Bbbk[G/N]^{+}/I)$, where $I\subseteq\Bbbk[G/N]^{+}$ is a right ideal of $\Bbbk[G/N]$ and $\mathrm{d}_{\Bbbk[G/N]}\bar{g_{i}}=(\mathrm{Id}\otimes\pi)(\bar{g_{i}}\otimes\bar{g_{i}}-\bar{g_{i}}\otimes\bar{1})=\bar{g_{i}}\otimes[\bar{g_{i}}-\bar{1}]$, where $\pi:\Bbbk[G/N]^{+}\to\Bbbk[G/N]^{+}/I$, $\alpha\mapsto[\alpha]$ is the quotient map. Note that the condition $\mathrm{Ad}_{R}(I)\subseteq I\otimes\Bbbk[G/N]$ is automatically satisfied. Observe also that $\Bbbk[G/N]^{+}$ consists exactly of those elements $\sum_{i\in\mathcal{I}}{k_{i}\bar{g_{i}}}$ in $\Bbbk[G/N]$ such that $\sum_{i\in\mathcal{I}}{k_{i}}=0$, that we denote by $k_{i}\bar{g_{i}}$, summation understood. Left and right $\Bbbk[G/N]$-actions are defined as
\[
\bar{g_{l}}\cdot(\bar{g_{j}}\otimes[k_{i}\bar{g_{i}}]):=\bar{g_{l}}\bar{g_{j}}\otimes[k_{i}\bar{g_{i}}]\qquad \mathrm{and}\qquad (\bar{g_{j}}\otimes[k_{i}\bar{g_{i}}])\cdot\bar{g_{l}}:=\bar{g_{j}}\bar{g_{l}}\otimes[k_{i}\bar{g_{i}}\bar{g_{l}}],
\]
while left and right $\Bbbk[G/N]$-coactions are given by
\[
\lambda(\bar{g_{j}}\otimes[k_{i}\bar{g_{i}}]):=\bar{g_{j}}\otimes\bar{g_{j}}\otimes[k_{i}\bar{g_{i}}]\qquad \mathrm{and}\qquad \rho(\bar{g_{j}}\otimes[k_{i}\bar{g_{i}}]):=\bar{g_{j}}\otimes[k_{i}\bar{g_{i}}]\otimes\bar{g_{j}}.
\]
We also need a $\sigma$-twisted $\Bbbk[G/N]$-module FODC on $H_{1}$, i.e., a FODC $(\Omega^1(H_{1}),\mathrm{d}_{H_{1}})$ such that there exists a linear map $\cdot:\Bbbk[G/N]\otimes\Omega^1(H_{1})\to\Omega^1(H_{1})$ which satisfies \eqref{comp.}, \eqref{H-lin} and \eqref{dsigma} of Lemma \ref{twistedHmodcal}. In particular, from \eqref{comp.} and \eqref{H-lin} we know that this linear map, if it exists, is uniquely determined by
\begin{equation}\label{act}
\bar{g_{i}}\cdot(a\mathrm{d}_{H_{1}}a')=(\bar{g_{i}}\cdot a)\mathrm{d}_{H_{1}}(\bar{g_{i}}\cdot a')=g_{i}ag_{i}^{-1}\mathrm{d}_{H_{1}}(g_{i}a'g_{i}^{-1})
\end{equation}
for every $a,a'\in H_{1}$ and $\bar{g_{i}}\in\Bbbk[G/N]$. Thus, one has to prove that \eqref{act} is well-defined. Furthermore, we need $\mathrm{d}_{H_{1}}(n_{i,j})=0$ for every $n_{i,j}\in N$ and $i,j\in\mathcal{I}$ in order to satisfy \eqref{dsigma}. 
In particular, \eqref{dsigma} holds true if $\mathrm{d}_{H_{1}}|_{N}=0$. 
We summarize the previous discussion in the following statement.
\begin{proposition}\label{prop:pointcalc}
Given a pointed Hopf algebra $H\cong H_1\#_\sigma\Bbbk[G/N]$, a right ideal $I\subseteq\Bbbk[G/N]^+$ and a $\sigma$-twisted $\Bbbk[G/N]$-module FODC $(\Omega^1(H_1),\mathrm{d}_{H_1})$ on $H_{1}$ we obtain a right $\Bbbk[G/N]$-covariant FODC $(\Omega^1(H),\mathrm{d}_H)$ on $H$.
\end{proposition}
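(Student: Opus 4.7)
The plan is to assemble the two ingredients required by Theorem \ref{canFODC} and then transport the resulting FODC along the isomorphism $f \colon H \xrightarrow{\sim} H_1 \#_\sigma \Bbbk[G/N]$ of Proposition \ref{isopointed}.

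First, I would use the Woronowicz classification (recalled in Section \ref{Sec2}) to promote the right ideal $I \subseteq \Bbbk[G/N]^+$ to a bicovariant FODC $(\Omega^1(\Bbbk[G/N]), \mathrm{d}_{\Bbbk[G/N]})$ on the group Hopf algebra. Concretely, $\Omega^1(\Bbbk[G/N]) = \Bbbk[G/N] \otimes (\Bbbk[G/N]^+/I)$ with $\mathrm{d}_{\Bbbk[G/N]}\bar{g_i} = \bar{g_i} \otimes [\bar{g_i} - \bar{1}]$, and since $\Bbbk[G/N]$ is cocommutative the adjoint condition $\mathrm{Ad}_L(I) \subseteq \Bbbk[G/N] \otimes I$ is automatic, so the FODC is indeed bicovariant. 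The second ingredient, a $\sigma$-twisted $\Bbbk[G/N]$-module FODC $(\Omega^1(H_1), \mathrm{d}_{H_1})$ on $H_1$ with measure given by \eqref{act}, is precisely the datum assumed in the hypothesis.

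Next, I would apply Theorem \ref{canFODC} to the $\sigma$-twisted left $\Bbbk[G/N]$-module algebra $H_1$ together with these two FODCi, obtaining a right $\Bbbk[G/N]$-covariant FODC $(\Omega^1(H_1 \#_\sigma \Bbbk[G/N]), \mathrm{d}_{\#_\sigma})$ on the crossed product. Finally, I would pull this calculus back along $f$: take $\Omega^1(H) := \Omega^1(H_1 \#_\sigma \Bbbk[G/N])$ with $H$-bimodule structure induced by $f$, set $\mathrm{d}_H := \mathrm{d}_{\#_\sigma} \circ f$, and inherit the right $\Bbbk[G/N]$-coaction from the target. Since $f$ is in particular a morphism of right $\Bbbk[G/N]$-comodule algebras, right $\Bbbk[G/N]$-covariance is preserved under this transport.

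There is no serious obstacle here: the statement is essentially a direct corollary of Theorem \ref{canFODC} combined with the Woronowicz classification, and the paragraphs preceding the proposition have already carried out the bookkeeping needed to present the hypotheses in the correct form. The only mild point worth highlighting is that one must verify in concrete examples that \eqref{act} gives a well-defined action on $\Omega^1(H_1)$ and that $\mathrm{d}_{H_1}(n_{i,j}) = 0$ for all the $2$-cocycle values $n_{i,j} \in N$; both conditions are packaged into the $\sigma$-twisted $\Bbbk[G/N]$-module FODC assumption, but in practice exhibiting a non-trivial $(\Omega^1(H_1), \mathrm{d}_{H_1})$ is the content that makes the construction genuinely substantial, as illustrated by the Radford Hopf algebra examples to follow.
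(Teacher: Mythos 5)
Your proposal is correct and matches the paper's argument, which is exactly the discussion preceding the proposition: promote $I$ to a bicovariant FODC on $\Bbbk[G/N]$ via the Woronowicz classification (with the adjoint condition automatic for the group algebra), feed it together with the assumed $\sigma$-twisted $\Bbbk[G/N]$-module FODC on $H_1$ into Theorem \ref{canFODC}, and transport the resulting right $\Bbbk[G/N]$-covariant calculus along the comodule-algebra isomorphism $H\cong H_1\#_\sigma\Bbbk[G/N]$. Your closing remark that the substantive work in examples is verifying well-definedness of \eqref{act} and $\mathrm{d}_{H_1}(n_{i,j})=0$ likewise agrees with how the paper proceeds for $H_{(r,n,q)}$.
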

Observe that the above calculus on $H$ is not right $H$-covariant in general and thus it does not fit the classification theorem of Woronowicz.

Let us briefly analyze some special cases of the construction of Proposition~\ref{prop:pointcalc}, considering cocommutative pointed Hopf algebras, for which the 2-cocycle is trivial.

\begin{remark}

\begin{enumerate}
    
\item[i.)] If $H$ is a cocommutative pointed Hopf algebra then $N=\{1\}$ (see \cite[Proposition 15.3.1]{Ra}) and so $G/N=G$. Hence, the measure becomes $g\cdot a=gag^{-1}$ for all $g\in G$, $a\in H_{1}$ and the 2-cocyle is such that $\sigma(g\otimes g')=1_{H}$ for all $g,g'\in G$. As a consequence, $H$ is isomorphic to the smash product $H_{1}\#\Bbbk G$ as it is said in \cite[Theorem 15.3.2]{Ra}. In this case, given a right ideal $I\subseteq\Bbbk[G]^{+}$ and a $\Bbbk[G]$-module FODC on $H_{1}$, by Proposition \ref{prop:pointcalc} we obtain a right $\Bbbk[G]$-covariant FODC on $H$.
    
\item[ii.)] Two special cases of i.) are the group Hopf algebra $\Bbbk[\tilde{G}]$ for a group $\tilde{G}$ and the universal enveloping algebra $U(\mathfrak{g})$ for a Lie algebra $\mathfrak{g}$; for these two examples, the isomorphism with the smash product is trivial. Indeed, $(\Bbbk[\tilde{G}])_{1}=\Bbbk1_{\tilde{G}}$ since $\Bbbk[\tilde{G}]=\bigoplus_{g\in\tilde{G}}\Bbbk g$ as coalgebras and $G(\Bbbk[\tilde{G}])=\tilde{G}$, while $G(U(\mathfrak{g}))=1$ and $U(\mathfrak{g})_{1}=U(\mathfrak{g})$, since $U(\mathfrak{g})$ is irreducible (see \cite[Corollary 7.6.5]{Ra}) and the irreducible components coincide with the indecomposable ones in the cocommutative case (see \cite[Proposition 15.3.1]{Ra}). In the first case the right $\Bbbk[\tilde{G}]$-covariant calculus on $\Bbbk[\tilde{G}]$ is simply the starting bicovariant FODC on $\Bbbk[\tilde{G}]$, while in the second one the right covariant FODC on $U(\mathfrak{g})$ is simply the starting calculus on $U(\mathfrak{g})$ where $\Omega^1(U(\mathfrak{g}))$ is equipped with trivial coaction. 
\end{enumerate}
\end{remark}

Now we consider a class of examples of pointed Hopf algebras not cocommutative in order to preserve the pure crossed product in the isomorphism and we build the corresponding right covariant FODC explicitly. \\ 

\noindent\textbf{The pointed Hopf algebras $H_{(r,n,q)}$}. Consider the pointed Hopf algebras $H_{(r,n,q)}$ given in \cite[page 477]{Ra} (which were introduced in \cite{Ra2}). Thus, let $r,n\geq 1$ and $M=rn$ and suppose that $\Bbbk$ has a primitive $M^{\mathrm{th}}$ root of unity $q$. Let $H=H_{(r,n,q)}$ be the Hopf algebra over $\Bbbk$, sometimes referred to as \textit{Radford Hopf algebra}, described as follows. As an algebra $H$ is generated by $a$ and $x$ subject to the relations
\[
a^{M}=1,\ \ x^{n}=0,\ \ xa=qax 
\]
and the coalgebra structure of $H$ is given by
\[
\Delta(a)=a\otimes a,\ \ \Delta(x)=1\otimes x+x\otimes a^{r}.
\]
Using the previous notation, $G=(a)$ and $N=(a^{r})$, so that $G/N=\{\overline{a^{i}}\ |\ i=0,\ldots,r-1\}\cong C_{r}$, where $C_{r}$ is the cyclic group with $r$ elements. Hence, $H_{(r,n,q)}$ has $r$ distinct indecomposable components: $H_{1},H_{1}a,\ldots,H_{1}a^{r-1}$. Observe that, considering $r=1$, one obtains the Taft algebra, which is indecomposable and has $G=N=(a)$. We know that $H_{(r,n,q)}$ has a basis given by $\{a^{l}x^{m}\ |\ 0\leq l<M,0\leq m<n\}$, so that $H_{(r,n,q)}=\bigoplus_{0\leq l<M, 0\leq m<n}{\Bbbk a^{l}x^{m}}$ as vector spaces. Thus, it is not difficult to deduce that 
\[
H_{1}=\bigoplus_{0\leq m<n}{\Bbbk x^{m}}\oplus\bigoplus_{0\leq m<n}{\Bbbk a^{r}x^{m}}\oplus\cdot\cdot\cdot\oplus\bigoplus_{0\leq m<n}{\Bbbk a^{r(n-1)}x^{m}}
\]
and we know that $H\cong H_{1}\#_{\sigma}\Bbbk[C_{r}]$ by Proposition \ref{isopointed}. So, an element $\chi\in H_{1}$ is of the form $\chi=\sum_{0\leq m,l<n}{\alpha_{l,m}a^{lr}x^{m}}$, with $\alpha_{l,m}\in\Bbbk$ and then the measure on $H_{1}$ is given, for all $i\in\{0,\ldots,r-1\}$, by 
\[
\overline{a^{i}}\cdot\chi=a^{i}\chi a^{M-i}=\sum_{0\leq m,l<n}{\alpha_{l,m}a^{i}a^{lr}x^{m}a^{M-i}}=\sum_{0\leq m,l<n}{\alpha_{l,m}q^{m(M-i)}a^{i}a^{lr}a^{M-i}x^{m}}=\sum_{0\leq m,l<n}{\alpha_{l,m}q^{m(M-i)}a^{lr}x^{m}},
\]
thus it simply changes the coefficients of $\chi$. Consider a FODC $(\Omega^1(H_{1}),\mathrm{d}_{H_{1}})$ on $H_{1}$ and two elements $a^{lr}x^{m}$ and $a^{kr}x^{s}$ of the basis of $H_{1}$, for indexes $0\leq l,k,m,s<n$. Then, $a^{lr}x^{m}\mathrm{d}_{H_{1}}(a^{kr}x^{s})\in\Omega^1(H_{1})$ and we obtain
\[
\begin{split}
\overline{a^{i}}\cdot(a^{lr}x^{m}\mathrm{d}_{H_{1}}(a^{kr}x^{s}))&=a^{i}a^{lr}x^{m}a^{M-i}\mathrm{d}_{H_{1}}(a^{i}a^{kr}x^{s}a^{M-i})=q^{m(M-i)}a^{lr}x^{m}\mathrm{d}_{H_{1}}(q^{s(M-i)}a^{kr}x^{s})\\&=q^{(M-i)(m+s)}a^{lr}x^{m}\mathrm{d}_{H_{1}}(a^{kr}x^{s}).
\end{split}
\]
This shows that \eqref{act} is well-defined for every FODC on $H_{1}$. Furthermore, note that for $l,k\in\{0,\ldots,r-1\}$ we have $0\leq l+k\leq 2r-2$. Thus, if $0\leq l+k\leq r-1$ we have $a^{l}a^{k}=1a^{l+k}$ for unique $1\in N$, while if $r\leq l+k\leq 2r-2$ we have $a^{l}a^{k}=a^{r}a^{l+k-r}$ with $0\leq l+k-r\leq r-2$, for unique $a^{r}\in N$. 
Thus, $\sigma:\Bbbk[G/N]\otimes\Bbbk[G/N]\to (a^{r})\subseteq H_{1}$ is defined by
\[
\sigma(\overline{a^{l}}\otimes\overline{a^{k}})=\begin{cases}
    1&\text{if}\ \ 0\leq l+k\leq r-1\\
    a^{r}&\text{if}\ \ r\leq l+k\leq 2r-2
\end{cases}
\]
and
this implies that, in order to satisfy \eqref{dsigma}, we have to require $\mathrm{d}_{H_{1}}(a^{r})=0$. So, if the FODC on $H_{1}$ satisfies $\mathrm{d}_{H_{1}}(a^{r})=0$, assuming that $\mathrm{d}_{H_{1}}(a^{kr})=0$ for $k$ an arbitrary positive integer, we obtain
\[
\mathrm{d}_{H_{1}}(a^{(k+1)r})=\mathrm{d}_{H_{1}}(a^{kr}a^{r})=a^{kr}\mathrm{d}_{H_{1}}(a^{r})+\mathrm{d}_{H_{1}}(a^{kr})a^{r}=0
\]
so that, by induction, $\mathrm{d}_{H_{1}}(N)=0$. For a generic element $\chi=\sum_{0\leq m,l<n}{\alpha_{l,m}a^{lr}x^{m}}\in H_{1}$ this gives
\begin{equation}\label{H(r,n,q)FODC}
\mathrm{d}_{H_{1}}(\chi)
=\sum_{0\leq m,l<n}{\alpha_{l,m}a^{lr}\mathrm{d}_{H_{1}}(x^{m})}+\sum_{0\leq m,l<n}{\alpha_{l,m}\mathrm{d}_{H_{1}}(a^{lr})x^{m}}=\sum_{0\leq m,l<n}{\alpha_{l,m}a^{lr}\mathrm{d}_{H_{1}}(x^{m})}.
\end{equation}

Thus, we have obtained the following result:
\begin{proposition}
    Given the pointed Hopf algebra $H_{(r,n,q)}\cong(H_{(r,n,q)})_{1}\#_{\sigma}\Bbbk[C_{r}]$, a right ideal $I\subseteq\Bbbk[C_{r}]^{+}$ and a FODC on $(H_{(r,n,q)})_{1}$ which satisfies \eqref{H(r,n,q)FODC} we obtain a right $\Bbbk[C_{r}]$-covariant FODC on $H_{(r,n,q)}$. 
\end{proposition}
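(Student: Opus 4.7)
The plan is to invoke Proposition \ref{prop:pointcalc} with $H = H_{(r,n,q)}$, $H_{1} = (H_{(r,n,q)})_{1}$, and $G/N = C_{r}$. The crossed product decomposition, together with the explicit form of the measure $\overline{a^{i}} \cdot \chi = a^{i} \chi a^{M-i}$ and of the $2$-cocycle $\sigma$ taking values in $\{1, a^{r}\} \subseteq N$, follows from Proposition \ref{isopointed} specialized to the Radford Hopf algebras. Since $C_{r}$ is abelian, every right ideal $I \subseteq \Bbbk[C_{r}]^{+}$ automatically satisfies the $\mathrm{Ad}$-stability needed by the Woronowicz classification recalled in the preliminaries, hence it determines a bicovariant FODC on $\Bbbk[C_{r}]$.

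The substantive step is to verify that $(\Omega^1(H_{1}), \mathrm{d}_{H_{1}})$ is a $\sigma$-twisted $\Bbbk[C_{r}]$-module FODC in the sense of Definition \ref{HmodFODC}. The prescribed action $\overline{a^{i}} \cdot (\chi\,\mathrm{d}_{H_{1}} \chi') := (\overline{a^{i}} \cdot \chi)\,\mathrm{d}_{H_{1}}(\overline{a^{i}} \cdot \chi')$, forced by \eqref{comp.} and \eqref{H-lin}, is well-defined: on a typical basis element $a^{lr} x^{m}\,\mathrm{d}_{H_{1}}(a^{kr} x^{s})$ the conjugation by $a^{i}$ produces the scalar factor $q^{(M-i)(m+s)}$ already computed in the paragraph before the statement. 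Because this scalar depends only on the combined $x$-degree $m+s$ and not on $l$ or $k$, the action passes to any quotient of the tensor product defining $\Omega^1(H_{1})$, so it extends unambiguously to the whole of $\Omega^1(H_{1})$. With the action in place, \eqref{comp.} and \eqref{H-lin} hold by its very definition.

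For the remaining condition \eqref{dsigma} I would observe that $\sigma$ takes values in $\{1, a^{r}\}$, so $\mathrm{d}_{H_{1}} \circ \sigma = 0$ reduces to $\mathrm{d}_{H_{1}}(a^{r}) = 0$. Specializing the hypothesis \eqref{H(r,n,q)FODC}, which asserts $\mathrm{d}_{H_{1}}(\chi) = \sum_{l,m} \alpha_{l,m}\, a^{lr}\,\mathrm{d}_{H_{1}}(x^{m})$ for $\chi = \sum_{l,m} \alpha_{l,m}\, a^{lr} x^{m}$, to the element $\chi = a^{r}$ (where only $\alpha_{1,0} = 1$ is nonzero) gives $\mathrm{d}_{H_{1}}(a^{r}) = a^{r}\,\mathrm{d}_{H_{1}}(1) = 0$ at once; iterating the Leibniz rule then yields $\mathrm{d}_{H_{1}}(a^{kr}) = 0$ for every $k$, which is all that \eqref{dsigma} requires since $\sigma$ lands in $(a^{r})$.

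With the three conditions of Definition \ref{HmodFODC} confirmed, Proposition \ref{prop:pointcalc} applies and produces the desired right $\Bbbk[C_{r}]$-covariant FODC on $H_{(r,n,q)}$ by the crossed product construction of Theorem \ref{canFODC}. I do not foresee any genuine obstacle: the statement is essentially a packaging of the explicit computations already performed in the preceding discussion, with hypothesis \eqref{H(r,n,q)FODC} crafted precisely so as to encode $\mathrm{d}_{H_{1}}|_{N} = 0$, which is the one non-automatic compatibility demanded by the general construction.
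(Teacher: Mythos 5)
Your proposal is correct and takes essentially the same approach as the paper: the discussion preceding the proposition in Section \ref{Sec4} \emph{is} the paper's proof, with the identical computation of the scalar $q^{(M-i)(m+s)}$ establishing the well-definedness of \eqref{act}, the same description of $\sigma$ taking values in $\{1,a^{r}\}$, and the same final appeal to Proposition \ref{prop:pointcalc} (hence Theorem \ref{canFODC}). Your explicit check that \eqref{H(r,n,q)FODC} forces $\mathrm{d}_{H_{1}}(a^{r})=0$ (evaluating at $\chi=a^{r}$ and using $\mathrm{d}_{H_{1}}(1)=0$), and hence $\mathrm{d}_{H_{1}}\circ\sigma=0$, makes explicit the one implication the paper only derives in the converse direction, so on that point your write-up is marginally more complete than the original.
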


Let us consider a more specific example in order to show all the details of the right covariant FODC.

\begin{example}
Let $r=2$ and $n=2$. The generators $a$ and $x$ are subject to the relations
\[
a^{4}=1,\ x^{2}=0,\ xa=qax,
\]
where $q$ is a primitive fourth root of unity. We have two indecomposable components, i.e., $H_{1}$ and $H_{1}a$, where 
\[
H_{1}=\bigoplus_{0\leq m<2}{\Bbbk x^{m}}\oplus\bigoplus_{0\leq m<2}{\Bbbk a^{2}x^{m}}=\Bbbk1\oplus\Bbbk x\oplus\Bbbk a^{2}\oplus\Bbbk a^{2}x.
\]
Therefore, for $H=H_{(2,2,q)}$ an element $\chi\in H_{1}$ is of the form $\chi=\alpha1+\beta x+\gamma a^{2}+\delta a^{2}x$ for $\alpha,\beta,\gamma,\delta\in\Bbbk$ and $G/N=\{\bar{1},\bar{a}\}\cong C_{2}$. Thus, $\sigma:\Bbbk[C_{2}]\otimes\Bbbk[C_{2}]\to(a^{2})\subseteq H_{1}$ is defined by
\[
\sigma(\overline{a^{l}}\otimes\overline{a^{k}})=\begin{cases}
    1&\text{if}\ \ l=k=0,\ \ l=0,k=1,\ \ l=1,k=0\\
    a^{2}&\text{if}\ \ l=k=1
\end{cases}
\]
and
the $\sigma$-twisted $\Bbbk[C_{2}]$-module FODC on $H_{1}$ is determined by $\mathrm{d}_{H_{1}}x$, so that 
\[
\mathrm{d}_{H_{1}}(\chi)=\alpha\mathrm{d}_{H_{1}}1+\beta\mathrm{d_{H_{1}}}x+\gamma\mathrm{d}_{H_{1}}(a^{2})+\delta\mathrm{d}_{H_{1}}(a^{2}x)=\beta\mathrm{d}_{H_{1}}x+\delta a^{2}\mathrm{d}_{H_{1}}x.
\]
The only right ideals $I$ of $\Bbbk[C_{2}]$ which are contained in $\Bbbk[C_{2}]^{+}=\Bbbk(\bar{a}-\bar{1})$ are $I=\{0\}$ and $I=\Bbbk(\bar{a}-\bar{1})$.
Since the latter choice gives a trivial quotient we elaborate on the first one, which corresponds to the universal FODC.
For $I=\{0\}$ we have that $\Omega^1(\Bbbk[C_{2}]):=\Bbbk[ C_{2}]\otimes\Bbbk(\bar{a}-\bar{1})$ and $\mathrm{d}_{\Bbbk[ C_{2}]}\bar{a}=
\bar{a}\otimes(\bar{a}-\bar{1})$. Thus, $\mathrm{d}_{\Bbbk[C_{2}]}(e\bar{1}+f\bar{a})=f\bar{a}\otimes(\bar{a}-\bar{1})$, for $e,f\in\Bbbk$ and the right $\Bbbk[C_{2}]$-covariant FODC $(\Omega^1(H_{1}\#_{\sigma}\Bbbk[C_{2}]),\mathrm{d}_{\#_{\sigma}})$ on $H_{1}\#_{\sigma}\Bbbk[C_{2}]$, where $\Omega^1(H_{1}\#_{\sigma}\Bbbk[C_{2}])=(\Omega^1(H_{1})\otimes\Bbbk[C_{2}])\oplus(H_{1}\otimes\Bbbk[C_{2}]\otimes\Bbbk(\bar{a}-\bar{1}))$, is given by
\[
\begin{split}
\mathrm{d}_{\#_{\sigma}}(\chi\otimes(e\bar{1}+f\bar{a}))&=\mathrm{d}_{H_{1}}(\chi)\otimes(e\bar{1}+f\bar{a})+\chi\otimes\mathrm{d}_{\Bbbk[C_{2}]}(e\bar{1}+f\bar{a})\\&=(\beta\mathrm{d}_{H_{1}}x+\delta a^{2}\mathrm{d}_{H_{1}}x)\otimes(e\bar{1}+f\bar{a})+(\alpha1+\beta x+\gamma a^{2}+\delta a^{2}x)\otimes f\bar{a}\otimes(\bar{a}-\bar{1})\\&=\beta e\mathrm{d}_{H_{1}}x\otimes\bar{1}+\beta f\mathrm{d}_{H_{1}}x\otimes\bar{a}+\delta ea^{2}\mathrm{d}_{H_{1}}x\otimes\bar{1}+\delta fa^{2}\mathrm{d}_{H_{1}}x\otimes\bar{a}\\&+\alpha f1\otimes\bar{a}\otimes(\bar{a}-\bar{1})+\beta fx\otimes\bar{a}\otimes(\bar{a}-\bar{1})+\gamma fa^{2}\otimes\bar{a}\otimes(\bar{a}-\bar{1})+\delta fa^{2}x\otimes\bar{a}\otimes(\bar{a}-\bar{1})
\end{split}
\]
for all $\chi\otimes(e\bar{1}+f\bar{a})\in H_{1}\#_{\sigma}\Bbbk[C_{2}]$.
\end{example}

\subsection*{Acknowledgements}

We would like to thank P. Aschieri for valuable suggestions throughout all phases of this work. In particular, he conjectured the results of Section~\ref{Sec:ClasSmash}. We would also like to thank A. Ardizzoni and P.M. Hajac for helpful comments and the referee for meaningful suggestions. This paper was written while the first author was member of the “National Group for Algebraic and Geometric Structures and their Applications” (GNSAGA-INdAM).  He was partially supported by MIUR within the National Research Project PRIN 2017. The second author was partially supported by HORIZON-MSCA-2021-SE-01-101086123 CaLIGOLA and by INFN Sezione di Bologna (Gast).

\end{document}